\title[Local constancy of superdeterminants]{On the local constancy of regularized superdeterminants along special families of differential operators}
\author{Michele Schiavina}
\address{Department of Mathematics, University of Pavia, Via Ferrata 5, 27100 Pavia, Italy}
    \address{INFN Sezione di Pavia, via Bassi 6, 27100 Pavia, Italy}
    \email{michele.schiavina@unipv.it}
\author{Thomas Stucker}
\address{Department of Mathematics, ETH Zurich, R\"amistrasse 101, 8092, Z\"urich, Switzerland}
\email{thomas.stucker@math.ethz.ch}
\begin{document}

\begin{abstract}
    We consider the flat-regularized determinant of families of operators of the form $D_\tau=[\delta_\tau,d_\nabla]$, where $\tau\to\delta_\tau$ are families of degree $-1$ maps in the twisted de Rham complex $\left(\Omega^\bullet(M,E),d_\nabla\right)$ generalizing the (twisted) Hodge codifferential. We show that under suitable assumptions, both geometrical and analytical in nature, the flat-regularized determinant of $D_\tau$, restricted to the subspace $\im(\delta_\tau)$, is constant in $\tau$. The general result we present implies both local constancy of the Ray--Singer torsion and of the value at zero of the Ruelle zeta function for a contact Anosov flow, upon choosing $\delta_\tau = \delta_{g_\tau}$, the Hodge codifferential for a family of metrics, and $\delta_\tau=\iota_{X_\tau}$, the contraction along a family of (regular, contact) Anosov vector fields, respectively.
\end{abstract}

\maketitle

\tableofcontents

\section{Introduction}
Let $E\to M$ be a flat orthogonal vector bundle over a smooth manifold and $\tau_{R}(M,E)$ its associated Reidemeister torsion, a topological invariant \cite{Reidemeister}. It is a classic result of Cheeger, and independently M\"uller \cite{Cheeger77,Mueller78,Cheeger79,Mueller93} (see also \cite{BismutZhang}), that $\tau_{R}(M,E)$ can be computed by the analytic torsion $\tau_{A}(M,E)$ of Ray and Singer \cite{ray_singer}. A crucial aspect of this result is that, while to define the analytic torsion a choice of Riemannian metric on $M$ is required, the quantity $\tau_{A}(M,E)$ does not depend on that choice, as was already observed by Ray and Singer. This result can be rephrased by stating that the analytic torsion is a (locally) constant function on the space of metrics.

A decade later, Fried observed that certain features of geodesic behavior on hyperbolic manifolds are robust under deformations. He showed that the value at $\lambda=0$ of the dynamical, or Ruelle, zeta function\footnote{As introduced by Ruelle in \cite{ruelle}.} associated to the geodesic flow on $E\to M$ (lifted to the unit cotangent bundle $S^*M$, with $M$ hyperbolic) coincides with the analytic torsion of the underlying manifold \cite{fried}. This result spurred further interest in the properties of zeta functions for Anosov dynamical systems\footnote{The geodesic flow on a hyperbolic metric is a basic example of such (Anosov) dynamical systems, which are in turn all examples of ``Axiom A'' systems according to Smale's characterisation \cite{Smale_dynamical}, which exhibit chaotic behavior.}, and in particular their dependence on the choice of the underlying dynamical system itself, giving rise to what is now known as Fried's conjecture \cite{fried,FriedLefschetz}. In its ``strong'' form, this is the statement that the value at zero of Ruelle's zeta function for \emph{any} Anosov flow on a manifold $M$ (endowed with a flat orthogonal bundle $E$) returns the analytic, and thus Reidemeister, torsion, i.e.\ it is a topological invariant.

This conjecture has been the object of intense study, and a number of results have provided an answer in the positive for certain classes of manifolds and flows (see \cite{SurveyShen} for a survey), although a full proof is not yet available. Among these results we focus in particular on \cite{viet_dang_fried_conjecture}, where a local constancy statement is proven, showing that the value at zero of the Ruelle zeta function $\zeta_X(0)$, viewed as a function of the Anosov vector field, is constant in a neighborhood of a regular Anosov flow, in some sense mirroring what was proven about the analytic torsion by Ray and Singer. The similarity between the two local constancy results becomes even more striking by observing that both quantities can be thought of as regularized superdeterminants of appropriate differential operators on the space of $E$-valued differential forms, and their local constancy is formulated in terms of a smooth family of such operators (see also \cite{hadfield_kandel_schiavina}).

The relevant families of operators one considers in this context have a special form: in both cases, they are obtained by considering a smooth family of generalised ``codifferentials'', denoted $\delta_\tau$, for the twisted de Rham complex $(\Omega^\bullet(M,E),\d)$, and by building a \emph{characteristic operator} via the graded commutator: $D_\tau \doteq [\delta_\tau,\d]$. Indeed, in the case of the analytic torsion, one considers the codifferential for a smooth family of metrics $\delta_\tau = \d^{*_{g_\tau}}$, and in the Ruelle case one considers the vector field insertion of a smooth family of Anosov vector fields into differential forms $\delta_\tau = \iota_{X_\tau}$. More explicitly:
\begin{equation}
    \begin{cases}
        \delta_\tau=\d^{*_{g_\tau}} \leadsto D_\tau = [\d,\d^{*_{g_\tau}}] = \Delta_{g_\tau} & \text{analytic torsion case}\\
        \delta_\tau=\iota_{X_\tau} \leadsto D_\tau = [\d,\iota_{X_\tau}] = \L_{X_\tau} & \text{Ruelle zeta function case}
    \end{cases}
\end{equation}
and one has that, in the appropriate sense,\footnote{Here $\sdet$ denotes the flat regularised (super)deteminant of differential operators, in the sense of \cite{baladi}, extended to graded vector spaces.}
\begin{equation}
\begin{cases}
    T_{\rho,g_\tau}(M) = \sdet(\Delta_{g_\tau}|_{L_{g_\tau}})^{\frac{1}{2}}, & \frac{d}{d\tau}\sdet(\Delta_{g_\tau}|_{L_{g_\tau}}) = 0\\
    \zeta_{\rho,X_\tau}(0)= \sdet(\L_{X_\tau}|_{L_{X_\tau}})^{(-1)^{\frac{\mathrm{dim}(M)-1}{2}}}, & \frac{d}{d\tau}\sdet(\L_{X_\tau}|_{L_{X_\tau}}) = 0
\end{cases}
\end{equation}
where $L_{g_\tau}\doteq(\im(\d^{*_{g_\tau}}))$ and $L_{X_\tau}\doteq(\im(\iota_{X_\tau}))$ are vector subspaces of the space of differential forms with values in the flat Hemitian bundle $E\to M$ (Section \ref{section_geometric}), and $T_{\rho,g_\tau}$ computes the analytic torsion $\tau_A(M,E)$.

In this paper we show that there exists a larger class of general codifferentials (for the twisted de Rham complex) such that the associated characteristic operators have locally constant regularized superdeterminants (when considered for families with suitable regularity properties). Moreover, we show that both the codifferential $\d^{*_g}$ and the contraction operator along a regular contact Anosov vector field $\iota_X$ belong to this class, which gives us another proof of both the local constancy theorems of Ray and Singer as well as that of Dang, Guillarmou, Rivi\'ere and Shen, in the case of contact Anosov vector fields, as a byproduct (cf.\ \cite[Theorem 2]{viet_dang_fried_conjecture}). Namely, we prove:

\begin{maintheoremintro}
     Let $(-1,1)\mapsto \delta_\tau$ be a smooth family of degree $-1$ maps in $\Omega^\bullet(M,E)$ such that $\delta_\tau^2=0$ and $\delta_\tau$ is a regular general codifferential for all $\tau\in(-1,1)$ (Definitions \ref{def_general_codifferential} and \ref{def_codifferential_types}). Denote the associated characteristic operator by $D_\tau = [\delta_\tau,\d]$ and by $L_\tau \doteq \im(\delta_\tau)$. If $\delta_\tau$ is an inner variation of $\delta_{0}$ (Definition \ref{def_homotopy_structure}) and if the additional assumptions \ref{assumption_diff_semigroup}, \ref{assumption_diff_hormander}, \ref{assumption_smaller_wavefront}, \ref{assumption_G_convergencce} and \ref{assumption_analytic_continuation} are satisfied, then for all $\tau\in(-1,1)$ we have
     \[
     \sdet({D_\tau}|_{L_\tau}) = \sdet({D_0}|_{L_0}).
     \]
\end{maintheoremintro}

Which leads to 
\begin{maincor}\label{maincor_1}
    The analytic torsion for an acyclic twisted de Rham complex is locally constant, as a function on the space of metrics over $M$.
\end{maincor}

\begin{maincor}\label{maincor_2}
    The value at zero of the Ruelle zeta function for a regular contact Anosov vector field is locally constant, as a function on the space of regular contact Anosov vector fields.
\end{maincor}

The structure we employ to build paths of operators is inherited from, and inspired by, the mathematical physics literature, where the general codifferential plays the role of ``gauge fixing operator''. More generally, one looks for a Lagrangian subspace\footnote{This is defined as an isotropically complemented isotropic subspace of a vector space with a nondegenerate pairing.} of an appropriately defined space of fields. (These are often sections of graded vector bundles, and come equipped with a cohomology theory that encodes the physics of the problem.) In such scenario, a family of codifferentials is interpreted as a homotopy of ``gauge fixing Lagrangians'', and quantities of physical relevance are expected to be locally constant along such families. (A more careful formulation of this statement can be made into a theorem in the case of finite dimensional spaces of fields, see \cite{cattaneo-mnev-schiavina} and references therein). One quantity of particular interest is the partition function, which---in the case of a certain topological theory whose fields are differential forms---has been linked to both the analytic torsion and the value at zero of the Ruelle zeta function \cite{hadfield_kandel_schiavina,Schiavina_Stucker2} (see Section \ref{sec_Fried}, below). 

We conclude by mentioning that the assumptions we must impose for local constancy to hold have been found as natural requirements to make sense of the $\tau$ derivative of the regularized superdeterminant. It turns out that in special cases (or classes thereof) some of these assumptions are automatically satisfied as a byproduct of more general considerations on the nature of the characteristic operators (e.g.\ in the case of $D_\tau$ elliptic). Our work, however, aims to link operators that are potentially very different in nature, such as the second order elliptic Hodge Laplacian and the first order Lie derivative. Assuming that the characteristic operators $D_\tau$ belong to some class of differential operators requires a careful search for such a class---an attempt that would take our work closer to the theory of hypoelliptic Laplacians due to Bismut \cite{BismutSurvey}.\footnote{Note, in particular, that in \cite[Section 2]{BismutSurvey} some constructions inspired by Witten's Laplacian bring the two works close together. Naturally, Witten's work \cite{WittenMorse} is rooted---like ours---in the problem of gauge fixing of field theories. See \cite{Schiavina_Stucker2} for an overview and a link to the present work.}

\subsection{A remark concerning Fried's conjecture}\label{sec_Fried}
When $X$ is a smooth Anosov vector field on a compact manifold $M$ endowed with a Hermitian vector bundle $(E,\langle\cdot,\cdot\rangle_E,\nabla)\to M$ with flat connection (induced by a unitary representation $\rho$ of $\pi_1(M)$ on the fibers of $E$), it is conjectured \cite{fried,FriedLefschetz} that
\[
T_{\rho,g} = |\zeta_{\rho,X}(0)|^{(-1)^m},
\]
with $\dim(M)=2m+1$. Both sides of Fried's conjecture can be expressed in terms of superdeterminants of characteristic operators for various choices of general codifferentials for the twisted de Rham complex; indeed, we have seen that the analytic torsion is computed by
\[
T_{\rho,g}= \sdet(\Delta_g\vert_{L_g})^{\frac12}, \qquad L_g=\im(\delta_g)
\]
and the value at zero of Ruelle's zeta function reads
\[
\zeta_{\rho,X}(0)=\sdet(\L_X\vert_{L_X})^{(-1)^m}, \qquad L_X = \im(\iota_X).
\]
Note that a direct interpolation between the two quantities in terms of families of codifferentials, following our construction, should not be expected because of the mismatch in power of the superdeterminants involved. One can get a sense for the origin of this mismatch, by observing that one can reformulate both sides as
\begin{equation*}
\begin{split}
    T_{\rho,g} &= \sdet(\Delta_g\vert_{L_g})^{\frac12} =\frac{\sdet(\Delta_g\vert_{L_g})}{\sdet(\delta_g)} \doteq Z(\delta_g);\\
    \zeta_{\rho,X}(0)^{(-1)^m} &= \sdet(\L_X\vert_{L_X}) = \frac{\sdet(\L_X\vert_{L_X})}{\sdet(\iota_X)} \doteq Z(\delta_X)
\end{split}
\end{equation*}
where we defined $\sdet(\delta_g)\doteq\sdet(\delta_g\d\vert_{L_g})^{\frac12}=\sdet(\Delta_g\vert_{L_g})^{\frac12}$ and $\sdet(\iota_X)\doteq\sdet(\iota_X \circ \alpha\wedge)^{\frac12} = 1$. (The latter observation is justified by the fact that on the cosphere bundle over a Riemannian manifold one can define an ``adapted metric'' and an inner product on differential forms w.r.t.\ which $\iota_X$ and $\alpha\wedge$ are adjoints, see \cite[Section 2.3.1]{hadfield_kandel_schiavina}.)

Indeed, both the analytic torsion and the value at zero of the Ruelle zeta function have been interpreted as the partition function for a topological field theory, where the general codifferential plays the role of (a choice of) gauge-fixing operator, and the denominator ``$\sdet(\delta)$'' is interpreted as a Jacobian determinant factor \cite{hadfield_kandel_schiavina}. 

The following natural questions arise:
\begin{enumerate}
    \item Is there a family of general codifferentials $\delta_\tau$ interpolating between $\iota_X$ and $\delta_g$, with $X$ a (regular) Anosov vector field and $g$ a Riemannian metric on $M$, and a well-defined notion of $\sdet(\delta_\tau)$?
    \item Is the quantity 
    \[
    Z(\delta_\tau) = \frac{\sdet(D_\tau\vert_{L_\tau})}{\sdet(\delta_\tau)}
    \]
    locally constant along the interpolating family $\delta_\tau$?
\end{enumerate}
This suggests a path for a reformulation of Fried's conjecture, akin to the constructions presented in this paper. Our results do not directly apply to this question, but they imply local constancy of the functionals $Z(\delta_\tau)$ for the particular cases where $\delta_\tau$ is a family of Hodge codifferentials, or of contractions along regular Anosov vector fields, because in these cases the ``Jacobian factor'' $\sdet(\delta_\tau)$ can be handled more easily.

\medskip

\section{Geometric setup}
\label{section_geometric}
Let $M$ be an $n$-dimensional compact, connected, orientable manifold, and let $\pi\colon E\to M$ be a rank $r$ complex vector bundle over $M$. We further assume that $E$ is equipped with a smooth Hermitian inner product, which we denote by $\pair{\,\cdot\,}{\cdot\,}_E$, and a flat connection $\nabla$ compatible with the inner product. Denote by $\Omega^k(M,E)$ the space of smooth differential $k$-forms on $M$ with values in $E$. Since the connection is flat, the exterior covariant differential associated to $\nabla$,
\[ \d \colon \Omega^\bullet(M,E) \to \Omega^{\bullet+1}(M,E), \]
satisfies $\d\! \circ\, \d = 0$, and thus defines a cochain complex, called twisted de Rham complex, with cohomology groups $H^\bullet(M,E)$. We require this complex to be acyclic, i.e.\ $H^k(M,E) = 0$, for all $0\le k\le n$.

\begin{defn}[Twisted topological data]\label{def:TTD}
The data $(M,E,\nabla)$ is called \emph{twisted topological data}.
\end{defn}

\subsection{General codifferentials}\label{section_codifferentials}
On $\Omega^\bullet(M,E)$ there is a natural nondegenerate pairing 
\begin{equation}\label{e_pairing}
\int_M\langle \cdot,\cdot \rangle_E \colon \Omega^\bullet(M,E)\times \Omega^\bullet(M,E) \to \mathbb{C}.
\end{equation}
Here, for $\omega,\eta\in\Omega^\bullet(M,E)$ not necessarily homogeneous in degree, $\langle\omega\wedge\eta\rangle_E$ denotes taking the inner product in $E$ and the exterior product in $\wedge^\bullet T^*M$, and the integral is only of the top-form part of the resulting expression.

We introduce the notion of isotropic subspaces of $\Omega^\bullet(M,E)$, thought of as a graded vector space. Note that these will not be homogeneous in degree, in general.

\begin{defn}
    A subspace $I\subset \Omega^\bullet(M,E)$ is isotropic w.r.t.\ $\int_M\langle\cdot,\cdot\rangle_E$ iff
    \[
    \int_M\langle\omega,\eta\rangle_E \qquad \forall\omega,\eta\in I.
    \]
\end{defn}

\begin{defn}
\label{def_general_codifferential}
A general codifferential is a nilpotent differential operator
$$\delta: \Omega^\bullet(M,E) \to \Omega^{\bullet-1}(M,E),\qquad \delta^2 = 0,$$
which enjoys graded symmetry w.r.t.\ the pairing:\footnote{The sign depends on the degree of the homogeneous components.}
$$
\int_M\langle\omega\wedge \delta\eta\rangle_E = \pm \int_M\langle \delta\omega\wedge\eta\rangle_E \quad \mathrm{for}\quad \omega,\eta \in \Omega^\bullet(M,E),
$$
such that $L\doteq \im(\delta)$ admits a closed complement $C \subset \Omega^\bullet(M,E)$, i.e.\ 
\begin{equation}
\label{splitting}
    \Omega^\bullet(M,E) = L\oplus C.
\end{equation}
Given a general codifferential $\delta$ the \emph{characteristic operator (of the codifferential)} is the degree-preserving operator\footnote{This is the graded commutator of odd derivations of $\Omega^\bullet(M,E)$. Thus, $D=[\delta,\d]=\delta\d+\d \delta$.}
\[
D\doteq [\delta,\d] \colon \Omega^\bullet(M,E)\to \Omega^\bullet(M,E).
\]
Note that $D$ commutes with $\delta$ (and with $\d$), so that the subspcae $L=\im(\delta)$ is left invariant by $D$. We use the term \emph{restricted characteristic operator} for the restriction $D\vert_L$.
\end{defn}

We have the following:

\begin{lemma}
\label{inverse_Q_GF}
Let $\delta$ be a general codifferential. Then the subspace $L=\im(\delta)$ is isotropic. Moreover, let $C$ be a closed complement to $L$, i.e.  $\Omega^\bullet(M,E)=L\oplus C$, and consider the following statements: 
\begin{enumerate}
    \item $C$ is isotropic;
    \item the map
        \begin{equation}
        \label{gauge_fixing_isomorphism}
        \delta\vert_C: C \to L
        \end{equation}
        is bijective;
    \item the complex $(\Omega^\bullet(M,E),\delta)$ is acyclic.
\end{enumerate}
We have that 
\[
(1)\implies (2)\iff (3).
\]
\end{lemma}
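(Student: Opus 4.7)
My plan is to prove the three assertions in the order: isotropy of $L$, then the equivalence $(2)\iff(3)$, then the implication $(1)\implies(2)$.

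For the isotropy of $L=\im(\delta)$, I would take two elements $\delta\alpha,\delta\beta\in L$ and compute using graded symmetry of $\delta$ together with $\delta^2=0$, obtaining
\[
\int_M\langle \delta\alpha\wedge \delta\beta\rangle_E = \pm\int_M\langle \alpha\wedge \delta^2\beta\rangle_E = 0.
\]
This uses only the two defining properties of a general codifferential and requires no assumption on $C$.

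For $(2)\iff(3)$, the key observation I would exploit is that $\delta|_C$ is \emph{automatically} surjective onto $L$: since $\delta^2=0$ implies $\delta(L)=0$, we have $L=\im(\delta)=\delta(L\oplus C)=\delta(C)$. Hence bijectivity of $\delta|_C$ reduces to its injectivity, i.e. to $\ker(\delta)\cap C=\{0\}$. Combined with $L\subseteq\ker(\delta)$ and the decomposition $\Omega^\bullet(M,E)=L\oplus C$, this is equivalent to $\ker(\delta)=L$, which is exactly the acyclicity of $(\Omega^\bullet(M,E),\delta)$.

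For the implication $(1)\implies(2)$, I would assume $C$ is isotropic and take $c\in C$ with $\delta c=0$, aiming to conclude $c=0$ by the nondegeneracy of the pairing \eqref{e_pairing}. For any $\omega\in\Omega^\bullet(M,E)$, graded symmetry gives
\[
\int_M\langle c\wedge \delta\omega\rangle_E = \pm\int_M\langle \delta c\wedge \omega\rangle_E = 0,
\]
so $c$ pairs trivially with every element of $L=\im(\delta)$. Since $C$ is isotropic by assumption, $c$ also pairs trivially with every element of $C$. Using the decomposition $\Omega^\bullet(M,E)=L\oplus C$, $c$ is thus in the kernel of the pairing, hence $c=0$; this shows $\delta|_C$ is injective, and combined with the automatic surjectivity established above, it is bijective.

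I do not expect any step to pose a real obstacle: the argument is essentially linear-algebraic, and the only subtlety is the bookkeeping of signs from the graded symmetry, which does not affect the vanishing conclusions. The most delicate conceptual point is the observation that surjectivity of $\delta|_C$ is free, which collapses $(2)$ to an injectivity statement and makes the equivalences transparent.
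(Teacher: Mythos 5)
Your proof is correct and follows essentially the same route as the paper's: isotropy of $L$ via graded symmetry and $\delta^2=0$, the observation that $\delta|_C$ is automatically surjective (the paper records this as $L=\delta(\Omega^\bullet(M,E))=\delta(\im(\delta)\oplus C)=\delta(C)$), injectivity via nondegeneracy of the pairing for $(1)\implies(2)$, and the identification of injectivity of $\delta|_C$ with $\ker(\delta)=L$ for $(2)\iff(3)$. The only difference is cosmetic---you front-load the ``surjectivity is free'' observation and prove $(2)\iff(3)$ before $(1)\implies(2)$, whereas the paper proves the implications in the listed order, but the underlying argument is identical.
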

\begin{proof}
We first check that $L\doteq \im(\delta)$ is isotropic in $\left(\Omega^\bullet(M,E),\int_M\langle\cdot,\cdot\rangle_E\right)$. Indeed,
\[
\int_M\langle \omega,\eta\rangle_E = \int_M\langle\delta \tau,\delta \beta\rangle = \pm \int_M \langle \tau,\delta^2\beta\rangle = 0, \qquad \forall \omega,\eta\in L.
\]

Let us prove $(1)\implies (2)$. Consider $\omega \in \ker(\delta) \cap C$, then by isotropicity of $C$ we have
\[
\int_M \langle\omega\wedge\eta\rangle_E = 0, \quad \forall \eta \in C
\]
as well as
\[
\int_M \langle\omega\wedge\eta\rangle_E = \int_M \langle\omega\wedge\delta\alpha\rangle_E = \pm\int_M \langle\delta \omega\wedge\alpha\rangle_E = 0, \quad \forall \eta \in L.
\]
Hence we conclude
$$\int_M \langle\omega\wedge\eta\rangle_E = 0, \quad \forall \eta \in \Omega^\bullet(M,E),$$
so $\omega = 0$ by non-degeneracy of the pairing, and we deduce that $\delta$ is injective on $C$, i.e.\ $$\ker(\delta) \cap C = \{0\}.$$ 
Nilpotency of $\delta$ further implies
\begin{equation}
\label{delta_surjective}
    L = \delta\bigl(\Omega^\bullet(M,E)\bigr) = \delta\bigl(\im(\delta)\oplus C\bigr) = \delta(C),
\end{equation}
so that $\delta\vert_C$ is surjective onto $L$.

To show $(2)\implies (3)$, note that the existence of a complementary subspace $C$ with $\delta|_C$ bijective implies in particular that $\ker(\delta)\cap C=\{0\}$. Since $L=\im(\delta)\subset\ker(\delta)$ and $\Omega^\bullet(M,E)=L\oplus C$, we must have $\ker(\delta) = \im(\delta)$, i.e.\ the complex defined by the general codifferential is acyclic. 

Let us now show that $(3)\implies (2)$. Given any complement $C$ of $L$, acyclicity of the complex $(\Omega^\bullet(M,E),\delta)$ implies $\ker(\delta)=\im(\delta)=L$, and $L\cap C=\{0\}$ shows that $\ker(\delta)\cap C = L\cap C = \{0\}$, so we again have injectivity of $\delta$ on the complement. The surjectivity of $\delta|_C$ follows from \eqref{delta_surjective}.

\end{proof}

\begin{rmk}[Codifferential and ``gauge fixing'']\label{rmk:codiff_GF}
We should mention that our notion of general codifferential is inspired by the mathematical physics literature, where one talks about ``gauge-fixing operator''. There, it is often commonplace to take $C= \im(\d)\oplus \ker(D)$ (see e.g.\ \cite{costello}), as a generalisation of (and inspired by) the Hodge decomposition. Indeed, one could consider a general codifferential $\delta$ that induces a splitting
\[
\Omega^\bullet(M,E) \stackrel{!}{=} \im(\delta) \oplus \underbrace{\im(\d) \oplus \ker(D)}_C,
\]
where $D=[\delta,\d]$ is the characteristic operator of the codifferential.

When, additionally, $\ker(D)=\{0\}$, the complement $C$ reduces to $\im(\d)$ and it is isotropic. However, this scenario turns out to be insufficient for the purposes of Section \ref{section_ruelle_independence}. This is due to the fact that the codifferential $\delta = \iota_X$, for some appropriately chosen vector field $X$, does not induce a splitting $\Omega^\bullet(M,E) = \im(\iota_X) \oplus \im(\d) \oplus \ker(\L_X)$ for \emph{smooth} differential forms. (See \cite{Schiavina_Stucker2} for a detailed discussion, and Remark \ref{rmk:gooddecompositionaxial}.) In such cases one looks for a more general complement $C$. 
\end{rmk}

\begin{rmk}
\label{remark_complexes}
In the examples we will encounter, the subspace $C$ is also given as the image of an operator $\delta^\bot:\Omega^\bullet(M,E) \to \Omega^{\bullet+1}(M,E)$, which squares to zero $(\delta^\bot)^2=0$ and enjoys graded symmetry w.r.t.\ the pairing: $\int_M\langle \delta^\bot\omega,\eta \rangle_E = \pm\int_M\langle \omega, \delta^\bot\eta \rangle_E$. This in particular ensures that $C$ is isotropic, and thus $\delta\vert_C$ is a bijection. Overall, then, we have to deal with three chain/cochain complexes; the twisted de Rham complex given by the operator $\d$
\[\begin{tikzcd}
\cdots \arrow[r] & \Omega^k(M,E) \arrow[r, "\d"] & \Omega^{k+1}(M,E) \arrow[r] & \cdots
\end{tikzcd}\]
and the two complementary complexes coming from the codifferential structure
\[
\begin{tikzcd}
\cdots \arrow[r, shift right] & \Omega^k(M,E) \arrow[l, shift right] \arrow[r, shift right, "\delta^\bot"'] & \Omega^{k+1}(M,E) \arrow[l, shift right, "\delta"'] \arrow[r, shift right] & \cdots \arrow[l, shift right]
\end{tikzcd}
\]
\end{rmk}

We will be interested in the characteristic operators $D$ obtained by different choices of general codifferential. In particular, we want to investigate how different choices of codifferential affect the value of the determinant of the ensuing (restricted) characteristic operator $D\vert_L$, a notion that requires the introduction of an appropriate regularization scheme. Since $D$ is a degree-preserving operator acting on a graded vector space, we need to extend the notion of regularized determinant to that of a regularized \emph{super}-determinant, and due to the nature of the characteristic operators we will encounter, zeta-regularization turns out not to be sufficiently general. Therefore, we will employ the notion of \emph{flat determinant} (see e.g.\ \cite{baladi,zworski}). For a review, relevant to this paper, see \cite[Appendix B]{Schiavina_Stucker2}. 

According to the properties of $D$, we distinguish the following general codifferentials:

\begin{defn}\label{def_codifferential_types}
A general codifferential (Definition \ref{def_general_codifferential}) is said to be 
\begin{itemize}
    \item \emph{acyclic} iff the complex $(\Omega^\bullet(M,E),\delta)$ is acyclic;
    \item \emph{transversal} iff the characteristic operator $D=[\delta,\d]$ is injective ($\ker(D)=\{0\}$) and there exists a complement $C$ of $L=\im(\delta)$ which is $D$-invariant.
    \item \emph{regular} iff it is acyclic, transversal and the flat determinant $\sdet(D\vert_L)$ is well-defined and nonvanishing.
\end{itemize}
\end{defn}

\begin{rmk}
    Acyclicity of the codifferential is sufficient to have the isomorphism of Equation \eqref{gauge_fixing_isomorphism}, which will be essential later on to deal with the restricted characteristic operator $D\vert_L$. When the complement $C$ is given as the image of a differential operator $\delta^\bot$, the second item in the transversality requirement is implied by the condition $[D,\delta^\bot]=0$.
\end{rmk}

\subsection{The field-theoretic perspective}
In this section we give a hint of an interpretation of the structure presented above, to appreciate its origin and provide potential generalisations.

Let us view $\Omega^\bullet(M,E)$ as a $\Z$-graded vector space where $k$-forms are assigned degree $k$.\footnote{This means that $(\Omega^\bullet(M,E))^{k}=\Omega^{k}(M,E)$.} Denote by $\Omega^\bullet(M,E)[j]$ the (negative) $j$-shift of this graded vector space, that is $(\Omega^\bullet(M,E)[j])^k=(\Omega^\bullet(M,E))^{k-j}$, i.e.\ the degree of a $k$-form in $\Omega^\bullet(M,E)[j]$ is $k-j$. We then consider the shifted densitised cotangent bundle as the vector space:
\begin{equation}\label{e:twocopies}
    \mathcal{F}\doteq T^\vee[-1](\Omega^\bullet(M,E)[1]) \doteq \Omega^\bullet(M,E)[1] \oplus \Omega^\bullet(M,E)[n-2]
\end{equation}
whose elements $(\A,\B) \in \F$ consist of a pair of inhomogeneous differential forms
$$\A = \A^{(0)} + \cdots + \A^{(n)}, \quad \B = \B^{(0)} + \cdots + \B^{(n)},$$
where $\A^{(k)}$ is an $E$-valued $k$-form of assigned degree $|\A^{(k)}| = k-1$, and $\B^{(k)}$ is an $E$-valued $k$-form of assigned degree $|\B^{(k)}| = k - n + 2$. 

On $\F$ there is an odd weak-symplectic pairing\footnote{The map $\Omega^\sharp\colon \F\to\F^*$ has odd degree and it is injective. Indeed, since $\B\wedge\A$ must be an $n$-form, we are pairing $\A^{(k)}$ with $\B^{(n-k)}$ and $|\A^{(k)}| + |\B^{(n-k)}|=+1$. Since $\Omega$ then maps into $\mathbb{R}$, it has (odd) degree $-1$.} $\Omega: \F\times\F \to \R$, given by
\[
    \Omega((0,\B),(\A,0)) = \int_M \langle\B\wedge\A\rangle_E,
\]
and extended by graded skew-symmetry and linearity to all of $\F\times\F$. The space of differential forms over $M$ compact can be given a family of seminorms that make $(\F,\Omega)$ into a weak-symplectic nuclear Fr\'echet vector space (see e.g.\ \cite{kriegl_michor}).

\begin{defn}
\label{def:lagrangian_subspace}
A subspace $\sfL\subset \F$ is said to be Lagrangian iff it is isotropic, i.e.\ $\Omega\vert_{\sfL}=0$, and there is a splitting $\mathcal{F} = \sfL\oplus\sfK$, where $\sfK$ is also isotropic.
\end{defn}

\begin{rmk}
Note that this implies $\sfL$ is a maximal isotropic subspace, and $\Omega$ yields a non-degenerate pairing between $\sfL$ and $\sfK$. This notion is often referred to as the \emph{split Lagrangian} property of a subspace \cite{cattaneo-contreras-split}. Other notions exist, and we refer to \cite{weinstein} for an introduction to Lagrangian subspaces in infinite dimensions.
\end{rmk}

Now, if we are given a general codifferential, in the sense of Definition \ref{def_general_codifferential}, we can construct the subspaces
$$\sfL = \im(\delta)\oplus\im(\delta) \subset \F, \quad {\mathsf{C}} = C \oplus C \subset \F.$$
These will turn out to both be Lagrangian in the sense of Definition \ref{def:lagrangian_subspace}, if we require $C$ isotropic, since we have 
$$\F = \sfL\oplus{\mathsf{C}}, \quad \mathrm{with}\quad \Omega|_{\sfL} = 0, \quad \Omega|_{{\mathsf{C}}} = 0.$$

In the mathematical physics literature, the subspace $\mathsf{L}$ is called a \emph{gauge-fixing} for the topological field theory defined by the ``space of fields'' $\mathcal{F}$ with cohomology theory induced by $\d$ on $\mathcal{F}$. Such a formulation of a field theory is due to Batalin and Vilkovisky, who also proved that---in finite dimensions---one has a local constancy result along families of Lagrangians $\sfL_\tau$. One can interpret the results in this paper as an infinite-dimensional generalisation of the BV theorem, in this specific instance. See \cite{hadfield_kandel_schiavina,Schiavina_Stucker2} and references therein.


\section{Flat superdeterminants of characteristic operators}
\label{section_independence}

In this section, we will study smooth families of general codifferentials ${\delta_\tau}$, and the flat superdeterminant of the associated (restricted) characteristic operators (Definition \ref{def_general_codifferential}):
\[
\sdet(D_\tau\vert_{L_\tau}), \qquad D_\tau = [\delta_\tau,\d], \qquad L_\tau = \im(\delta_\tau).
\]
In particular, we show how for families ${\delta_\tau}$ exhibiting the special structure of Section \ref{section_homotopy_structure}---and under suitable regularity assumptions---one can prove local constancy of the flat determinant in $\tau$. This result recovers invariance of the analytic torsion for a smooth family of metrics \cite{ray_singer}, and of the value at zero of the Ruelle zeta function for a smooth family of Anosov vector fields \cite{viet_dang_fried_conjecture}. Both statements are shown to fit within a more general framework, developed here.

\subsection{Structure of the general codifferential variation}
\label{section_homotopy_structure}
Let $\tau\in(-1,1) \mapsto {\delta_\tau}$ be a smooth family of general codifferentials, as in Definition \ref{def_general_codifferential}, which we view as a variation of the general codifferential ${\delta_0}$. Such a family can be thought of as a homotopy of isotropic subspaces ${L_\tau\doteq\im({\delta_\tau})\subset\Omega^\bullet(M,E)}$, see Lemma \ref{gauge_fixing_isomorphism}.

We restrict our attention to a particularly nice, yet fairly large, class of families of general co\-differentials. We will see that both the Hodge codifferentials for a family of metrics (Section \ref{sec:ATinvariance}) and the contraction w.r.t.\ families of Anosov vector fields (Section \ref{section_ruelle_independence}) exhibit this structure.

\begin{defn}
\label{def_homotopy_structure}
We say that a smooth family of general codifferentials $\tau \in (-1,1) \mapsto {\delta_\tau}$ is \emph{an inner variation} of ${{\delta_0}}$ if there exists a smooth family of degree-preserving local operators
\begin{equation*}
    \theta_\tau: \Omega^\bullet(M,E) \to \Omega^\bullet(M,E), \quad \forall \tau \in (-1,1)
\end{equation*}
such that
\begin{equation*}
    \frac{d}{d\tau} {\delta_\tau} = [\theta_\tau,{\delta_\tau}], \quad \forall \tau \in (-1,1).
\end{equation*}
\noindent A smooth family of general codifferentials $ \tau \in (-1,1) \mapsto {\delta_\tau}$ is said to be \emph{integrable} iff there is a smooth family of invertible and degree-preserving local maps $\beta_\tau\colon \Omega^\bullet(M,E)\to\Omega^\bullet(M,E)$ such that
\begin{equation}\label{e:regGFhomotopy}
     {\delta_\tau} = \beta_\tau \circ {\delta_0} \circ \beta_\tau^{-1}.
\end{equation}

\end{defn}

\begin{lemma}
    An integrable smooth family of general codifferentials is an inner variation.
\end{lemma}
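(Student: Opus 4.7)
The plan is to produce $\theta_\tau$ explicitly by differentiating the conjugation relation \eqref{e:regGFhomotopy} in $\tau$. Since $\beta_\tau$ is smooth in $\tau$ and invertible, we may set
\[
\theta_\tau \doteq \dot\beta_\tau \circ \beta_\tau^{-1}, \qquad \dot\beta_\tau \doteq \tfrac{d}{d\tau}\beta_\tau,
\]
and verify that this candidate satisfies $\frac{d}{d\tau}\delta_\tau = [\theta_\tau,\delta_\tau]$.

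The core calculation is a one-line application of Leibniz combined with the identity $\frac{d}{d\tau}\beta_\tau^{-1} = -\beta_\tau^{-1}\circ\dot\beta_\tau\circ\beta_\tau^{-1}$. Differentiating \eqref{e:regGFhomotopy} gives
\[
\tfrac{d}{d\tau}\delta_\tau = \dot\beta_\tau\circ\delta_0\circ\beta_\tau^{-1} - \beta_\tau\circ\delta_0\circ\beta_\tau^{-1}\circ\dot\beta_\tau\circ\beta_\tau^{-1}
= \theta_\tau\circ\delta_\tau - \delta_\tau\circ\theta_\tau,
\]
after inserting $\beta_\tau\beta_\tau^{-1} = \mathrm{id}$ in the first summand and recognising $\beta_\tau\delta_0\beta_\tau^{-1} = \delta_\tau$. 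Since $\theta_\tau$ has even (in fact zero) degree, the graded commutator $[\theta_\tau,\delta_\tau]$ agrees with the ordinary commutator $\theta_\tau\delta_\tau - \delta_\tau\theta_\tau$, so this is exactly the identity required by Definition \ref{def_homotopy_structure}.

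The remaining checks are structural rather than computational. Smoothness of $\tau\mapsto\theta_\tau$ is inherited from smoothness of $\beta_\tau$ (and of $\beta_\tau^{-1}$, which follows from the inverse function theorem applied pointwise in $\tau$, or directly from the formula above). Degree preservation of $\theta_\tau$ is immediate: both $\beta_\tau$ and $\beta_\tau^{-1}$ are degree-preserving by assumption, and the composition of degree-preserving maps is degree-preserving. Locality of $\theta_\tau$ is likewise inherited, since the composition of two local operators is local.

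There is no real obstacle here; the statement is essentially the infinitesimal form of conjugation. The only mild subtlety worth flagging explicitly in a write-up is the graded-commutator convention, to ensure that the sign works out when $\delta_\tau$ is odd and $\theta_\tau$ is even, and a brief justification that pointwise invertibility of $\beta_\tau$ as a local operator suffices to give a smooth family $\tau\mapsto\beta_\tau^{-1}$ of local operators, so that $\theta_\tau$ belongs to the same class as the $\beta_\tau$.
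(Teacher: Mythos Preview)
Your proof is correct and follows essentially the same approach as the paper: define $\theta_\tau = \dot\beta_\tau\circ\beta_\tau^{-1}$, differentiate the conjugation relation using the formula for $\frac{d}{d\tau}\beta_\tau^{-1}$, and recognise the result as $[\theta_\tau,\delta_\tau]$. Your additional remarks on smoothness, degree preservation, locality, and the graded-commutator sign are sensible elaborations but do not depart from the paper's argument.
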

\begin{proof}
    By definition, an integrable smooth family of general codifferentials admits a smooth path $\tau \to {\beta}_{\tau}$ in the space of degree-preserving local maps, acting by conjugation on $\Omega^\bullet(M,E)$. 
    
    Taking the $\tau$ derivative of Equation \eqref{e:regGFhomotopy}, we find
    \begin{equation}
    \label{homotopy_derivative1}
    \begin{split}
        \frac{d}{d\tau}{\delta_\tau} &= \Bigl(\frac{d}{d\tau}{\beta}_{\tau}\Bigr){{\delta_0}}{\beta}_{\tau}^{-1} + {\beta}_{\tau}{{\delta_0}}\Bigl(\frac{d}{d\tau}{\beta}_{\tau}^{-1}\Bigr) \\
        &= \Bigl(\frac{d}{d\tau}{\beta}_\tau\Bigr){\beta}_{\tau}^{-1}{\beta}_{\tau}{{\delta_0}}{\beta}_{\tau}^{-1} - {\beta}_{\tau}{{\delta_0}}{\beta}_{\tau}^{-1}\Bigl(\frac{d}{d\tau}{\beta}_\tau\Bigr){\beta}_{\tau}^{-1} \\
        &= \theta_{\tau}{\delta_\tau} - {\delta_\tau}\theta_{\tau}\\
        &= [\theta_\tau,\delta_\tau],
    \end{split}
    \end{equation}
    where we defined 
    \begin{equation}
        \theta_{\tau} = \Bigl(\frac{d}{d\tau}{\beta}_\tau\Bigr){\beta}_{\tau}^{-1}. 
    \end{equation}
    Thus, we see that for an integrable family of general codifferentials the derivative $\dot{\delta}_{\tau}$ can be obtained from ${\delta_\tau}$ as the commutator with the degree-preserving map $\theta_\tau$. 
    \end{proof}

\begin{rmk}
    The two examples that we consider in this paper both showcase smooth families of general codifferentials that are of the integrable type. In fact, in these examples one can deduce the operator $\theta_\tau$ from the derivative of a bundle automorphism $\beta_\tau$. However, our main result on local constancy under general codifferential variations works with the more general definition of an inner variation. It is tempting to argue that inner variations come from a Lie algebra action on the space of differential operators on $\Omega^\bullet(M,E)$, and that - as such - they may be integrated to a Lie group action, which would then yield the adjunction by a map $\beta_\tau$, interpreted as the flow of $[\theta_\tau,\cdot]$ seen as a vector field. However, even if this procedure can be made sense of, there is no reason that $\beta_\tau$ and its inverse should be local maps. In particular, note that the degree of the differential operator $\delta_\tau$ can change along an inner variation. We will defer this investigation, i.e. the question of whether an inner variation is integrable, to future work.
\end{rmk}

\subsection{Local constancy of the flat superdeterminant}
\label{subsection_invariance}

The purpose of this section is to study the $\tau$ dependence of the flat superdeterminant of the  characteristic operator of the general codifferential, namely:
\begin{equation}
\label{e_flat_super_det}
    \tau \to \sdet({D_\tau}|_{L_\tau}).
\end{equation}
We will show that for a smooth family of regular general codifferentials (see Definition \ref{def_codifferential_types}) which stems from an inner variation as in Definition \ref{def_homotopy_structure}, and under certain additional existence conditions of analytic nature, the flat superdeterminant in \eqref{e_flat_super_det} is constant in $\tau$.

The geometric reason for this invariance is the structure of $D_\tau = {\delta_\tau}\d+\d {\delta_\tau}$ as a characteristic operator as expounded in Section \ref{section_geometric}, together with the inner variation structure of the family of general codifferentials in Definition \ref{def_homotopy_structure}. Indeed, if $D_\tau$ were a smooth family of operators of this form acting on a finite dimensional graded vector space, invariance of the superdeterminant would follow from these algebraic properties. However, we are working on the infinite dimensional graded vector space $\Omega^\bullet(M,E)$, so additional assumptions on $D_\tau$ are necessary to ensure that certain expressions are well-defined.

For specific classes of characteristic operators, these additional assumptions can be eliminated or at least simplified. For instance, in Section \ref{sec:ATinvariance} we show that the conditions of Theorem \ref{thm_invariance} are satisfied when $D_\tau$ is a family of positive definite elliptic operators, and in Section \ref{section_ruelle_independence} we prove that the conditions hold for the case when $D_\tau = \L_{X(\tau)}$ is the Lie Derivative with respect to a family of Anosov flows. This will allow us to infer two well-known invariance results from the more general invariance result below; namely, the invariance of the analytic torsion with respect to variations of the metric, \cite{ray_singer}, and the invariance of the value at zero of the Ruelle zeta function with respect to variations of the Anosov flow, \cite{viet_dang_fried_conjecture}.

These two cases are concerned with characteristic operators of a very different analytic nature, the Laplacian and the Lie derivative, but the local constancy of the regularized superdeterminant can be seen as stemming from the same basic structure. We thus formulated Theorem \ref{thm_invariance} for a general family of characteristic operators, requiring $D_\tau$ to satisfy properties sufficient for the well-behavedness of the relevant objects. We do not claim that this set of assumptions is optimal, i.e. that they are the necessary conditions for the invariance of \eqref{e_flat_super_det}. Before stating the theorem, we provide a brief motivation of the assumptions.

For the invariance statement to make sense, we must first assume that the curve $\tau \to \sdet({D_\tau}|_{L_\tau})$ exists, i.e. that the flat determinant is well-defined for each $\tau$, see \cite[Appendix B]{Schiavina_Stucker2} for the notion of flat determinant used here. In terms of the general codifferential, we say that ${\delta_\tau}$ is regular for all $\tau$ (Definition \ref{def_codifferential_types}).

Recall that this entails that $D_\tau$ generates a strongly continuous semigroup $\exp(-tD_\tau)$ on $\Omega^\bullet(M,E)$ whose flat trace restricted to the general codifferential subspace $L_\tau=\im(\delta_\tau)$ is well-defined as a distribution in $t\in\mathbb{R}_+$. In particular, for each $\tau$ the wavefront set of the Schwartz kernel of $\exp(-tD_\tau)$, viewed as a distribution\footnote{Here and in the following, we will often suppress the vector bundle in which the distribution takes its values from our notation, when this bundle is clear from the context.} $K_\tau \in \D(\R_+\times M\times M)$, is disjoint from
$$N^*\iota = \bigl\{(t,0,x,\xi,x,-\xi)\in T^*(\R_+\times M\times M)\ |\ t\in\R_+, (x,\xi)\in T^*M\bigr\},$$
the conormal bundle to the diagonal map $\iota\colon \R_+\times M\to \R_+\times M\times M$, $\iota(t,x)=(t,x,x)$.
The flat determinant of $D_\tau\vert_{L_\tau}$ can then be expressed in terms of the following auxiliary function depending on two complex parameters $\lambda,s\in\mathbb{C}$:
\begin{equation}
\label{det_function}
    F(\tau,\lambda,s) \doteq \frac{1}{\Gamma(s)}\lim_{N\to\infty}\pair{\str\left(e^{-t{D_\tau}}\big|_{L_\tau}\right)}{t^{s-1}e^{-\lambda t}\,\chi_N(t)},
\end{equation}
where $\{\chi_N\}_{N\in\N}$ is a sequence of cutoff functions
\begin{equation}
\label{cutoff_functions}
    \chi_N\in\C_c(\R_+) \quad\text{with}\quad 0\leq \chi_N \leq 1 \quad\text{and}\quad \chi_N \equiv 1 \,\text{ on }\, \bigl[\tfrac{1}{N},N\bigr].
\end{equation}
The regularity condition on ${\delta_\tau}$ is then translated into the requirement that the limit in \eqref{det_function} converges locally uniformly for $\Re(\lambda),\Re(s)$ large enough and that $F(\tau,\lambda,s)$ has an analytic continuation to $\lambda=0$, $s=0$ for each $\tau\in(-1,1)$. We then have (c.f.\ \cite[Appendix B]{Schiavina_Stucker2})
\[
\sdet({D_\tau}|_{L_\tau}) \doteq \exp\Bigr(-\frac{d}{ds} F(\tau,\lambda,s)\Big|_{s=0,\,\lambda=0}\,\Bigr).
\]
Note that, by definition of transversal general codifferential (Definition \ref{def_codifferential_types}), for each $\tau$ there is a splitting of the form $\Omega^\bullet(M,E) = {L_\tau}\oplus {C_\tau}$, for some $\tau$-dependent complement ${C_\tau}$ which is left invariant by $D_\tau$. We use the projection operator $\Pi_{{L_\tau}}$ induced by this splitting to define the flat determinant of the restricted operator. (More details regarding this restriction procedure are reported in \cite[Appendix B]{Schiavina_Stucker2}. See in particular Definition \cite[Definition B.12]{Schiavina_Stucker2}.)

Then, the logarithm of the flat superdeterminant of the characteristic operator reads
\begin{equation}
\label{det_from_F}
    \log\sdet({D_\tau}|_{L_\tau}) = -\frac{d}{ds}F(\tau,\lambda,s)\Big|_{\lambda=0,\,s=0},
\end{equation}
whence the proof of its invariance relies on a careful study of the $\tau$-derivative of the function $F(\tau,\lambda, s)$. In order to ensure that this $\tau$-derivative is well-defined, at least for $\Re(\lambda),\Re(s)$ large enough, we must make certain differentiability assumptions for the semigroup $\exp(-tD_\tau)$.

In Assumption \ref{assumption_diff_semigroup}, we require that the semigroup of operators itself is continuously differentiable. Note that for general $D_\tau$ this does not immediately follow from the smoothness of $\tau\to D_\tau$. Differentiability of the semigroup will allow us to derive a formula for the derivative $\frac{d}{d\tau}\exp(-tD_\tau)$, see Lemma \ref{duhamel_lemma}.

We further need this differentiability to carry over to the flat trace of $\exp(-tD_\tau)$. Recall that $\tr:\D_\Gamma(\R_+\times M^2)\to\D(\R_+)$ defines a sequentially continuous linear functional with respect to the H\"ormander topology, see \cite[Definition 8.2.2]{hormander1} for this notion, where $\D_\Gamma(\R_+\times M^2)$ is the space of distributions with wavefront set contained in a closed conic set $\Gamma$ disjoint from the conormal to the diagonal $N^*\iota$. Thus, a natural condition which ensures the differentiability of the flat trace of the semigroup is that $\tau \to \exp(-tD_\tau)$ should be differentiable with respect to the H\"ormander topology, i.e. the difference quotients converge in the H\"ormander seminorms, see Assumption \ref{assumption_diff_hormander}. 

For technical reasons that arise in the proof of Lemma \ref{lemma_tau_to_t}, we must further place a stronger restriction on the wavefront sets of $\exp(-tD_\tau)$. Although disjointness from $N^*\iota$ is enough to guarantee the existence of the flat trace, our invariance proof requires $\WF(\exp(-tD_\tau))$ to satisfy the stricter Assumption \ref{assumption_smaller_wavefront}. Note that this guarantees precisely that the integrand in formula \eqref{duhamel}, below, has wavefront set disjoint from the conormal bundle to the map 
\begin{equation}\label{e_diagonal}
\tilde{\iota}:(t,u,x)\in\R_+^2\times M \to (t,u,x,x)\in\R_+^2\times M^2.
\end{equation}

For $(\lambda,s)$ contained in the domain where the limit in \eqref{det_function} is well-defined, we will express the $\tau$-derivative of $F$ in terms of another auxiliary function $G$, which we define as follows:
\begin{equation}
\label{G_function}
    G(\tau,\lambda,s) \doteq \lim_{N\to\infty} \pair{\str\big(\theta_{\tau}e^{-t{D_\tau}}\big)}{t^{s-1}e^{-\lambda t}\chi_N(t)},
\end{equation}
where $\theta_\tau$ is the operator defining the inner variation ${\delta_\tau}$ (Definition \ref{def_homotopy_structure}). In order for the above limit to exist and define an analytic function of $(\lambda,s)$, we will require locally uniform convergence in $(\tau,\lambda,s)$ for $\Re(\lambda),\Re(s)$ large enough, see Assumption \ref{assumption_G_convergencce}. This also guarantees that $\tau$-differentiability carries over to the limit.

Indeed, using the inner variation structure of ${\delta_\tau}$ and our additional assumptions, we show in Lemma \ref{lem:F'intermsofG} that
\begin{equation}
\label{e:F'intermsofG}
    \frac{d}{d\tau}F(\tau,\lambda,s) = \frac{\lambda}{\Gamma(s)}G(\tau,\lambda,s+1) - \frac{s}{\Gamma(s)}G(\tau,\lambda,s),
\end{equation}
when $(\lambda,s)$ is contained in the domain where the limits \eqref{det_function} and \eqref{G_function} exist. This formula already suggests that the $\tau$-derivative of \eqref{det_from_F} should vanish. Of course, the point $(\lambda,s)=(0,0)$ typically fails to be contained in the domain where $G$ is defined by \eqref{G_function}.

Our final Assumption \ref{assumption_analytic_continuation}, and typically the most subtle to prove in specific cases, is then an analytic continuation condition on $G$, so that, after some manipulation stemming from Equation \eqref{e:F'intermsofG}, we can make sense of the quotient
\[
\frac{\sdet({D_\tau}|_{L_\tau})}{\sdet({D_0}|_{L_0})} = \exp\Big(-\frac{d}{ds}\big(F(\tau,\lambda,s) - F(0,\lambda,s)\big)\Big|_{\lambda=0,\,s=0}\Big) = 1.
\]

This discussion motivates the following main result.

\begin{maintheorem}
\label{thm_invariance}
Let $\tau \in (-1,1) \to {\delta_\tau}$ be a smooth family of regular general codifferentials (Definitions \ref{def_general_codifferential} and \ref{def_codifferential_types}) such that ${\delta_\tau}$ is an inner variation of ${\delta_0}$, as in Definition \ref{def_homotopy_structure}. Denote by $D_\tau = [\delta_\tau,\d]$ the associated family of characteristic operators and by $L_\tau = \im(\delta_\tau)$. Assume that
\begin{enumerate}[label=(A.\roman*)]
    \item \label{assumption_diff_semigroup} the family of strongly continuous semigroups
    $e^{-tD_\tau}: \Omega^\bullet(M,E) \to \Omega^\bullet(M,E)$
    is continuously differentiable with respect to $\tau$;\footnote{In the sense that for any $\alpha \in \Omega^k(M,E)$ and any $t\in\R_+$ the map $\tau \to e^{-tD_\tau}\alpha$ is differentiable with respect to the Fr\'echet topology on $\Omega^k(M,E)$ and the derivative $\frac{d}{d\tau}e^{-tD_\tau}\alpha$ is continuous in $(\tau,t)$.}
    \item \label{assumption_diff_hormander} for each $\tau_0\in(-1,1)$ there exists a closed conic set $\Gamma \subset T^*(\R_+\times M\times M)$ disjoint from\footnote{Recall that $N^*\iota=\{(t,0,x,\xi,x,-\xi)\in T^*(\R_+\times M\times M)\ |\ t\in\R_+, (x,\xi)\in T^*M\}$.} $N^*\iota$, such that $\tau \to K_\tau$ is differentiable in a neighborhood of $\tau_0$ with respect to the H\"ormander topology on $\D_\Gamma(\R_+\times M \times M)$, where $K_\tau$ is the Schwartz kernel of $\exp(-tD_\tau)$;
    \item \label{assumption_smaller_wavefront} for each $\tau$, the wavefront set of $K_\tau$ satisfies
    $$(t,0,x,\xi,y,-\eta)\in\WF(K_\tau) \implies (s,0,y,\eta,x,-\xi)\notin\WF(K_\tau),\quad \forall s\in\R_+;$$
    \item \label{assumption_G_convergencce} there is a domain
    $$Z = \{\Re(\lambda)>C_1, \Re(s)>C_2\} \subset \mathbb{C}^2,$$
    such that for a sequence of cutoff functions as in \eqref{cutoff_functions} and any smooth bounded function $f\in C^\infty_b(\R_+)$ the sequence
    $$\pair{\str\big(\theta_{\tau}e^{-t{D_\tau}}\big)}{t^{s-1}e^{-\lambda t}\chi_N(t)f(t)}$$
    converges locally uniformly in $(\tau,\lambda,s) \in (-1,1)\times Z$ as $N\to\infty$; in particular, the limit defining the function $G(\tau,\lambda,s)$ of Equation \ref{G_function} exists for each $\tau$ and each $(\lambda,s) \in Z$;
    \item \label{assumption_analytic_continuation} for all $\tau\in(-1,1)$ the function $G(\tau,\lambda,s)$ admits an analytic continuation to some connected domain $\tilde{Z}\subset\mathbb{C}^2$ with $(0,0),(0,1) \in \tilde{Z}$ and $\tilde{Z}\cap Z \neq \emptyset$; moreover, $G(\tau,\lambda,s)$ remains bounded on compact subsets of $(-1,1)\times \tilde{Z}$.
\end{enumerate}
Then, for all $\tau \in (-1,1)$, we have
\[
\sdet({D_\tau}|_{L_\tau}) = \sdet({D_0}|_{L_0}).
\]
\end{maintheorem}

\begin{rmk}[Generalization]
\label{rmk_generalization}
    The specific nature of $(\Omega^\bullet(M,E),\d)$ as the twisted de Rham complex is not important for the general theory. In fact, we can generalize Theorem \ref{thm_invariance} to the setting of a graded vector bundle $\V$ over $M$ equipped with a degree $1$ differential operator $d$, defining an acyclic complex
    \[\begin{tikzcd}
        \cdots \arrow[r] & C^\infty(M,\V^k) \arrow[r, "d"] & C^\infty(M,\V^{k+1}) \arrow[r] & \cdots
    \end{tikzcd}\]
    i.e. such that $d\circ d = 0$ and the cohomology groups of this complex are trivial. We can then define the notion of a general codifferential $\delta$ on $\V\to M$, that is a degree $-1$ differential operator satisfying $\delta\circ\delta=0$ and thus defining a complex
    \[\begin{tikzcd}
        \cdots & \arrow[l] C^\infty(M,\V^k) & \arrow[l, swap, "\delta"] C^\infty(M,\V^{k+1}) & \arrow[l] \cdots
    \end{tikzcd}\]
    The associated characteristic operator is the graded commutator $D=[\delta,d]=\delta d+d\delta$. We can now consider the flat superdeterminant $\sdet(D_\tau|_{\im(\delta_\tau)})$ along a smooth family $\tau\to\delta_\tau$ of such general codifferentials which forms an inner variation, in the sense that $\frac{d}{d\tau}\delta_\tau = [\theta_\tau,\delta_\tau]$ for some degree preserving differential operators $\theta_\tau$ on $\V\to M$. As in Definition \ref{def_codifferential_types}, we further require that the complexes defined by $\delta_\tau$ are acyclic and there are $D_\tau$-invariant splittings $C^\infty(M,\V) = \im(\delta_\tau)\oplus C_\tau$. Then, assuming that the flat superdeterminant is well-defined and nonvanishing for all $\tau$ and an appropriate version of Assumptions \ref{assumption_diff_semigroup}-\ref{assumption_analytic_continuation} hold, essentially the same proof as for Theorem \ref{thm_invariance} shows that
    $$\sdet(D_\tau|_{\im(\delta_\tau)}) = \sdet(D_0|_{\im(\delta_0)}), \quad \forall\,\tau.$$
    
    We chose to formulate Theorem \ref{thm_invariance} above in terms of the twisted de Rham differential, since the two examples studied in Sections \ref{sec:ATinvariance} and \ref{section_ruelle_independence} are obtained from $\d$ by an appropriate choice of general codifferential.
\end{rmk}

\begin{rmk}
    If one prefers to avoid working with semigroups on the Fr\'echet space of smooth sections, it is enough to assume that $\exp(-tD_\tau)$ defines a strongly continuous semigroup both on $L^2$ and on the Sobolev space $H^s$ for large enough $s$. One then requires the appropriate modification of Assumption \ref{assumption_diff_semigroup} (the regularity $s$ must be high enough for the right hand side in Equation \eqref{duhamel}, below, to make sense).
\end{rmk}
\begin{rmk}
    Requiring convergence in the presence of a bounded smooth function in Assumption \ref{assumption_G_convergencce} can be viewed as the analog of taking the absolute value, which is not available for distributions. Indeed, if $\str\bigl(\theta_{\tau}e^{-t{D_\tau}}\bigr)$ defines a continuous function of $t$, as opposed to just a distribution on $\R_+$, as is for instance the case when $D_\tau$ is elliptic, then Assumption \ref{assumption_G_convergencce} is equivalent to requiring locally uniform convergence of the integral 
    $\int_0^\infty\bigl|\str\bigl(\theta_{\tau}e^{-t{D_\tau}}\bigr)t^{s-1}e^{-\lambda t}\bigr|\,dt$
    for $(\tau,\lambda,s)\in (-1,1)\times Z$.
\end{rmk}
\begin{rmk}
\label{rmk_eliminating_lambda_s}
    If it happens that $\lambda=0$ is already in the domain of convergence for the limit defining $F(\tau,\lambda,s)$ in \eqref{det_function}, then no analytic continuation in $\lambda$ is necessary and one can evaluate all expressions directly at $\lambda=0$. Equation \eqref{e:F'intermsofG} then becomes $$\frac{d}{d\tau}F(\tau,0,s) = -\frac{s}{\Gamma(s)}G(\tau,0,s)$$ for $s$ in the domain of convergence. Thus, Assumption \ref{assumption_analytic_continuation} can be replaced with the analytic continuation of $G(\tau,0,s)$ to $s=0$. Similarly, if $s=0$ is in the domain of convergence of \eqref{det_function}. One can then directly evaluate the $s$-derivative appearing in the definition of the flat determinant, \eqref{det_from_F}, and \eqref{e:F'intermsofG} can be reformulated as\footnote{recall that the gamma function has a simple pole with residue $1$ at $s=0$} $$\frac{d}{d\tau}\bigl(\frac{d}{ds}F(\tau,\lambda,s=0)\bigr) = \lambda G(\tau,\lambda,1)$$ for $\lambda$ in the domain of convergence. So Assumption \ref{assumption_analytic_continuation} can be replaced in this case with the analytic continuation of $G(\tau,\lambda,1)$ to $\lambda=0$.
\end{rmk}

In Section \ref{sec:ATinvariance} and Section \ref{section_ruelle_independence} we show that the assumptions of Theorem \ref{thm_invariance} are valid in two very different but important cases: one where the flat super-determinant of the characteristic operator computes the analytic torsion for a manifold with a local system, and when it instead returns the value at zero of the Ruelle zeta function on a manifold that admits regular Anosov flows. Hence, we get a unified proof of their local constancy.

\subsubsection{Proof of Theorem \ref{thm_invariance}}

By assumption, ${\delta_\tau}$ are \emph{regular} general codifferentials (Definition \ref{def_codifferential_types}). In particular, the flat superdeterminant of the characteristic operator ${D_\tau} = [\delta_\tau,\d]$ restricted to the general codifferential subspace $L_\tau$ is well-defined for each $\tau$. The proof of the invariance of this superdeterminant will proceed by computing the $\tau$-derivative of the auxiliary function $F(\tau,\lambda,s)$ given in \eqref{det_function}. As a first step in this direction, we derive a formula for the $\tau$-derivative of the operator semigroup $\exp(-t{D_\tau})$. This is given by what is sometimes called Duhamel's formula, see \cite{berline_getzler} for the case where $D_\tau$ is a family of Laplacians. Here, we derive a corresponding formula for a more general family of characteristic operators under the hypothesis that $D_\tau$ generates a continuously differentiable family (w.r.t.\ $\tau$) of strongly continuous semigorups on the nuclear Fr\'echet space $\Omega^\bullet(M,E)$.

\begin{lemma}
\label{duhamel_lemma}
Let $\tau \in (-1,1) \to {D_\tau}$ be a smooth family of differential operators on $\C(M,V)$, for some vector bundle $V$ over $M$, generating a family of strongly continuous semigroups
$$e^{-t{D_\tau}}: \C(M,V) \to \C(M,V),$$
such that $\tau \to e^{-t{D_\tau}}f$ is differentiable for each $f \in \C(M,V), t\in \R_+$ and $\frac{d}{d\tau}e^{-t{D_\tau}}f \in \C(M,V)$ depends continuously on $(\tau,t)$.
Then the $\tau$-derivative is given by
\begin{equation}
\label{duhamel}
    \frac{d}{d\tau}e^{-t{D_\tau}} = -\int_0^t e^{-(t-u){D_\tau}}\Bigl(\frac{d}{d\tau}{D_\tau}\Bigr)e^{-u{D_\tau}}\,du.
\end{equation}
\end{lemma}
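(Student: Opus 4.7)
The plan is to prove Duhamel's formula by the standard interpolation trick, then carefully justify the limits in the Fréchet topology of $\C(M,V)$ using the continuous differentiability hypothesis.

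First, I would fix $\tau,\tau'\in(-1,1)$ close to one another, fix $t\in\R_+$ and $f\in\C(M,V)$, and introduce the auxiliary map
\[
u\in[0,t] \;\longmapsto\; \varphi(u) \doteq e^{-(t-u)D_\tau}e^{-uD_{\tau'}}f \in \C(M,V).
\]
Since $D_\tau$ generates a strongly continuous semigroup on the nuclear Fréchet space $\C(M,V)$, its generator acts as $\frac{d}{du}e^{-uD_\tau}g = -D_\tau e^{-uD_\tau}g = -e^{-uD_\tau}D_\tau g$ on the dense subspace of smooth vectors (here any element of $\C(M,V)$, since $D_\tau$ is a differential operator). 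Applying the product rule in $u$ gives
\[
\frac{d}{du}\varphi(u) = D_\tau e^{-(t-u)D_\tau}e^{-uD_{\tau'}}f - e^{-(t-u)D_\tau}D_{\tau'}e^{-uD_{\tau'}}f = e^{-(t-u)D_\tau}(D_\tau - D_{\tau'})e^{-uD_{\tau'}}f.
\]
Integrating this identity in $u$ from $0$ to $t$ yields
\[
e^{-tD_{\tau'}}f - e^{-tD_\tau}f \;=\; \int_0^t e^{-(t-u)D_\tau}(D_\tau - D_{\tau'})e^{-uD_{\tau'}}f\,du.
\]

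Next, I would divide by $\tau'-\tau$ and pass to the limit as $\tau'\to\tau$. Smoothness of $\tau\mapsto D_\tau$ as a family of differential operators guarantees that $(D_{\tau'}-D_\tau)/(\tau'-\tau)\to \dot{D}_\tau\doteq \frac{d}{d\tau}D_\tau$ in the Fréchet topology of differential operators (locally uniformly in the coefficients and their derivatives), and in particular pointwise on smooth sections. Combined with the hypothesis that $\tau'\mapsto e^{-uD_{\tau'}}g$ is continuous and continuously differentiable — so that $e^{-uD_{\tau'}}$ converges to $e^{-uD_\tau}$ strongly and uniformly in $u\in[0,t]$ — and the strong continuity of $e^{-(t-u)D_\tau}$, the integrand converges to $e^{-(t-u)D_\tau}\dot{D}_\tau e^{-uD_\tau}f$ uniformly in $u\in[0,t]$ in the Fréchet topology.

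Finally, I would invoke a dominated-convergence argument for Fréchet-valued integrals: the continuous dependence of the integrand on $(u,\tau')$ together with compactness of $[0,t]\times[\tau_0-\varepsilon,\tau_0+\varepsilon]$ yields uniform bounds in every continuous seminorm, which legitimates exchanging the limit with the integral. This gives
\[
\frac{d}{d\tau}e^{-tD_\tau}f = -\int_0^t e^{-(t-u)D_\tau}\dot{D}_\tau e^{-uD_\tau}f\,du,
\]
as claimed, and the continuous dependence of the right-hand side on $(\tau,t)$ follows from strong continuity of the semigroup and smoothness of $\tau\mapsto D_\tau$. The main obstacle is the second step: ensuring that the difference quotient of $D_{\tau'}$ converges to $\dot D_\tau$ uniformly enough on the semigroup orbit $\{e^{-uD_{\tau'}}f\mid u\in[0,t],\,\tau'\text{ near }\tau\}$ to justify passing to the limit inside the integral — this is precisely where the hypothesis that $\frac{d}{d\tau}e^{-tD_\tau}f$ exists and depends continuously on $(\tau,t)$ is used, via the joint continuity of $\tau'\mapsto e^{-uD_{\tau'}}f$ in the Fréchet topology, uniformly in $u\in[0,t]$.
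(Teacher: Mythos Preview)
Your proof is correct but takes a genuinely different route from the paper. You use the classical interpolation trick, comparing $e^{-(t-u)D_\tau}e^{-uD_{\tau'}}f$ at $u=0$ and $u=t$ to obtain a finite-difference version of Duhamel, and then pass to the limit $\tau'\to\tau$ inside the integral. The paper instead works entirely at fixed $\tau$: it shows that both $\frac{d}{d\tau}e^{-tD_\tau}f$ and the Duhamel integral satisfy the same inhomogeneous evolution equation $(\partial_t + D_\tau)w = -\dot D_\tau e^{-tD_\tau}f$ with zero initial data---the former by exchanging the order of $\partial_t$ and $\partial_\tau$, which is precisely where the continuity hypothesis on $\frac{d}{d\tau}e^{-tD_\tau}f$ enters---and then proves a short uniqueness lemma for this IVP on the Fr\'echet space, using the semigroup $e^{-(T-t)D_\tau}$ as integrating factor. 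Your approach is the more familiar textbook argument and is quite direct; the paper's approach trades the limit-interchange step (which, as you note, requires uniform control of the difference quotients on the semigroup orbit) for a Schwarz-type exchange of derivatives plus a self-contained uniqueness argument. One minor point you assert without proof, and which the paper spells out: that $\frac{d}{du}e^{-uD_\tau}g = -D_\tau e^{-uD_\tau}g$ holds for \emph{every} $g\in C^\infty(M,V)$, not merely on the abstract generator domain, needs a short density-plus-equicontinuity argument using that $D_\tau$ is continuous on $C^\infty(M,V)$.
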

\begin{proof}
For each $f\in\C(M,V)$, by strong continuity of the semigroup, $e^{-tD_\tau}f$ depends continuously on $t\in\R_+$ in the Fr\'echet space topology of $\C(M,V)$. Thus, by the Fr\'echet space version of the uniform boundedness principle, for any $T>0$ the set of operators $\{e^{-tD_\tau} \,|\, t\in [0,T]\}$ is equicontinuous on $\C(M,V)$. Note also that the differential operator $\dot D_\tau = \frac{d}{d\tau}D_\tau$ is continuous on $\C(M,V)$. Using these two observations, one finds
that for any $f\in\C(M,V)$ and $t\in\R_+$:
$$u\in[0,t] \to e^{-(t-u){D_\tau}}\Bigl(\frac{d}{d\tau}{D_\tau}\Bigr)e^{-u{D_\tau}}f \in\C(M,V)$$
is continuous. Thus, the integral
$$\int_0^t e^{-(t-u){D_\tau}}\Bigl(\frac{d}{d\tau}{D_\tau}\Bigr)e^{-u{D_\tau}}f\,du$$
is well-defined as an element of $\C(M,V)$ and the right-hand side of \eqref{duhamel} defines a continuous operator on $\C(M,V)$, see for instance \cite{hamilton_nash-moser} for an overview of calculus in Fr\'echet spaces (see also \cite{kriegl_michor}). In particular, below we will make use of the fundamental theorems of calculus for integrals with values in Fr\'echet spaces, see \cite[Thm 2.2.2 and Thm 2.2.3]{hamilton_nash-moser}.

Since $D_\tau$ is the generator of the semigroup, there exists a dense subspace $S_\tau \subseteq \C(M,V)$, such that $t \to e^{-tD_\tau}f$ is differentiable for all $f\in S_\tau$ and 
\begin{equation}
\label{generator_semigroup}
\frac{d}{dt}e^{-tD_\tau}f = -D_\tau e^{-tD_\tau}f = -e^{-tD_\tau}D_\tau f, \quad \forall f \in S_\tau,
\end{equation}
see \cite{miyadera_semigroups} for the theory of semigroups on Fr\'echet spaces. Since the generator $D_\tau$ is itself continuous on $\C(M,V)$, equation \eqref{generator_semigroup} in fact holds on all of $\C(M,V)$. Indeed, for any small $h \in \R$ we have
$$\frac{1}{h}\Bigl(e^{-(t+h)D_\tau}f - e^{-tD_\tau}f\Bigr) = -\frac{1}{h}D_\tau\int_{t}^{t+h} e^{-uD_\tau}f\,du$$
on the dense subspace $S_\tau \subseteq \C(M,V)$. By equicontinuity, both sides of the above equation depend continuously on $f$, so the equation extends to $\C(M,V)$. Taking the limit $h\to 0$, we find $\frac{d}{dt} e^{-tD_\tau}f = -D_\tau e^{-tD_\tau}f$ for all $f \in \C(M,V)$.

Let now $f \in \C(M,V)$ be fixed. By assumption,
$$\frac{d}{d\tau}\frac{d}{dt}e^{-tD_\tau}f = -\frac{d}{d\tau}D_\tau e^{-tD_\tau}f = -\dot D_\tau e^{-tD_\tau}f - D_\tau\frac{d}{d\tau}e^{-tD_\tau}f$$
depends continuously on $(t,\tau)$ as an element of $\C(M,V)$. Thus, we can exchange the order of derivatives, and find
$$\Bigl(\frac{d}{dt} + D_\tau\Bigr)\frac{d}{d\tau}e^{-tD_\tau}f = -\dot D_\tau e^{-tD_\tau}f.$$
On the other hand,
$$\frac{d}{dt}e^{-(t-u){D_\tau}}\dot D_\tau e^{-u{D_\tau}}f = -D_\tau e^{-(t-u){D_\tau}}\dot D_\tau e^{-u{D_\tau}}f$$
depends continuously on $(t,u)$, so we find
$$\Bigl(\frac{d}{dt} + D_\tau\Bigr)\int_0^t e^{-(t-u){D_\tau}}\dot D_\tau e^{-u{D_\tau}}\,du = \dot D_\tau e^{-tD_\tau}f.$$
Thus, if we define
$$v(t) = \frac{d}{d\tau}e^{-t{D_\tau}}f + \int_0^t e^{-(t-u){D_\tau}}\Bigl(\frac{d}{d\tau}{D_\tau}\Bigr)e^{-u{D_\tau}}f\,du$$
then $t \in \R_+ \to v(t) \in \C(M,E)$ is continuously differentiable and satisfies
\begin{equation}
\label{IVP_semigroup}
\begin{split}
    \Bigl(\frac{d}{dt} + D_\tau\Bigr)v(t) &= 0, \quad \forall t\in\R_+,\\
    \lim_{t\to 0}v(t) &= 0.
\end{split}
\end{equation}
We claim that this implies $v(t)=0$ for all $t$. Indeed, such $v$ satisfies 
$$v(t)=\int_0^t\frac{d}{dr}v(r)\,dr = -D_\tau\int_0^t v(r)\,dr, \quad \forall t\in\R_+.$$
Now, for any $T>0$ the function $t \to e^{-(T-t)D_\tau}\int_0^t v(r)\,dr$ is continuously differentiable for $t \in (0,T)$ and satisfies
$$\frac{d}{dt}\Bigl(e^{-(T-t)D_\tau}\int_0^t v(r)\,dr\Bigr) = e^{-(T-t)D_\tau}v(t) - e^{-(T-t)D_\tau}D_\tau\int_0^t v(r)\,dr = 0.$$
Integrating this equality, we find $\int_0^T v(r)\,dr = 0$ for all $T>0$, i.e. $v=0$. Since $f \in \C(M,V)$ was arbitrary, this implies the lemma.
\end{proof}

Note that the characteristic operator $D= \delta\d + \d \delta$ determined by a general codifferential $\delta$ commutes with both $\delta$ and $\d$ since $\delta^2 = \d^2 = 0$. Whenever $D$ generates a strongly continuous semigroup on the Fr\'echet space $\Omega^\bullet(M,E)$, this commutativity carries over to the semigroup, that is
\begin{equation}
    [e^{-tD},\delta] = 0, \quad [e^{-tD},\d] = 0,
\end{equation}
as shown in the following lemma.

\begin{lemma}
\label{lemma_commutativity}
Let $D, P$ be commuting differential operators on some vector bundle $V$ over $M$ and assume $D$ is the generator of a strongly continuous semigroup
$$e^{-tD}: \C(M,V) \to \C(M,V).$$
Then $P$ commutes with $e^{-tD}$ for all $t\in\R_+$.
\end{lemma}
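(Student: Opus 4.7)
The plan is to adapt the uniqueness argument already developed at the end of the proof of Lemma \ref{duhamel_lemma}. Fix $f\in C^\infty(M,V)$ and consider the curve
\[
\psi(t) \doteq e^{-tD}(Pf) - P\bigl(e^{-tD}f\bigr), \qquad t\in\R_+,
\]
in $C^\infty(M,V)$. I would first note that $\psi$ is well-defined and continuous in $t$: the first term is continuous by strong continuity of the semigroup applied to $Pf\in C^\infty(M,V)$, and the second because $P$ is a continuous linear operator on the Fr\'echet space $C^\infty(M,V)$ composed with $t\mapsto e^{-tD}f$.

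Next, I would show that $\psi$ is continuously differentiable and satisfies an initial value problem of the type treated in Lemma \ref{duhamel_lemma}. As recorded there, the identity $\frac{d}{dt}e^{-tD}g=-De^{-tD}g=-e^{-tD}Dg$ extends from the dense subspace $S_D$ to all $g\in C^\infty(M,V)$, because $D$ is a continuous differential operator on the Fr\'echet space and the semigroup is equicontinuous on compact time intervals. Applying this with $g=Pf$ for the first term, and pulling the continuous operator $P$ through the $t$-derivative in the second term, one obtains
\[
\tfrac{d}{dt}\psi(t) = -De^{-tD}(Pf) + PD\,e^{-tD}f.
\]
Here is the only place where the commutation hypothesis enters: using $PD=DP$ on the second summand collapses the right-hand side to $-D\psi(t)$. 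Combined with the obvious initial condition $\lim_{t\to 0}\psi(t)=Pf-Pf=0$, this shows that $\psi$ solves
\[
\bigl(\tfrac{d}{dt}+D\bigr)\psi(t)=0,\qquad \lim_{t\to 0}\psi(t)=0,
\]
the very same IVP \eqref{IVP_semigroup} from the previous lemma.

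Finally, I would invoke the uniqueness argument displayed at the end of the proof of Lemma \ref{duhamel_lemma} verbatim: for any $T>0$, the curve $t\mapsto e^{-(T-t)D}\int_0^t\psi(r)\,dr$ has vanishing derivative on $(0,T)$, hence is constant, forcing $\int_0^T\psi(r)\,dr=0$ for every $T>0$ and therefore $\psi\equiv 0$. This gives $e^{-tD}Pf=Pe^{-tD}f$ for all $t\in\R_+$ and all $f\in C^\infty(M,V)$, which is the claim. I do not expect any substantial obstacle, since all analytical subtleties (strong continuity on a Fr\'echet space, continuity of differential operators, and well-posedness of the IVP) have already been dealt with in the preceding lemma; the genuinely new ingredient is the single line where $PD=DP$ is used to rewrite $\tfrac{d}{dt}\psi$ as $-D\psi$.
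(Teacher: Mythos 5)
Your proof is correct and follows essentially the same argument as the paper: differentiate $e^{-tD}Pf - Pe^{-tD}f$ using the fact (established in the proof of Lemma \ref{duhamel_lemma}) that $\frac{d}{dt}e^{-tD}g = -De^{-tD}g$ for all $g\in\C(M,V)$, invoke $PD=DP$ to reduce to the homogeneous IVP \eqref{IVP_semigroup}, and conclude by the same uniqueness argument. The only cosmetic difference is that the paper works with $v = Pe^{-tD}f - e^{-tD}Pf$ and pulls commutativity through at the level of $\frac{d}{dt}Pe^{-tD}f$, whereas you apply it after assembling both terms; the content is identical.
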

\begin{proof}
    As shown in the proof of the previous lemma, $t \to e^{-tD}f$ is differentiable for all $f\in\C(M,V)$ and $\frac{d}{dt}e^{-tD}f = -De^{-tD}f$. Since $P$ defines a continuous operator on $\C(M,V)$, the map $t\to Pe^{-tD}f$ is also differentiable and 
    $$\frac{d}{dt}Pe^{-tD}f = -PDe^{-tD}f = -DPe^{-tD}f$$
    by the commutativity of $P$ and $D$. Thus, for fixed $f\in\C(M,V)$,
    $$v(t) = Pe^{-tD}f - e^{-tD}Pf$$
    satisfies
    \begin{equation*}
    \begin{split}
        \Bigl(\frac{d}{dt} + D_\tau\Bigr)v(t) &= 0, \quad \forall t\in\R_+,\\
        \lim_{t\to 0}v(t) &= 0.
    \end{split}
    \end{equation*}
    As in the proof of the previous lemma, this implies $v(t)=0$ for all $t\in\R_+$.
\end{proof}

In order to compute the $\tau$-derivative of the expression in \eqref{det_function}, it is useful to rewrite the flat super trace $\str(\exp(-tD_\tau)|_{L_\tau})$ in a form that does not involve the explicit restriction to the general codifferential subspace. This reformulation follows from the algebraic properties of a regular general codifferential $\delta$ (Definition \ref{def_codifferential_types}) and the resulting characteristic operator $D$. By transversality, there is a splitting $\Omega^\bullet(M,E)=L\oplus C$ with $L=\im(\delta)$ which is left invariant by $D$.
We need to consider homogeneous differential forms in $\Omega^k(M,E)$ separately.
In the following, given an operator $A$ on $\Omega^\bullet(M,E)$, we denote by $A^{(k)}$ the restriction of $A$ on $E$-valued $k$-forms. In addition, we write 
\[
L^{(k)} \doteq L\cap\Omega^k(M,E) = \im(\delta^{(k+1)}), \qquad C^{(k)} \doteq C\cap\Omega^k(M,E).
\]
Thanks to Lemma \ref{lemma_commutativity}, $\delta$ commutes with the semigroup $\exp(-tD)$. Taking into account the form degrees, this commutativity becomes
$$e^{-tD^{(k)}}\delta^{(k+1)} = \delta^{(k+1)}e^{-tD^{(k+1)}}.$$
Thus, using the isomorphism ${\delta}: {C} \xrightarrow{\sim} {L}$ ensured by the acyclicity of the general codifferential, see Lemma \ref{inverse_Q_GF}, we have the following commutative diagram in terms of the subspaces $L$ and $C$:
\[\begin{tikzcd}[column sep = huge, row sep = large]
C^{(k+1)} \arrow[r, "\exp(-tD^{(k+1)})"] \arrow[d, "\rotatebox{90}{\(\sim\)}", "\delta^{(k+1)}"']
& C^{(k+1)} \arrow[d, "\delta^{(k+1)}", "\rotatebox{90}{\(\sim\)}"'] \\
L^{(k)} \arrow[r, "\exp(-tD^{(k)})"]
& L^{(k)}
\end{tikzcd}\]
This commutative diagram allows us to eliminate the explicit restriction to $L$ in the flat super trace appearing in the definition of $\sdet({D}|_{L})$.

\begin{lemma}
\label{restricted_trace}
Let $\delta$ be an acyclic, transversal general codifferential (Definition \ref{def_codifferential_types}) such that the restricted flat trace $\tr(e^{-tD^{(k)}}\vert_{L})$ is well-defined for each $k$. Then the flat super trace reads\footnote{Note that the flat trace in the last expression below is taken of an operator \emph{not} restricted to $L$.}
$$\str\bigl(e^{-tD}\big|_{L}\bigr) \doteq \sum_{k=0}^n(-1)^k\tr\bigl(e^{-tD^{(k)}}\big|_{L^{(k)}}\bigr) = \sum_{k=0}^n(-1)^{k+1}k\,\tr\bigl(e^{-tD^{(k)}}\bigr).$$
\end{lemma}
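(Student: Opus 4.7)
The plan is that the first equality is simply the definition of the flat supertrace, so only the second equality carries content. Abbreviate $T_k \doteq \tr\bigl(e^{-tD^{(k)}}\big|_{L^{(k)}}\bigr)$ and $S_k \doteq \tr\bigl(e^{-tD^{(k)}}\big|_{C^{(k)}}\bigr)$. Since the splitting $\Omega^k(M,E) = L^{(k)} \oplus C^{(k)}$ is $D$-invariant by transversality of $\delta$, additivity of the flat trace on invariant subspaces gives $\tr(e^{-tD^{(k)}}) = T_k + S_k$, reducing the identity to a purely algebraic relation among the $T_k$ and $S_k$.

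The key structural input is the commutative square displayed just above the lemma. Commutativity is Lemma \ref{lemma_commutativity} applied to the commuting pair $D, \delta$, which gives $[e^{-tD},\delta] = 0$; degree-componentwise this reads $e^{-tD^{(k)}} \delta^{(k+1)} = \delta^{(k+1)} e^{-tD^{(k+1)}}$. The vertical arrow $\delta^{(k+1)} \colon C^{(k+1)} \to L^{(k)}$ is a bijection by the acyclicity half of Lemma \ref{inverse_Q_GF}, so conjugation invariance of the (flat) trace yields $T_k = S_{k+1}$ for $0 \leq k \leq n-1$. Two boundary vanishings complete the setup: $L^{(n)} = \im(\delta^{(n+1)}) = 0$ since $\Omega^{n+1}(M,E) = 0$, so $T_n = 0$; and acyclicity of $(\Omega^\bullet(M,E),\delta)$ in degree $0$, where $\ker\delta^{(0)} = \Omega^0(M,E)$, forces $L^{(0)} = \Omega^0(M,E)$, hence $C^{(0)} = 0$ and $S_0 = 0$.

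The remainder is a telescoping calculation. Substituting $S_k = T_{k-1}$ (with the convention $T_{-1} = 0$) into the target expression, the right-hand side becomes $\sum_{k=0}^n (-1)^{k+1} k (T_k + T_{k-1})$. Splitting this into two sums and reindexing the second via $j = k-1$, the coefficient in front of each $T_k$ for $0 \leq k \leq n-1$ is $(-1)^k[-k + (k+1)] = (-1)^k$, while the endpoint contribution at $k = n$ from the first sum vanishes thanks to $T_n = 0$. What remains is exactly $\sum_{k=0}^n (-1)^k T_k$, as required.

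The main (indeed essentially the only) obstacle is not the algebra but the tracial formalism, namely verifying additivity on a $D$-invariant direct sum and conjugation invariance under the intertwining bijection $\delta^{(k+1)}$ at the level of the \emph{flat} trace. These are standard consequences of the wavefront-set and kernel-level definition reviewed in \cite[Appendix B]{Schiavina_Stucker2}, but they require the Schwartz kernels of the restricted operators to have suitably controlled wavefront sets so that the flat traces on each summand exist—this is guaranteed by the hypothesis that $\tr\bigl(e^{-tD^{(k)}}|_L\bigr)$ is well-defined (together with the implicit well-definedness of the unrestricted flat trace appearing in the conclusion) for each $k$.
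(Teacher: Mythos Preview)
Your proof is correct and follows essentially the same approach as the paper: both hinge on the identity $T_k = S_{k+1}$ obtained from the commutative square via cyclicity/conjugation invariance of the flat trace, followed by a telescoping rearrangement. The only cosmetic differences are that the paper works forward from $\sum_k(-1)^kT_k$ by writing $T_k=(k+1)T_k-kT_k$ and reindexing (so the only boundary fact needed is the trivial $S_{n+1}=0$), whereas you work backward from the right-hand side and invoke the slightly less trivial boundary identities $T_n=0$ and $S_0=0$; both are valid and amount to the same argument.
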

\begin{proof}
By well-definedness of the restricted flat trace, see \cite[Definition B.12]{Schiavina_Stucker2}, the projection operator $\Pi_{L^{(k)}}$ induced by the splitting $\Omega^k(M,E)=L^{(k)}\oplus C^{(k)}$ is continuous and does not increase the wavefront set, i.e. $\WF(\Pi_{L^{(k)}})\subset N^*\Delta_{M\times M}$ is contained in the conormal to the diagonal in $M\times M$. The restricted flat trace is given by $\tr(e^{-tD^{(k)}}|_{L^{(k)}}) = \tr(e^{-tD^{(k)}}\Pi_{L^{(k)}})$.

Let $(\delta^{(k+1)}\vert_C)^{-1}$ denote the inverse of the isomorphism $\delta^{(k+1)}:C^{(k+1)} \xrightarrow{\sim} L^{(k)}$, which exists thanks to acyclicity, see Lemma \ref{inverse_Q_GF}. By the above commutative diagram, we then have
\begin{equation*}
\begin{split}
    e^{-tD^{(k)}} \,\Pi_{L^{(k)}} &= e^{-tD^{(k)}} \circ \delta^{(k+1)} \circ \delta^{(k+1)}|_C^{-1} \,\Pi_{L^{(k)}} \\
    &=  \delta^{(k+1)} \circ e^{-tD^{(k+1)}} \circ \delta^{(k+1)}|_C^{-1} \,\Pi_{L^{(k)}}.
\end{split}
\end{equation*}
Taking the flat trace and using cyclicity of the flat trace (see \cite[Section 4.5]{viet_dang_dynamical_torsion}), we find
\begin{equation}
\label{restricted_trace_equality}
    \tr\big(e^{-tD^{(k)}}\,\Pi_{L^{(k)}}\big) = \tr\big(e^{-tD^{(k+1)}} \delta^{(k+1)}|_C^{-1}\,\Pi_{L^{(k)}} \delta^{(k+1)} \big) = \tr\big(e^{-tD^{(k+1)}} \,\Pi_{C^{(k+1)}} \big),
\end{equation}
where we used that
$$\delta^{(k+1)}|_C^{-1}\,\Pi_{L^{(k)}} \delta^{(k+1)} = \mathrm{id} - \Pi_{L^{(k+1)}} = \Pi_{C^{(k+1)}},$$ is just the projection operator with image $C^{(k+1)}$ and kernel $L^{(k+1)}=\im(\delta^{(k+2)})$.
Note that the properties of $\Pi_{L^{(k)}}$ imply that $\Pi_{C^{(k)}} = \mathrm{id} - \Pi_{L^{(k)}}$ is a continuous projection, whose wavefront set satisfies
$$\WF(\Pi_{C^{(k)}}) \subset \WF(\mathrm{id}) \cup \WF(\Pi_{L^{(k)}}) \subset N^*\Delta_{M\times M}.$$
Thus, the flat trace on the right hand side of \eqref{restricted_trace_equality} is well-defined, and the use of cyclicity is justified.
By the definition of flat trace restricted to an invariant subspace we can write \eqref{restricted_trace_equality} simply as
$$\tr(e^{-tD^{(k)}}\big|_{L^{(k)}}) = \tr(e^{-tD^{(k+1)}}\big|_{C^{(k+1)}}).$$
Using this equality in the definition of the flat super trace, we find
\begin{equation*}
\begin{split}
    \str(e^{-tD}\big|_{L}) &= \sum_{k=0}^n(-1)^k\Bigl((k+1)\,\tr\bigl(e^{-tD^{(k)}}\big|_{L^{(k)}}\bigr) - k\,\tr\bigl(e^{-tD^{(k)}}\big|_{L^{(k)}}\bigr)\Bigr) \\
    &= \sum_{k=0}^n(-1)^k\Bigl((k+1)\,\tr\bigl(e^{-tD^{(k+1)}}\big|_{C^{(k+1)}}\bigr) - k\,\tr\bigl(e^{-tD^{(k)}}\big|_{L^{(k)}}\bigr)\Bigr) \\
    &= \sum_{k=0}^n(-1)^{k+1}\Bigl(k\,\tr\bigl(e^{-tD^{(k)}}\big|_{C^{(k)}}\bigr) + k\,\tr\bigl(e^{-tD^{(k)}}\big|_{L^{(k)}}\bigr)\Bigr) \\ 
    &= \sum_{k=0}^n(-1)^{k+1}k\,\tr\bigl(e^{-tD^{(k)}}\bigr),
\end{split}
\end{equation*}
where, in the second-to-last line, we simply reshuffled the terms in the sum.
\end{proof}

We have not yet made use of the special form of our family of general codifferentials. Using the same notation as above to denote the restriction of an operator to $E$-valued $k$-forms, the inner variation structure of Definition \ref{def_homotopy_structure} can be written
\begin{equation}
\label{homotopy_derivative_degrees}
    \frac{d}{d\tau}\delta_\tau^{(k)} = \theta_{\tau}^{(k-1)}\delta_\tau^{(k)} - \delta_\tau^{(k)}\theta_{\tau}^{(k)}.
\end{equation}
This algebraic structure will allow us to make a key simplification of a certain expression involving the $\tau$-derivative of the characteristic operator $D^{(k)}_{\tau}$.

\begin{lemma}
\label{trace_simplification_lemma}
For a smooth family of regular general codifferentials $\delta_\tau$ given by an inner variation as in Definition \ref{def_homotopy_structure}, the following holds:
\begin{equation}
\label{trace_simplification}
    \sum_{k=0}^n(-1)^k k\,\tr\Bigl(\Bigl(\frac{d}{d\tau}D^{(k)}_{\tau}\Bigr)e^{-tD^{(k)}_{\tau}}\Bigr) = \sum_{k=0}^n(-1)^k\, \tr\Bigl(\theta^{(k)}_\tau\frac{d}{dt}e^{-tD^{(k)}_{\tau}}\Bigr)
\end{equation}
\end{lemma}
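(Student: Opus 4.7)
The plan is to expand the left-hand side by substituting in the inner-variation formula for $\frac{d}{d\tau}D_\tau$, then manipulate the four resulting flat traces using cyclicity together with the commutation of the semigroup with both $\delta_\tau$ and $\d$ (Lemma \ref{lemma_commutativity}), and finally reindex the alternating sum so that it collapses into the right-hand side.

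First, I would compute
\[
\tfrac{d}{d\tau}D_\tau = [\dot{\delta}_\tau,\d] = [[\theta_\tau,\delta_\tau],\d] = \theta_\tau\delta_\tau\d - \delta_\tau\theta_\tau\d + \d\theta_\tau\delta_\tau - \d\delta_\tau\theta_\tau,
\]
so that the LHS of \eqref{trace_simplification} splits into four sums $\sum_k(-1)^k k\,(T_1^{(k)} - T_2^{(k)} + T_3^{(k)} - T_4^{(k)})$ with $T_1^{(k)} = \tr(\theta_\tau\delta_\tau\d\,e^{-tD_\tau})^{(k)}$, and $T_2, T_3, T_4$ defined analogously. The RHS, using $\frac{d}{dt}e^{-tD_\tau} = -D_\tau e^{-tD_\tau}$ and $D_\tau = \delta_\tau\d + \d\delta_\tau$, becomes $-\sum_k (-1)^k(S_1^{(k)}+S_2^{(k)})$ with $S_1^{(k)} = \tr(\theta_\tau\delta_\tau\d\,e^{-tD_\tau})^{(k)}$ and $S_2^{(k)} = \tr(\theta_\tau\d\delta_\tau\,e^{-tD_\tau})^{(k)}$.

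Next, I would use cyclicity of the flat trace (cf.\ \cite[Section 4.5]{viet_dang_dynamical_torsion}) combined with $[e^{-tD_\tau},\delta_\tau]=[e^{-tD_\tau},\d]=0$ (Lemma \ref{lemma_commutativity}) to identify each $T_i$ with a shifted $S_j$. Concretely, $T_1^{(k)} = S_1^{(k)}$ by definition; cycling $\delta_\tau$ past the remaining factors in $T_2$ and commuting it through the semigroup gives $T_2^{(k)} = S_2^{(k+1)}$; cycling $\d$ to the right in $T_3$ and then commuting it through the semigroup gives $T_3^{(k)} = S_1^{(k-1)}$; and an analogous manipulation yields $T_4^{(k)} = S_2^{(k)}$. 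The degree bookkeeping matters here: $\delta_\tau$ lowers and $\d$ raises form degree by one, which is exactly what produces the index shift in the super-traces after cyclic rearrangement.

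Finally, I would substitute these identifications and reindex. Each shift $k \mapsto k\pm 1$ absorbs a sign $-1$ from $(-1)^k$ and changes the weight $k$ accordingly (boundary terms vanish since $S_1^{(n)} = S_1^{(-1)} = S_2^{(0)} = S_2^{(n+1)} = 0$), yielding
\[
\sum_k (-1)^k k\,(T_1^{(k)}-T_2^{(k)}+T_3^{(k)}-T_4^{(k)}) = \sum_k (-1)^k\bigl[(k-(k+1))S_1^{(k)} + ((k-1)-k)S_2^{(k)}\bigr],
\]
which telescopes to $-\sum_k(-1)^k(S_1^{(k)}+S_2^{(k)})$, matching the RHS.

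The main obstacle is not algebraic but rather analytical: justifying cyclicity of the flat trace for each of the four mixed operators $\theta_\tau\delta_\tau\d\,e^{-tD_\tau}$ etc., and verifying that the semigroup commutation relations from Lemma \ref{lemma_commutativity} transfer to flat-trace identities (so that the wavefront set conditions ensure all intermediate traces are well-defined). Once this is granted, the computation is a purely combinatorial reindexing of an alternating sum.
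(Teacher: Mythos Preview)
Your proposal is correct and follows essentially the same approach as the paper's own proof: expand $\tfrac{d}{d\tau}D_\tau$ via the inner-variation formula into four terms, use cyclicity of the flat trace together with the commutation $[e^{-tD_\tau},\delta_\tau]=[e^{-tD_\tau},\d]=0$ from Lemma~\ref{lemma_commutativity} to shift the form degree in the middle two terms, then reindex so that the weighted alternating sum telescopes. Your organization (naming the target pieces $S_1^{(k)},S_2^{(k)}$ up front and matching each $T_i^{(k)}$ to a shifted $S_j$) is slightly more streamlined than the paper's presentation, but the mathematical content and the analytical inputs you flag (well-definedness and cyclicity of the flat traces, citing \cite[Section 4.5]{viet_dang_dynamical_torsion}) are identical.
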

\begin{proof}
Taking the $\tau$-derivative of $D^{(k)}_{\tau} = \delta_\tau^{(k+1)}\d^{(k)} + \d^{(k-1)}\delta_\tau^{(k)}$ and using equation \eqref{homotopy_derivative_degrees} results in
\begin{equation}
\label{trace_simplification1}
\begin{split}
    \frac{d}{d\tau}D^{(k)}_{\tau} &= \frac{d}{d\tau}\delta_\tau^{(k+1)}\d^{(k)} + \d^{(k-1)}\frac{d}{d\tau}\delta_\tau^{(k)} \\
    &= \theta^{(k)}_\tau \delta_\tau^{(k+1)}\d^{(k)} - \delta_\tau^{(k+1)}\theta^{(k+1)}_\tau \d^{(k)} + \d^{(k-1)}\theta^{(k-1)}_\tau \delta_\tau^{(k)} - \d^{(k-1)}\delta_\tau^{(k)}\theta^{(k)}_\tau.
    \end{split}
\end{equation}
Note that $\frac{d}{d\tau}D_\tau^{(k)}$ is a differential operator, so composing with $\exp(-tD_\tau^{(k)})$ does not increase the wavefront set and the flat trace in \eqref{trace_simplification} is well-defined.
Inserting equation \eqref{trace_simplification1} into the flat trace on the left hand side of \eqref{trace_simplification}, gives
\begin{equation}
\begin{split}
\label{trace_simplification2}
    &\sum_{k=0}^n(-1)^k k\,\tr\Bigl(\Bigl(\frac{d}{d\tau}D^{(k)}_{\tau}\Bigr)e^{-tD^{(k)}_{\tau}}\Bigr) \\
    &= \sum_{k=0}^n(-1)^k k\, \tr\Bigl(\theta^{(k)}_\tau \delta_\tau^{(k+1)}\d^{(k)}e^{-tD^{(k)}_{\tau}}\Bigr) - \sum_{k=0}^n(-1)^k k\, \tr\Bigl(\delta_\tau^{(k+1)}\theta^{(k+1)}_\tau \d^{(k)}e^{-tD^{(k)}_{\tau}}\Bigr) \\
    &+ \sum_{k=0}^n(-1)^k k\, \tr\Bigl(\d^{(k-1)}\theta^{(k-1)}_\tau \delta_\tau^{(k)}e^{-tD^{(k)}_{\tau}}\Bigr) - \sum_{k=0}^n(-1)^k k\, \tr\Bigl(\d^{(k-1)}\delta_\tau^{(k)}\theta^{(k)}_\tau e^{-tD^{(k)}_{\tau}}\Bigr).
\end{split}
\end{equation}
By Lemma \ref{lemma_commutativity}, $\exp(-tD_\tau)$ commutes with both ${\delta_\tau}$ and $\d$. Writing out the form degree, this becomes
$$e^{-tD_\tau^{(k)}}\delta_\tau^{(k+1)} = \delta_\tau^{(k+1)}e^{-tD_\tau^{(k+1)}}, \qquad e^{-tD_\tau^{(k)}}\d^{(k-1)} = \d^{(k-1)}e^{-tD_\tau^{(k-1)}}.$$
Note the shift in degree for the operator $\exp(-tD_\tau)$. Using this commutativity and the cyclicity of the flat trace \cite[Section 4.5]{viet_dang_dynamical_torsion}, the last three terms in \eqref{trace_simplification2} become
\begin{equation*}
\begin{split}
    \tr\Bigl(\delta_\tau^{(k+1)}\theta^{(k+1)}_\tau \d^{(k)}e^{-tD^{(k)}_{\tau}}\Bigr) &= \tr\Bigl(\theta^{(k+1)}_\tau \d^{(k)}e^{-tD^{(k)}_{\tau}}\delta_\tau^{(k+1)}\Bigr) = \tr\Bigl(\theta^{(k+1)}_\tau \d^{(k)}\delta_\tau^{(k+1)}e^{-tD^{(k+1)}_{\tau}}\Bigr) \\
    \tr\Bigl(\d^{(k-1)}\theta^{(k-1)}_\tau \delta_\tau^{(k)}e^{-tD^{(k)}_{\tau}}\Bigr) &= \tr\Bigl(\theta^{(k-1)}_\tau \delta_\tau^{(k)}e^{-tD^{(k)}_{\tau}}\d^{(k-1)}\Bigr) = \tr\Bigl(\theta^{(k-1)}_\tau \delta_\tau^{(k)}\d^{(k-1)}e^{-tD^{(k-1)}_{\tau}}\Bigr) \\
    \tr\Bigl(\d^{(k-1)}\delta_\tau^{(k)}\theta^{(k)}_\tau e^{-tD^{(k)}_{\tau}}\Bigr) &= \tr\Bigl(\theta^{(k)}_\tau e^{-tD^{(k)}_{\tau}}\d^{(k-1)}\delta_\tau^{(k)}\Bigr) = \tr\Bigl(\theta^{(k)}_\tau \d^{(k-1)}\delta_\tau^{(k)}e^{-tD^{(k)}_{\tau}}\Bigr).
\end{split}
\end{equation*}
Inserting this in \eqref{trace_simplification2}, we find
\begin{equation*}
\begin{split}
    &\sum_{k=0}^n(-1)^k k\,\tr\Bigl(\Bigl(\frac{d}{d\tau}D^{(k)}_{\tau}\Bigr)e^{-tD^{(k)}_{\tau}}\Bigr) \\
    &= \sum_{k=0}^n(-1)^k k\,\tr\Bigl(\theta^{(k)}_\tau \delta_\tau^{(k+1)}\d^{(k)}e^{-tD^{(k)}_{\tau}}\Bigr) - \sum_{k=0}^n(-1)^k k\,\tr\Bigl(\theta^{(k+1)}_\tau \d^{(k)}\delta_\tau^{(k+1)}e^{-tD^{(k+1)}_{\tau}}\Bigr) \\
    &+ \sum_{k=0}^n(-1)^k k\,\tr\Bigl(\theta^{(k-1)}_\tau \delta_\tau^{(k)}\d^{(k-1)}e^{-tD^{(k-1)}_{\tau}}\Bigr) - \sum_{k=0}^n(-1)^k k\,\tr\Bigl(\theta^{(k)}_\tau \d^{(k-1)}\delta_\tau^{(k)}e^{-tD^{(k)}_{\tau}}\Bigr) \\
    &= \sum_{k=0}^n(-1)^{k+1}\bigl((k+1)-k\bigr)\tr\Bigl(\theta^{(k)}_\tau \delta_\tau^{(k+1)}\d^{(k)}e^{-tD^{(k)}_{\tau}}\Bigr) \\
    &+ \sum_{k=0}^n(-1)^{k+1}\bigl(k-(k-1)\bigr)\tr\Bigl(\theta^{(k)}_\tau \d^{(k-1)}\delta_\tau^{(k)}e^{-tD^{(k)}_{\tau}}\Bigr) \\
    &= \sum_{k=0}^n(-1)^{k+1}\tr\Bigl(\theta^{(k)}_\tau D^{(k)}e^{-tD^{(k)}_{\tau}}\Bigr) = \sum_{k=0}^n(-1)^{k}\tr\Bigl(\theta^{(k)}_\tau \frac{d}{dt}e^{-tD^{(k)}_{\tau}}\Bigr),
\end{split}
\end{equation*}
where, in the second equality, we relabeled the terms in the second and third sums and then combined the terms in the first and third sums and in the second and fourth sums respectively.
\end{proof}

We now study the $\tau$-derivative of the distribution $\str\bigl(e^{-t{D_\tau}}\big|_{L_\tau}\bigr) \in \D(\R_+)$. We would like to pass the $\tau$-derivative inside the flat trace so we can use Duhamel's formula (Lemma \ref{duhamel_lemma}). This is possible thanks to Assumption \ref{assumption_diff_hormander}. We can then use the previous results, in particular Lemma \ref{trace_simplification_lemma}, to obtain a simple expression for the $\tau$-derivative of the flat super trace when the family of general codifferentials has the structure specified in Definition \ref{def_homotopy_structure}. We remark that Assumption \ref{assumption_smaller_wavefront} is used to commute the flat trace with the integral appearing in \eqref{duhamel}.

\begin{lemma}
\label{lemma_tau_to_t}
For a smooth inner variation of regular general codifferentials ${\delta_\tau}$ (Definition \ref{def_homotopy_structure}) satisfying Assumptions \ref{assumption_diff_semigroup}, \ref{assumption_diff_hormander} and \ref{assumption_smaller_wavefront}, we have
\begin{equation}
\label{derivative_of_trace}
    \frac{d}{d\tau}\pair{\str\Bigl(e^{-t{D_\tau}}\big|_{L_\tau}\Bigr)}{\chi} = - \pair{\str\Bigl(\theta_\tau e^{-t{D_\tau}}\Bigr)}{\frac{d}{dt}\bigl(t\chi\bigr)},
\end{equation}
for any $\chi \in \C_c(\R_+)$, where $\theta_\tau$ denotes the operator from Definition \ref{def_homotopy_structure}.
\end{lemma}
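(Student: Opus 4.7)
The plan is to combine the algebraic reformulation of the restricted flat super trace (Lemma \ref{restricted_trace}) with Duhamel's formula (Lemma \ref{duhamel_lemma}) and the combinatorial identity of Lemma \ref{trace_simplification_lemma}, with the analytical inputs provided by the three assumptions playing distinct roles: Assumption \ref{assumption_diff_semigroup} legitimizes Duhamel's formula, Assumption \ref{assumption_diff_hormander} legitimizes the exchange of $\frac{d}{d\tau}$ with the flat trace, and Assumption \ref{assumption_smaller_wavefront} legitimizes the distributional manipulations on $\R_+^2 \times M^2$.

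First, I would rewrite
\[
\str(e^{-tD_\tau}|_{L_\tau}) = \sum_{k=0}^n(-1)^{k+1}k\,\tr(e^{-tD_\tau^{(k)}})
\]
using Lemma \ref{restricted_trace}, so that no explicit projection onto $L_\tau$ appears. Since the flat trace $\tr\colon \D'_\Gamma(\R_+\times M^2)\to \D'(\R_+)$ is sequentially continuous in the H\"ormander topology, and since by Assumption \ref{assumption_diff_hormander} the Schwartz kernel $K_\tau$ of $e^{-tD_\tau}$ is $\tau$-differentiable in this topology, I may pass $\frac{d}{d\tau}$ inside the flat trace. Duhamel's formula (Lemma \ref{duhamel_lemma}, applicable by Assumption \ref{assumption_diff_semigroup}) then gives
\[
\tfrac{d}{d\tau}e^{-tD_\tau^{(k)}} = -\int_0^t e^{-(t-u)D_\tau^{(k)}}\,\dot D_\tau^{(k)}\,e^{-uD_\tau^{(k)}}\,du.
\]

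Next comes the key analytical step. Viewing the integrand as the Schwartz kernel of a distribution on $\R_+^2\times M^2$ in the variables $(t,u,x,y)$, Assumption \ref{assumption_smaller_wavefront} ensures that its wavefront set avoids the conormal bundle of the extended diagonal map $\tilde\iota(t,u,x)=(t,u,x,x)$ defined in \eqref{e_diagonal}. This is exactly the requirement for the flat trace to be well-defined in the variables $(t,u)$ and to commute with the $u$-integration; the directional (``forward vs.\ backward'') asymmetry built into Assumption \ref{assumption_smaller_wavefront} is what rules out the covectors that would otherwise obstruct the composition in the common $x$-variable. Once commutation is justified, cyclicity of the flat trace (also legitimate under the same wavefront condition, cf.\ \cite[Section 4.5]{viet_dang_dynamical_torsion}) yields
\[
\tr\bigl(e^{-(t-u)D_\tau^{(k)}}\dot D_\tau^{(k)}e^{-uD_\tau^{(k)}}\bigr) = \tr\bigl(\dot D_\tau^{(k)}e^{-tD_\tau^{(k)}}\bigr),
\]
which is independent of $u$, so integration over $u\in[0,t]$ produces the distributional identity
\[
\tr\bigl(\tfrac{d}{d\tau}e^{-tD_\tau^{(k)}}\bigr) = -t\,\tr\bigl(\dot D_\tau^{(k)}e^{-tD_\tau^{(k)}}\bigr).
\]

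Assembling these pieces, the $\tau$-derivative of the pairing reads
\[
\frac{d}{d\tau}\bigl\langle \str(e^{-tD_\tau}|_{L_\tau}),\chi\bigr\rangle
= \Bigl\langle \sum_{k=0}^n(-1)^k k\,\tr\bigl(\dot D_\tau^{(k)}e^{-tD_\tau^{(k)}}\bigr),\,t\chi\Bigr\rangle.
\]
I then apply the inner-variation identity of Lemma \ref{trace_simplification_lemma} to rewrite the $k$-weighted sum as $\sum_k(-1)^k\tr(\theta_\tau^{(k)}\frac{d}{dt}e^{-tD_\tau^{(k)}}) = \str(\theta_\tau\frac{d}{dt}e^{-tD_\tau})$, and finally integrate by parts in $t$ against $t\chi\in C_c^\infty(\R_+)$ to move $\frac{d}{dt}$ onto the test function, yielding exactly \eqref{derivative_of_trace}. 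The main obstacle is the wavefront bookkeeping in the middle paragraph: one must verify that the Duhamel integrand, as a distribution on the enlarged space $\R_+^2\times M^2$, satisfies the wavefront hypotheses needed both for $\tilde\iota^*$-pullback and for cyclicity; everything else is either a direct invocation of an earlier lemma or an integration by parts.
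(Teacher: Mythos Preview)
Your proposal is correct and follows essentially the same approach as the paper's proof: rewrite via Lemma \ref{restricted_trace}, differentiate under the flat trace using Assumption \ref{assumption_diff_hormander}, apply Duhamel's formula, use Assumption \ref{assumption_smaller_wavefront} to justify commuting the flat trace with the $u$-integral (via a wavefront set computation for the Duhamel integrand on $\R_+^2\times M^2$), then invoke cyclicity, Lemma \ref{trace_simplification_lemma}, and integrate by parts. The paper makes the wavefront step more explicit by writing out the conic set $\Lambda_\tau$ bounding $\WF(\Phi_\tau)$ and checking it is disjoint from $N^*\tilde\iota$, but this is exactly the verification you flag as ``the main obstacle.''
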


\begin{proof}
We will first use Lemma \ref{restricted_trace} to rewrite the left hand side of \eqref{derivative_of_trace} without the explicit restriction to $L_\tau$. Then, by Assumption \ref{assumption_diff_hormander} the map $\tau \to e^{-tD_\tau}$ is differentiable with respect to the H\"ormander topology on $\D_\Gamma(\R_+\times M^2)$, where we identify $e^{-tD_\tau}$ with its Schwartz kernel $K_\tau$ viewed as a distribution on $\R_+\times M^2$. That is, the difference quotients $\frac{1}{h}(\exp(-tD_{\tau+h} - \exp(-tD_\tau))$ converge in $\D_\Gamma(\R_+\times M^2)$ as $h\to 0$. Here, $\Gamma$ is a closed conic set disjoint from the conormal $N^*\iota$ to the map $\iota: (t,x) \in \R_+\times M \to (t,x,x) \in \R_+\times M^2$. Since the flat trace defines a sequentially continuous linear functional $\tr:\D_\Gamma(\R_+\times M^2) \to \D(\R_+)$ (see \cite[Proposition B.3]{Schiavina_Stucker2}), Assumption \ref{assumption_diff_hormander} allows us to take the $\tau$-derivative inside the flat trace. Indeed, using Lemma \ref{restricted_trace}, we have
\begin{equation}
\label{trace_manipulation}
\begin{split}
    \frac{d}{d\tau} \pair{\str\left(e^{-t{D_\tau}}\big|_{L_\tau}\right)}{\chi} &= \sum_{k=0}^n (-1)^{k+1}k\, \frac{d}{d\tau}\pair{\tr(e^{-tD^{(k)}_{\tau}})}{\chi} \\
    &= \sum_{k=0}^n (-1)^{k+1}k\, \lim_{h\to 0} \frac{1}{h} \pair{\tr\Bigl(e^{-tD^{(k)}(\tau+h)} - e^{-tD^{(k)}_{\tau}}\Bigr)}{\chi} \\
    &= \sum_{k=0}^n (-1)^{k+1}k\, \pair{\tr\Bigl(\lim_{h\to 0}\frac{1}{h} \big(e^{-tD^{(k)}(\tau+h)} - e^{-tD^{(k)}_{\tau}}\big)\Bigr)}{\chi} \\
    &= \sum_{k=0}^n (-1)^{k+1}k\, \pair{\tr\Bigl( \frac{d}{d\tau}e^{-tD^{(k)}_{\tau}}\Bigr)}{\chi}.
\end{split}
\end{equation}

Using the formula for the derivative of a family of semigroups from Lemma \ref{duhamel_lemma}, we have
\begin{equation}
\label{duhamel_degrees}
    \frac{d}{d\tau}e^{-t{D^{(k)}_\tau}} = -\int_0^t e^{-(t-u){D^{(k)}_\tau}}\left(\frac{d}{d\tau}{D^{(k)}_\tau}\right)e^{-u{D^{(k)}_\tau}}\,du
\end{equation}
We will now make use of Assumption \ref{assumption_smaller_wavefront} to exchange the flat trace and the integral over $u$. To this end, we denote by $\Phi_\tau$ the Schwartz kernel of the operator
\begin{equation}
\label{Phi_def}
    e^{-(t-u){D_\tau}}\left(\frac{d}{d\tau}{D_\tau}\right)e^{-u{D_\tau}}.
\end{equation}
We can view $\Phi_\tau$ as a distribution in\footnote{Here and in what follows, we suppress the bundle in which $\Phi_\tau$ takes its values from our notation since it is not pertinent for wavefront considerations. Actually, $\Phi_\tau^{(k)} \in \D\bigl(\Omega\times M^2;\pi_{M^2}^*((\wedge^kT^*M\otimes E)\boxtimes(\wedge^kT^*M\otimes E)^*\otimes\wedge^nT^*M)\bigr)$, where $\pi_{M^2}$ is the projection from $\Omega\times M^2$ onto $M^2$.} $\D(\Omega\times M^2)$, where $\Omega = \{(t,u) \in \R \,|\, 0<u<t\}$. Note that the operator in \eqref{Phi_def} is strongly continuous on $\Omega^\bullet(M,E)$ with respect to $(t,u)$ in the closure $\overline{\Omega}$. Thus, the pushforward $\pi_*\Phi_\tau \in \D(\R_+\times M^2)$ by the projection map
$$\pi: (t,u,x,y) \in \Omega\times M^2 \to (t,x,y) \in \R_+\times M^2$$
is well-defined and formula \eqref{duhamel_degrees} can be written in terms of Schwartz kernels as
\begin{equation}
\label{Phi_K_relation}
    \frac{d}{d\tau}K_\tau^{(k)} = - \pi_* \Phi_\tau^{(k)}.
\end{equation}
Thus, by definition of the flat trace, we have
\begin{equation}
\label{pullback_pushforward}
    \pair{\tr\Bigl( \frac{d}{d\tau}e^{-tD^{(k)}_{\tau}}\Bigr)}{\chi} = \pair{\mathrm{Tr}\Bigl(\iota^*\frac{d}{d\tau}K_\tau^{(k)}\Bigr)}{\chi\otimes 1} = -\pair{\mathrm{Tr}\Bigl(\iota^*\pi_*\Phi_\tau^{(k)}\Bigr)}{\chi\otimes 1},
\end{equation}
where $\mathrm{Tr}$ denotes the fiberwise trace in the bundle $(\wedge^kT^*M\otimes E)\otimes(\wedge^kT^*M\otimes E)^*$ and $\iota^*$ is the pullback by the diagonal map
$$\iota: (t,x) \in \R_+\times M \to (t,x,x) \in \R_+\times M^2.$$

We now show that we can in some sense switch the order of the pullback and the pushforward in the above expression. This will be achieved by controlling the wavefront set of $\Phi_\tau$. To this end, we express $\Phi_\tau$ in terms of $K_\tau$ using fundamental operations on distributions, that is the pullback, pushforward and tensor product. Defining the following maps:
\begin{align*}
    &\pi_2: (t,u,x,y) \in \Omega\times M^2 \to (u,x,y) \in \R_+\times M^2, \\
    &f: (t,u,x,y) \in \Omega\times M^2 \to (t-u,x,y) \in \R_+\times M^2, \\
    &g: (t,u,x,z,y) \in \Omega\times M^3 \to (t,u,x,z,t,u,z,y) \in (\Omega\times M^2)^2, \\
    &h: (t,u,x,z,y) \in \Omega\times M^3 \to (t,u,x,y) \in \Omega\times M^2,
\end{align*}
we see from \eqref{Phi_def} that $\Phi_\tau$ can be written as
\begin{equation}
\label{Phi_through_K}
    \Phi_\tau = h_*g^*(f^*K_\tau \otimes \dot{D}_\tau \pi_2^*K_\tau).
\end{equation}
Using the wavefront set mapping properties of these fundamental operations, see for instance \cite{brouder}, we find that 
$$\WF(\Phi_\tau) \subset \Lambda_\tau,$$
where $\Lambda_\tau \subset T^*(\Omega\times M\times M)$ is the conic set
\begin{equation}
\label{wavefront_duhamel}
\begin{split}
    \Lambda_\tau = &\bigl\{(t,\theta,u,\nu,x,\xi,y,\eta) \,|\, \exists\, (z,\omega) \in T^*M \,\,\mathrm{s.t.}\,\, (t-u,\theta,x,\xi,z,-\omega)\in\WF(K_\tau),\\ 
    &\hspace{5.55cm} \mathrm{and}\,\, (u,\theta+\nu,z,\omega,y,\eta)\in\WF(K_\tau)\bigr\} \\
    \cup\,&\bigl\{(t,\theta,u,-\theta,x,\xi,y,0) \,|\, \exists z \in M \,\,\mathrm{s.t.}\,\, (t-u,\theta,x,\xi,z,0)\in\WF(K_\tau)\bigr\} \\
    \cup\,&\bigl\{(t,0,u,\theta,x,0,y,\eta) \,|\, \exists z \in M \,\,\mathrm{s.t.}\,\, (u,\theta,z,0,y,\eta)\in\WF(K_\tau)\bigr\}.
\end{split}
\end{equation}

Assumption \ref{assumption_smaller_wavefront} ensures that for each $\tau$, we have $\Lambda_\tau$ disjoint from
$$N^*\tilde{\iota} = \bigl\{(t,0,u,0,x,\xi,x,-\xi) \,|\, (t,u)\in\Omega, (x,\xi)\in T^*M\bigr\},$$
the conormal bundle to the diagonal map
$$\tilde{\iota}: (t,u,x) \in \Omega\times M \to (t,u,x,x) \in \Omega\times M^2.$$
Thus, the pullback $\tilde{\iota}^*\Phi_\tau$ is well-defined for each $\tau$. We further define the projection map
$$\tilde{\pi}: (t,u,x) \in \Omega\times M \to (t,x) \in \R_+ \times M.$$
Now, on the space of compactly supported smooth functions, $f\in \C_c(\Omega\times M^2)$, we have
$$\iota^*\pi_*f = \int_0^t f(t,u,x,x)\,du = \tilde{\pi}_*\tilde{\iota}^*f.$$
Since $\C_c(\Omega\times M^2)$ is dense in $\D_{\Lambda_\tau}(\Omega\times M^2)$ and both operations $\iota^*\circ\pi_*$ and $\tilde{\pi}_*\circ\tilde{\iota}^*$ are sequentially continuous with respect to the H\"ormander topology, the commutativity $\iota^*\circ\pi_* = \tilde{\pi}_*\circ\tilde{\iota}^*$ extends to $\D_{\Lambda_\tau}(\Omega\times M^2)$ for each $\tau$. In particular, we have
$$\iota^*\pi_*\Phi_\tau^{(k)} = \tilde{\pi}_*\tilde{\iota}^*\Phi_\tau^{(k)}.$$
Applying this in \eqref{pullback_pushforward}, recalling the definition of $\Phi_\tau^{(k)}$ and using the cyclicity of the flat trace (see \cite[Section 4.5]{viet_dang_dynamical_torsion}), we find
\begin{equation*}
\begin{split}
    \pair{\tr\Bigl( \frac{d}{d\tau}e^{-tD^{(k)}_{\tau}}\Bigr)}{\chi} &= -\pair{\mathrm{Tr}\bigl(\tilde{\pi}_*\tilde{\iota}^*\Phi_\tau^{(k)}\Bigr)}{\chi\otimes 1} = -\pair{\int_0^t\tr\Bigl(e^{-(t-u)D^{(k)}_{\tau}}\big(\frac{d}{d\tau}D^{(k)}_{\tau}\big)e^{-uD^{(k)}_{\tau}}\Bigr)\,du}{\chi} \\
    &= -\pair{\int_0^t\tr\Bigl(\Bigl(\frac{d}{d\tau}D^{(k)}_{\tau}\Bigr)e^{-tD^{(k)}_{\tau}}\Bigr)\,du}{\chi} = -\pair{t\,\tr\Bigl(\Bigl(\frac{d}{d\tau}D^{(k)}_{\tau}\Bigr)e^{-tD^{(k)}_{\tau}}\Bigr)}{\chi}.
\end{split}
\end{equation*}

Inserting the above expression into \eqref{trace_manipulation} and making use of Lemma \ref{trace_simplification_lemma}, we arrive at \eqref{derivative_of_trace}:
\begin{equation}
\begin{split}
    \frac{d}{d\tau} \pair{\str\left(e^{-t{D_\tau}}\big|_{L_\tau}\right)}{\chi} &= \sum_{k=0}^n (-1)^kk\,\pair{\tr\Bigl(\Bigl(\frac{d}{d\tau}D^{(k)}_{\tau}\Bigr)e^{-tD^{(k)}_{\tau}}\Bigr)}{t\chi} \\
    &= \sum_{k=0}^n (-1)^k\,\pair{\tr\Bigl(\theta_{\tau}\frac{d}{dt}e^{-tD^{(k)}_{\tau}}\Bigr)}{t\chi} \\
    &= \sum_{k=0}^n (-1)^{k+1}\,\pair{\tr\Bigl(\theta_{\tau}e^{-tD^{(k)}_{\tau}}\Bigr)}{\frac{d}{dt}(t\chi)} \\
    &= -\pair{\str\bigl(\theta_{\tau}e^{-t{D_\tau}}\bigr)}{\frac{d}{dt}(t\chi)}.
\end{split}
\end{equation}
Here, we moved the factor of $t$ multiplying the distribution on $\R_+$ over to the compactly supported smooth function $\chi$ and applied Lemma \ref{trace_simplification_lemma}. In the third equality we moved the $t$ derivative out of the flat trace and performed a partial integration, in the distributional sense. Exchanging the order of $t$ derivative and flat trace is justified by the sequential continuity of $\frac{d}{dt}$ with respect to the H\"ormander topology on $\D_\Gamma(\R_+\times M\times M)$. More precisely, $\frac{d}{dt}$ and $\str$ commute for smooth Schwartz kernels, where the flat trace is just integration over the diagonal in $M\times M$. Approximating a distributional Schwartz kernel by smooth functions and using sequentially continuity of $\str$ and $\frac{d}{dt}$ shows that the commutativity remains true in this situation.
\end{proof}

\begin{rmk}
The proof of the preceding lemma shows that we could replace assumptions \ref{assumption_diff_hormander} and \ref{assumption_smaller_wavefront} by the following single assumption:
\begin{itemize}
    \item for each $\tau_0\in(-1,1)$ there exists a closed conic set $\Gamma \subset T^*(\R_+\times M\times M)$ satisfying
    \begin{equation}
    \label{smaller_WF_property}
        (t,0,x,\xi,y,-\eta)\in\Gamma \implies (s,0,y,\eta,x,-\xi)\notin\Gamma, \quad \forall s\in\R_+,
    \end{equation}
    such that $\WF(K_\tau)\subset\Gamma$ and $\tau \to K_\tau$ is continuous in a neighborhood of $\tau_0$ with respect to the H\"ormander topology on $\D_\Gamma(\R_+\times M \times M)$.
\end{itemize}
Note that \eqref{smaller_WF_property} in particular implies $\Gamma\cap N^*\iota=\emptyset$, i.e. this is a stricter condition on $\WF(K_\tau)$ than merely requiring $\WF(K_\tau)\cap N^*\iota=\emptyset$. In other words, we can replace the condition of differentiability in the larger space of distributions from Assumption \ref{assumption_diff_hormander} by merely requiring continuity, but in the smaller space $\D_\Gamma(\R_+\times M\times M)$ with $\Gamma$ as above.
Indeed, by \eqref{Phi_K_relation}, we can write the difference quotient as
\begin{equation}
\label{diff_quotient_as_integral}
    \frac{1}{h}(K_{\tau+h} - K_\tau) = - \frac{1}{h}\int_{\tau}^{\tau+h} \pi_*\Phi_\sigma\,d\sigma.
\end{equation}
The proof of lemma \ref{lemma_tau_to_t}, in particular \eqref{wavefront_duhamel}, shows that $\WF(\pi_*\Phi_\sigma) \subset \pi_*\Lambda$, where
\begin{align*}
    \pi_*\Lambda = &\{(t,\theta,x,\xi,y,\eta) \,|\, \exists\, u<t, (z,\omega) \in T^*M \,\,\mathrm{s.t.}\,\, (t-u,\theta,x,\xi,z,-\omega)\in\Gamma,\, (u,\theta,z,\omega,y,\eta)\in\Gamma\} \\
    \cup&\{(t,\theta,x,\xi,y,0) \,|\, \exists u<t, z \in M \,\,\mathrm{s.t.}\,\, (t-u,\theta,x,\xi,z,0)\in\Gamma\} \\
    \cup&\{(t,\theta,x,0,y,\eta) \,|\, \exists u<t, z \in M \,\,\mathrm{s.t.}\,\, (u,\theta,z,0,y,\eta)\in\Gamma\}.
\end{align*}
Property \eqref{smaller_WF_property} of $\Gamma$ ensures that $\pi_*\Phi_\sigma$ is disjoint from $N^*\iota$ for $\sigma$ in a neighborhood of $\tau_0$. Moreover, since $\Phi_\sigma$ is built from $K_\sigma$ using pullback, pushforward and tensor product, see \eqref{Phi_through_K}, the continuity of these fundamental operations with respect to the H\"ormander topology, see \cite{brouder}, together with the assumed continuity of $\sigma\to K_\sigma \in \D_\Gamma(\R_+\times M\times M)$, implies that $\sigma \to \pi_*\Phi_\sigma$ is continuous with respect to the H\"ormander topology on $\D_{\pi_*\Lambda}(\R_+\times M\times M)$. In particular, $\pi_*\Phi_\sigma$ is bounded in the Hörmander seminorms for the conic set $\pi_*\Lambda$. It follows from \eqref{diff_quotient_as_integral} that for each Hörmander seminorm $\|\cdot\|_{N,\phi,V}$, we have
$$\|\tfrac{1}{h}(K_{\tau+h} - K_\tau)\|_{N,\phi,V} \le \sup_{\sigma\in[\tau,\tau+h]}\|\pi_*\Phi_\sigma\|_{N,\phi,V} < \infty,$$
Thus, as $h\to 0$ the difference quotient converges in the H\"ormander topology on $\D_{\pi_*\Lambda}(\R_+\times M\times M)$, see \cite[Definition 8.2.2]{hormander1}, establishing Assumption \ref{assumption_diff_hormander} for the conic set $\pi_*\Lambda$.
\end{rmk}

We now turn to the study of the function $F(\tau,\lambda,s)$ used in the definition of the flat superdeterminant, see \eqref{det_from_F}. We show that under Assumption \ref{assumption_G_convergencce} and for large enough $\Re(\lambda)$, $\Re(s)$ its derivative can be expressed in terms of the function $G(\tau,\lambda, s)$ defined in \eqref{G_function}.

\begin{lemma}\label{lem:F'intermsofG}
For a smooth inner variation of regular general codifferentials ${\delta_\tau}$ satisfying Assumptions \ref{assumption_diff_semigroup}, \ref{assumption_diff_hormander}, \ref{assumption_smaller_wavefront} and \ref{assumption_G_convergencce}, we have
\begin{equation}
\label{F_derivative}
    \frac{d}{d\tau}F(\tau,\lambda,s) = \lambda\frac{1}{\Gamma(s)}G(\tau,\lambda,s+1) - \frac{s}{\Gamma(s)}G(\tau,\lambda,s),
\end{equation}
for all $\tau\in(-1,1)$ and $(\lambda,s) \in Z$, where $Z \subset \mathbb{C}^2$ is the domain from Assumption \ref{assumption_G_convergencce}. Moreover, the right hand side is analytic in $(\lambda,s) \in Z$ and continuous in $\tau$.
\end{lemma}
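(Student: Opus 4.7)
The plan is to apply Lemma~\ref{lemma_tau_to_t} with the compactly supported smooth test function $\chi(t) = \chi_N(t)\, t^{s-1} e^{-\lambda t}$ and then pass to the limit $N\to\infty$. For each fixed $N\in\mathbb{N}$ and $(\lambda,s)\in Z$ the lemma yields
\[
    \frac{d}{d\tau}\bigl\langle \str(e^{-tD_\tau}\vert_{L_\tau}),\, \chi_N t^{s-1} e^{-\lambda t}\bigr\rangle = -\bigl\langle \str(\theta_\tau e^{-tD_\tau}),\, \tfrac{d}{dt}(t^s e^{-\lambda t}\chi_N)\bigr\rangle,
\]
and expanding the $t$-derivative as
\[
    \tfrac{d}{dt}(t^s e^{-\lambda t}\chi_N) = s\, t^{s-1}e^{-\lambda t}\chi_N - \lambda\, t^s e^{-\lambda t}\chi_N + t^s e^{-\lambda t}\chi_N'
\]
splits the right-hand side into three pieces. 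Noting that $(\lambda,s+1)\in Z$ whenever $(\lambda,s)\in Z$, Assumption~\ref{assumption_G_convergencce} (with $f\equiv 1$) shows that the first two pieces converge locally uniformly in $(\tau,\lambda,s)$ to $-sG(\tau,\lambda,s)$ and $\lambda G(\tau,\lambda,s+1)$ respectively. The same locally uniform convergence, together with the defining relation of $F$, lets me interchange $\frac{d}{d\tau}$ with $\lim_N$ on the left-hand side, producing $\Gamma(s)\tfrac{d}{d\tau}F(\tau,\lambda,s)$ in the limit.

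The main obstacle is to show that the boundary contribution $\langle \str(\theta_\tau e^{-tD_\tau}), t^s e^{-\lambda t}\chi_N'\rangle$ vanishes as $N\to\infty$. The derivative $\chi_N'$ is concentrated on a shrinking neighborhood of $t=0$ (where it resembles a bump of width $\sim 1/N$ and height $\sim N$) and a bump near $t=N$; for $(\lambda,s)\in Z$ the weight $t^s e^{-\lambda t}$ vanishes at both these regimes (at $0^+$ since $\Re(s)>C_2$, and at $\infty$ since $\Re(\lambda)>C_1>0$). For a smooth test function, a one-step integration by parts reduces the pairing to the boundary evaluations of $t^s e^{-\lambda t}$, which vanish. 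In the distributional setting here, the corresponding vanishing is extracted from the locally uniform convergence in Assumption~\ref{assumption_G_convergencce} by comparing the limit obtained with the cutoff $\chi_N$ to the limit with a shifted cutoff $\chi_{N+1}$ (both giving the same $G$), so that the ``outer-region'' differences that make up $\chi_N'$ are controlled; the thresholds $C_1, C_2$ defining $Z$ are to be read as precisely large enough to ensure this suppression.

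Assembling the three limits gives
\[
    \Gamma(s)\,\tfrac{d}{d\tau}F(\tau,\lambda,s) = -sG(\tau,\lambda,s) + \lambda G(\tau,\lambda,s+1),
\]
which is equation \eqref{F_derivative} after dividing by $\Gamma(s)$. The analyticity of the right-hand side on $Z$ and its continuity in $\tau$ are inherited from each pre-limit expression $\langle \str(\theta_\tau e^{-tD_\tau}), t^{s-1}e^{-\lambda t}\chi_N\rangle$, which is holomorphic in $(\lambda,s)$ (as the pairing of a fixed distribution with a parameter-holomorphic compactly supported test function) and continuous in $\tau$ (via Assumption~\ref{assumption_diff_hormander} applied to $K_\tau$, combined with smoothness of $\theta_\tau$), and these properties pass to $G$ on $(-1,1)\times Z$ by the locally uniform convergence in Assumption~\ref{assumption_G_convergencce}.
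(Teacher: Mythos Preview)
Your overall strategy matches the paper's proof: apply Lemma~\ref{lemma_tau_to_t} with $\chi=\chi_N t^{s-1}e^{-\lambda t}$, expand $\frac{d}{dt}(t^s e^{-\lambda t}\chi_N)$ into three pieces, and pass to the limit using Assumption~\ref{assumption_G_convergencce}. The analyticity/continuity discussion and the interchange of $\frac{d}{d\tau}$ with $\lim_N$ are also handled the same way.

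The gap is in your treatment of the boundary term $\langle \str(\theta_\tau e^{-tD_\tau}),\,t^s e^{-\lambda t}\chi_N'\rangle$. Your proposed argument---comparing the limit along $\chi_N$ with the limit along $\chi_{N+1}$---does not work: the difference $\chi_{N+1}-\chi_N$ has nothing to do with $\chi_N'$, and the fact that both sequences define the same $G$ gives no control over the derivative of the cutoff. Nor does the heuristic ``boundary evaluations of $t^s e^{-\lambda t}$ vanish'' suffice, since $\str(\theta_\tau e^{-tD_\tau})$ is only a distribution in $t$ and could in principle be large on $\supp(\chi_N')$. The paper's mechanism is more concrete and uses the full strength of Assumption~\ref{assumption_G_convergencce}, in particular the presence of the arbitrary bounded smooth function $f$. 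One first fixes the cutoffs so that $|\dot\chi_N|\le CN$ on $(0,1/N)$ and $|\dot\chi_N|\le C$ on $(N,\infty)$, and picks a subsequence $\chi_{M_N}$ with $\supp(\dot\chi_N)\subset\{\chi_{M_N}=1\}$. Then for a small $\epsilon>0$ with $(\lambda-\epsilon,s-\epsilon)\in Z$ one writes
\[
t^s e^{-\lambda t}\dot\chi_N(t)=N^{-\epsilon}\,t^{(s-\epsilon)-1}e^{-\lambda t}\chi_{M_N}(t)f_1(t)+e^{-\epsilon N}\,t^{s}e^{-(\lambda-\epsilon)t}\chi_{M_N}(t)f_2(t)
\]
with $f_1,f_2\in C^\infty_b(\R_+)$. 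Each piece is now exactly of the form covered by Assumption~\ref{assumption_G_convergencce}, so the pairings are bounded uniformly in $N$, and the explicit prefactors $N^{-\epsilon}$ and $e^{-\epsilon N}$ force the whole boundary term to zero. This is the missing idea; once you supply it, your argument becomes the paper's.
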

\begin{proof}
Recall that $F$ is given as the limit
$$F(\tau,\lambda,s) = \frac{1}{\Gamma(s)}\lim_{N\to\infty}\pair{\str\left(e^{-t{D_\tau}}\big|_{L_\tau}\right)}{t^{s-1}e^{-t\lambda}\chi_N(t)},$$
where the cutoff functions $\chi_N \in \C_c(\R_+)$ satisfy $0\leq\chi_N\leq 1$ and $\chi_N(t)=1$ for $t\in[\frac{1}{N},N]$. In addition, we can choose the $\chi_N$ such that
$$|\dot{\chi}_N(t)| \le C, \quad \mathrm{for}\,\, t>N, \qquad |\dot{\chi}_N(t)| \le CN, \quad \mathrm{for}\,\, t<\frac{1}{N},$$
for some constant $C$. By Lemma \ref{lemma_tau_to_t} we have
\begin{equation}
\label{trace_pre_limit}
\begin{split}
    \frac{d}{d\tau}\pair{\str\left(e^{-t{D_\tau}}\big|_{L_\tau}\right)}{t^{s-1}e^{-t\lambda}\chi_N(t)} = &- \pair{\str\left(\theta_\tau e^{-t{D_\tau}}\right)}{\frac{d}{dt}\Bigl(t^s e^{-t\lambda}\chi_N(t)\Bigr)} \\
    = &- s \pair{\str\left(\theta_\tau e^{-t{D_\tau}}\right)}{t^{s-1} e^{-t\lambda}\chi_N(t)} \\
    &+ \lambda \pair{\str\left(\theta_\tau e^{-t{D_\tau}}\right)}{t^s e^{-t\lambda}\chi_N(t)} \\
    &- \pair{\str\left(\theta_\tau e^{-t{D_\tau}}\right)}{t^s e^{-t\lambda}\dot{\chi}_N(t)}
\end{split}
\end{equation}

Thanks to Assumption \ref{assumption_G_convergencce}, the last term in equation \eqref{trace_pre_limit} vanishes in the limit $N\to\infty$ when $(\lambda,s)\in Z$. Indeed, the derivative of the cutoff function $\chi_N$ is identically zero in $t\in[\frac{1}{N},N]$. By our choice of cutoff functions above, we further have
$$\bigl|t^se^{-\lambda t}\dot{\chi}_N(t)\bigr| \le CN\bigl|t^{s} e^{-\lambda t}\bigl| \le C \bigl|t^{s-1} e^{-\lambda t}\bigl| \le \frac{C}{N^\epsilon}\bigl|t^{s-1-\epsilon}e^{-\lambda t}\bigl| \quad\text{for}\quad t\in\bigl(0,\tfrac{1}{N}\bigr),$$
and
$$\bigl|t^se^{-\lambda t}\dot{\chi}_N(t)\bigl| \le C e^{-\epsilon N} \bigl|t^s e^{-(\lambda-\epsilon)t}\bigl| \quad\text{for}\quad t\in(N,\infty).$$
Here, $\epsilon > 0$ is chosen small enough so that $(\lambda-\epsilon,s-\epsilon)$ is still contained in the open set $Z$. We can further choose a subsequence $\{\chi_{M_N}\}$ of $\{\chi_N\}$, such that $\supp(\dot\chi_N) \subset \{t\in\R_+ \,|\, \chi_{M_N}(t)=1\}$ for all $N$.
Thus, $t^se^{-\lambda t}\dot{\chi}_N(t)$ can be written as the sum of two terms:
$$t^se^{-\lambda t}\dot{\chi}_N(t) = N^{-\epsilon}\,t^{s'-1}e^{-\lambda t}\chi_{M_N}(t)f_1(t) + e^{-\delta N}\,t^{s}e^{-\lambda' t}\chi_{M_N}(t)f_2(t),$$
where $f_1$ and $f_2$ are bounded smooth functions and $(s,\lambda'),(s',\lambda) \in Z$. By Assumption \ref{assumption_G_convergencce}, when we pair this with the distribution $\str\big(\theta_{\tau}e^{-t{D_\tau}}\big)$ and take the limit $N\to\infty$, the two terms will vanish due to the prefactors involving $N$.

Consider now the first two terms of equation \eqref{trace_pre_limit}. These are analytic functions in the parameters $\lambda$ and $s$. Furthermore, Assumption \ref{assumption_diff_hormander} ensures that $\tau \to K_\tau$ is continuous with respect to the H\"ormander topology on $\D_\Gamma(\R_+\times M\times M)$ with $\Gamma\cap N^*\iota=\emptyset$, where $K_\tau$ is the Schwartz kernel of $\exp(-t{D_\tau})$. This continuity is preserved by the application of the differential operator $\theta_\tau$, depending smoothly on $\tau$. Thus, the terms of equation \eqref{trace_pre_limit} are in fact continuous functions of $\tau$. By Assumption \ref{assumption_G_convergencce} the $N\to\infty$ limit is locally uniform in $(\tau,\lambda,s)$ for $(\lambda,s) \in Z$. This implies that the continuity and analyticity of the terms in \eqref{trace_pre_limit} is preserved in the limit. Moreover, since the derivatives 
$$\frac{d}{d\tau}\pair{\str\left(e^{-t{D_\tau}}\big|_{L_\tau}\right)}{t^{s-1}e^{-t\lambda}\chi_N(t)}$$
converge locally uniformly in $\tau$ as $N\to\infty$, we can take the $\tau$-derivative inside the limit, when computing the derivative of $F(\tau,\lambda,s)$. Therefore, when $(\lambda,s)\in Z$, we find
\begin{equation*}
\begin{split}
    \frac{d}{d\tau}F(\tau,\lambda,s) &= \frac{1}{\Gamma(s)}\lim_{N\to\infty}\frac{d}{d\tau}\pair{\str\left(e^{-t{D_\tau}}\big|_{\maybe{\mathsf{L}}_\tau}\right)}{t^{s-1}e^{-t\lambda}\chi_N(t)} \\ 
    &= \lambda\,\frac{1}{\Gamma(s)}\pair{\str\big(\theta_{\tau}e^{-t{D_\tau}}\big)}{t^se^{-\lambda t}} - \frac{s}{\Gamma(s)}\pair{\str\big(\theta_{\tau}e^{-t{D_\tau}}\big)}{t^{s-1}e^{-\lambda t}},
    \end{split}
\end{equation*}
which is a continuous function of $(\tau,\lambda,s)$ and analytic in $(\lambda,s)\in Z$.
\end{proof}

We can now apply the final Assumption \ref{assumption_analytic_continuation} concerning the analytic continuation of $G(\tau,\lambda,s)$ to conclude the theorem.

\begin{proof}[Proof of Theorem \ref{thm_invariance}]
Integrating equation \eqref{F_derivative} over $\tau$ and applying the fundamental theorem of calculus to the continuous function $\frac{d}{d\tau}F(\tau,\lambda,s)$ gives
\begin{equation}
\label{F_difference}
    F(\tau,\lambda,s) - F(0,\lambda,s) = \lambda\,\frac{1}{\Gamma(s)}\int_0^\tau G(\tau',\lambda,s+1)\,d\tau' - \frac{s}{\Gamma(s)}\int_0^\tau G(\tau',\lambda,s)\,d\tau'.
\end{equation}
This expresses the difference of the function $F$ for two different general codifferentials in terms of the analytic function $G(\tau',\cdot,\cdot): Z \to \mathbb{C}$. The flat determinant is obtained by evaluating the $s$-derivative of $F$ at the point $\lambda=0,\, s=0$. So the prefactors of $\lambda$ and $s$ in \eqref{F_difference} already suggest that the difference of the flat determinants should vanish. However, \eqref{F_difference} only holds on the domain $Z\subset\mathbb{C}^2$ and to proceed one must be able to analytically continue the right hand side of \eqref{F_difference} to $\lambda=0,\, s=0$.

By Assumption \ref{assumption_analytic_continuation}, both the integrands in \eqref{F_difference} have analytic continuations to $\lambda=0,\,s=0$. Moreover, by the local boundedness of this analytic continuation, the integrals converge absolutely and $\int_0^\tau|G(\tau',\lambda,s)|\,d\tau'$ is locally bounded for $(\lambda,s) \in \tilde{Z}$. This implies that the integrals converge to analytic functions of $(\lambda,s)$ in the domain $\tilde{Z}$.
Thus, we can evaluate the $s$-derivative of equation \eqref{F_difference} at ${(\lambda,s)=(0,0)}$. Recalling that the inverse of the gamma function is an entire function with a first order zero at $s=0$, the second term in \eqref{F_difference} is of order $s^2$ near $s=0$. Since $\int_0^\tau G(\tau',\lambda,s)\,d\tau'$ is analytic at $(\lambda,s) = (0,0)$, this implies that
$$\frac{d}{ds}\Big(\frac{s}{\Gamma(s)}\int_0^\tau G(\tau',\lambda,s)\,d\tau'\Big)$$
vanishes at $s=0$. Similarly, 
$$\frac{d}{ds}\Big(\frac{1}{\Gamma(s)}\int_0^\tau G(\tau',\lambda,s+1)\,d\tau'\Big)$$
remains finite at $(s,\lambda) = (0,0)$, so evaluating the $s$-derivative of the first term in \eqref{F_difference} at $\lambda=0$ yields zero. Thus,
$$\frac{d}{ds}\big(F(\tau,\lambda,s) - F(0,\lambda,s)\big)\Big|_{\lambda=0,\,s=0} = 0.$$
Finally, applying the definition of the flat determinant, see \eqref{det_from_F}, we find
\begin{equation}
    \frac{\sdet({D_\tau}|_{L_\tau})}{\sdet({D_0}|_{L_0})} = \exp\Big(-\frac{d}{ds}\big(F(\tau,\lambda,s) - F(0,\lambda,s)\big)\Big|_{\lambda=0,\,s=0}\Big) = 1
\end{equation}
Since $\sdet({D_0}|_{L_0})$ is non-zero by assumption, we find that the flat determinant restricted to the general codifferential subspace remains constant along the variation of general codifferentials.
\end{proof}

\section{Local constancy of the analytic torsion}\label{sec:ATinvariance}
In this section we recover a classic result by Ray and Singer concerning the invariance of the analytic torsion on the choice of a metric over the base manifold \cite{ray_singer}. In fact, we will more generally apply our local constancy result to families of general codifferentials $\delta_\tau$ with positive definite elliptic characteristic operators $D_\tau=[\delta_\tau,\d]$. We will see that on odd dimensional manifolds any inner variation $\tau\to\delta_\tau$ of general codifferentials whose characteristic operator is elliptic and (strictly) positive definite satisfies all the requirements of Theorem \ref{thm_invariance}.

\subsection{The Hodge general codifferential}
We consider here the Hodge codifferential. Let $g$ be a Riemannian metric on $M$ and consider the map:  
\begin{equation}
\label{codifferential}
\codif_g: \Omega^k(M,E) \to \Omega^{k-1}(M,E), \quad  \codif_g = (-1)^{n(k-1)+1}*_g\d*_g = (-1)^k *_g\d*_g^{-1},
\end{equation}
where $*_g$ is the Hodge star operator with respect to the metric $g$. Note that $\codif_g$ is the adjoint of $\d$ with respect to the inner product on $\Omega^\bullet(M,E)$ induced by $g$ and the Hermitian inner product on the fibers of $E$. Recall that this Hermitian structure is assumed to be compatible with the connection on $E$.

\begin{lemma}
\label{codiff_as_GF_op}
    Given the twisted topological data of Definition \ref{def:TTD}, if $g$ is a Riemannian metric on $M$, then the Hodge codifferential with respect to $g$ defines an acyclic, transversal general codifferential $\delta=\codif_g$, in the sense of Definitions \ref{def_general_codifferential} and \ref{def_codifferential_types}. The splitting is given by the Hodge decomposition for an acyclic de Rham complex
    \begin{equation}
    \label{hodge_splitting}
        \Omega^\bullet(M,E) = \im(\codif_g) \oplus \im(\d) \doteq L\oplus C,
    \end{equation}
    and the characteristic operator is just the Hodge Laplacian twisted by the connection $$D = \Delta_g.$$
\end{lemma}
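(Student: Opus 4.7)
My plan is to verify one by one the axioms of a general codifferential (Definition \ref{def_general_codifferential}) and of the acyclic/transversal conditions (Definition \ref{def_codifferential_types}), and then compute the characteristic operator explicitly. The main outline is as follows.

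First, I would check the formal properties of $\codif_g$. Nilpotency, $\codif_g^2=0$, follows directly from the formula $\codif_g=\pm *_g\d *_g$ together with $\d^2=0$ and $*_g *_g=\pm\mathrm{id}$ on homogeneous forms. For graded symmetry with respect to the Poincaré-type pairing $\int_M\langle\cdot\wedge\cdot\rangle_E$, I would combine two standard facts: (i) compatibility of $\nabla$ with the Hermitian structure together with Stokes' theorem implies graded symmetry of $\d$ w.r.t.\ the pairing, i.e.\ $\int_M\langle\d\omega\wedge\eta\rangle_E=\pm\int_M\langle\omega\wedge\d\eta\rangle_E$; and (ii) the Hodge star is compatible with the pairing in the sense that $\int_M\langle *_g\omega\wedge\eta\rangle_E=\pm\int_M\langle\omega\wedge *_g\eta\rangle_E$. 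Writing $\codif_g$ as a conjugation of $\d$ by $*_g$ then transfers the graded symmetry of $\d$ into graded symmetry of $\codif_g$, up to controlled signs depending on the degrees.

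Next, I would invoke the classical Hodge decomposition for the twisted de Rham complex: since the connection is flat and Hermitian, $\Delta_g$ is a formally self-adjoint, positive elliptic operator (with respect to the $L^2$ inner product induced by $g$ and $\langle\cdot,\cdot\rangle_E$), and one has the orthogonal splitting $\Omega^k(M,E)=\im(\d)\oplus\im(\codif_g)\oplus\ker(\Delta_g)$. The acyclicity assumption $H^\bullet(M,E)=0$ combined with the Hodge isomorphism $H^\bullet(M,E)\simeq\ker(\Delta_g)$ forces $\ker(\Delta_g)=0$, so that \eqref{hodge_splitting} holds, with both summands being closed subspaces in the Fr\'echet topology. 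This simultaneously proves: the existence of a closed complement for $L=\im(\codif_g)$; acyclicity of the complex $(\Omega^\bullet(M,E),\codif_g)$, via $\ker(\codif_g)=\im(\codif_g)\oplus\ker(\Delta_g)=\im(\codif_g)$; and injectivity of $D$, since $\ker(\Delta_g)=\{0\}$.

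It then remains to check that the complement $C=\im(\d)$ is $D$-invariant, which is immediate from $[\Delta_g,\d]=0$, and to identify the characteristic operator, which is by definition
\[
D=[\codif_g,\d]=\codif_g\d+\d\codif_g=\Delta_g
\]
by the standard definition of the Hodge Laplacian. Finally, I would note that well-definedness and non-vanishing of $\sdet(\Delta_g|_L)$ is classical (zeta-regularisation for positive elliptic operators, which agrees with the flat determinant in this setting, see \cite[Appendix B]{Schiavina_Stucker2}), giving regularity of $\codif_g$ when combined with acyclicity. The main subtlety in the whole argument is the verification of the graded symmetry in step one, since the paper's pairing is the topological Poincar\'e pairing, not the $g$-dependent $L^2$ pairing with respect to which $\codif_g$ is manifestly the adjoint of $\d$; bookkeeping of signs from the shift of degrees of $*_g$ is what needs care.
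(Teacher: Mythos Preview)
Your proposal is correct and follows essentially the same route as the paper: verify nilpotency from $\codif_g=\pm *_g\d *_g$, graded symmetry from compatibility of $\nabla$ with the Hermitian structure, then invoke the Hodge decomposition and use the acyclicity hypothesis to kill $\ker(\Delta_g)$, finally checking $D$-invariance of $\im(\d)$ via $[\Delta_g,\d]=0$. The paper's proof additionally records that both summands $\im(\codif_g)$ and $\im(\d)$ are isotropic for the Poincar\'e pairing (not strictly needed for the lemma as stated, but used elsewhere), while your closing remark on regularity anticipates Lemma~\ref{lemma_elliptic_regular} rather than being part of this lemma.
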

\begin{proof}
    Note that $\codif_g: \Omega^\bullet(M,E) \to \Omega^{\bullet-1}(M,E)$ is a first order differential operator. Due to the flatness of the connection, we have
    $$\codif_g\circ\codif_g = - *_g\d\circ\d*_g^{-1} = 0.$$
    Thanks to the compatibility of $\nabla$ and $\pair{\cdot}{\cdot}_E$, we further find
    $$\int_M\langle\omega\wedge\codif_g\eta\rangle_E = (-1)^k\int_M\langle\codif_g\omega\wedge\eta\rangle_E, \quad\text{for}\quad \eta\in\Omega^k(M,E),\, \omega\in\Omega^{n-k+1}(M,E).$$
    Hodge decomposition yields the splitting
    $$\Omega^\bullet(M,E) = \im(\codif_g) \oplus \im(\d) \oplus \ker(\Delta_g),$$
    where the Laplacian $\Delta_g = [\codif_g,\d]$ coincides with the characteristic operator. By assumption, the twisted de Rham complex is acyclic, so $\ker(\Delta_g) = \{0\}$ and we have \eqref{hodge_splitting}. Again by compatibility of the connection and the Hermitian structure on $E$, we indeed have the two isotropic complements
    $$
    \int_M \langle\omega\wedge\eta\rangle_E = 0, \quad \forall \omega,\eta \in \im(\d), \qquad \int_M \langle\omega\wedge\eta\rangle_E = 0, \quad \forall \omega,\eta \in \im(\delta_g),
    $$
    Finally, since $\d$ commutes with $\Delta_g$, we find that the characteristic operator leaves the subspace $\im(\d)$ invariant. Note that the Hodge splitting \eqref{hodge_splitting} is moreover orthogonal with respect to the inner product induced by the metric $g$.
\end{proof}

\begin{rmk}
The Hodge codifferential is a special instance, in that two of the three complexes of Remark \ref{remark_complexes} coincide. The complementary complex to the general codifferential $\codif_g$, which we called $\delta^\bot$ and defines the splitting in \eqref{hodge_splitting} via $C=\im(\delta^\bot)$, is just the original complex given by the twisted de Rham differential $\d$. The example we study in Section \ref{section_ruelle_independence} will showcase a more general situation.
\end{rmk}

\begin{lemma}
\label{codiff_homotopy}
    Let $\tau\in(-1,1)\to g(\tau)$ be a smooth family of Riemannian metrics on $M$. Then the family of codifferentials $\codif_{g(\tau)}$ forms an inner variation of $\delta_0\doteq\delta_{g(0)}$ as in Definition \ref{def_homotopy_structure}. More precisely,
    $$\frac{d}{d\tau}\codif_{g(\tau)} = [\theta_\tau,\codif_{g(\tau)}], \quad\text{with}\quad \theta_\tau = \Bigl(\frac{d}{d\tau}*_{g(\tau)}\Bigr)*_{g(\tau)}^{-1}.$$
    In fact, the family is integrable with
    $$\delta_{g(\tau)} = *_{g(\tau)}*_{g(0)}^{-1}\delta_{g(0)}\bigl(*_{g(\tau)}*_{g(0)}^{-1}\bigr)^{-1}.$$
\end{lemma}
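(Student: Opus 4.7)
The plan is to prove the stronger integrability statement first---that $\delta_{g(\tau)}$ is the conjugate of $\delta_{g(0)}$ by a smooth family of degree-preserving local maps $\beta_\tau$---and then to deduce the inner variation statement, together with the explicit form of $\theta_\tau$, as a direct consequence of the earlier lemma asserting that an integrable family is an inner variation.

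The candidate conjugating map is $\beta_\tau \doteq *_{g(\tau)}\circ *_{g(0)}^{-1}$. I would first verify that this has the structural properties required by Definition \ref{def_homotopy_structure}: it is smooth in $\tau$ (inherited from the smoothness of $g(\tau)$ and the fact that $*_g$ depends pointwise and algebraically on $g$); invertible, with $\beta_\tau^{-1} = *_{g(0)}\circ *_{g(\tau)}^{-1}$; degree-preserving, since each Hodge star exchanges $\Omega^k$ with $\Omega^{n-k}$ and the two factors therefore compose back to the original degree; and local, as $*_g$ is a fiberwise bundle automorphism determined pointwise by $g$ and by the (fixed) Hermitian structure on $E$.

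Next I would verify the conjugation identity $\delta_{g(\tau)} = \beta_\tau\circ\delta_{g(0)}\circ\beta_\tau^{-1}$ by a direct computation on each homogeneous component $\Omega^k(M,E)$, substituting $\delta_{g(0)}|_{\Omega^k} = (-1)^k\,*_{g(0)}\d\,*_{g(0)}^{-1}$ from \eqref{codifferential} and using the cancellations $*_{g(0)}^{-1}\circ *_{g(0)} = \mathrm{id}$ and $*_{g(0)}\circ *_{g(0)}^{-1} = \mathrm{id}$ on the intermediate form degrees. The crucial observation is that the sign $(-1)^k$ depends only on the form degree and not on the metric, so it passes through the conjugation intact, and the expression collapses to $(-1)^k\,*_{g(\tau)}\d\,*_{g(\tau)}^{-1} = \delta_{g(\tau)}|_{\Omega^k}$. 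This is exactly the defining property of an integrable family, Equation \eqref{e:regGFhomotopy}.

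Finally, invoking the lemma that integrable families are inner variations gives $\theta_\tau = \dot\beta_\tau\circ\beta_\tau^{-1}$. Expanding this composition and using $*_{g(0)}^{-1}\circ *_{g(0)} = \mathrm{id}$ in the middle once more, the $g(0)$-dependent factors drop out entirely, yielding the claimed formula $\theta_\tau = \bigl(\tfrac{d}{d\tau}*_{g(\tau)}\bigr)\,*_{g(\tau)}^{-1}$. I do not anticipate any genuine obstacle: the argument is purely algebraic, relying only on the pointwise invertibility and smooth dependence of the Hodge star on $g$. The only step requiring mild care is keeping track of form degrees and the $k$-dependent sign while inserting the identity $*_{g(0)}^{-1}\circ *_{g(0)}$ into the middle of the expression.
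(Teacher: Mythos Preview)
Your proposal is correct and follows essentially the same route as the paper: define $\beta_\tau = *_{g(\tau)}*_{g(0)}^{-1}$, verify by direct substitution on each $\Omega^k$ that conjugation by $\beta_\tau$ turns $\delta_{g(0)}$ into $\delta_{g(\tau)}$, and then read off $\theta_\tau = \dot\beta_\tau\beta_\tau^{-1} = (\tfrac{d}{d\tau}*_{g(\tau)})*_{g(\tau)}^{-1}$. The paper's proof is slightly terser but structurally identical.
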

\begin{proof}
    Restricted to forms of degree $k$, we have $\codif_{g(\tau)}=(-1)^k*_{g(\tau)}\d*_{g(\tau)}^{-1}$ for all $\tau$, where $*_{g(\tau)}$ is the Hodge star operator with respect to the metric $g(\tau)$. Thus,
    $$\codif_{g(\tau)} = *_{g(\tau)}*_{g(0)}^{-1}\codif_{g(0)}*_{g(0)}*_{g(\tau)}^{-1} = *_{g(\tau)}*_{g(0)}^{-1}\codif_{g(0)}\bigl(*_{g(\tau)}*_{g(0)}^{-1}\bigr)^{-1}.$$
    This can be written $\codif_{g(\tau)}=\beta_\tau\circ\codif_{g(0)}\circ\beta_\tau^{-1}$ with respect to the smooth family of degree-preserving bundle automorphisms
    $${\beta}_{\tau} = (*_{g(\tau)}*_{g(0)}^{-1}) \otimes 1: \wedge^\bullet T^*M \otimes E \to \wedge^\bullet T^*M \otimes E.$$
    This shows that $\tau\to\codif_{g(\tau)}$ is an integrable family of general codifferentials in the sense of Definition \ref{def_homotopy_structure}, and hence also an inner variation. A direct calculation shows that $\frac{d}{d\tau}\codif_{g(\tau)} = [\theta_\tau,\codif_{g(\tau)}]$, where
    $${\theta}_{\tau} = \bigl((\tfrac{d}{d\tau}*_{g(\tau)})*_{g(\tau)}^{-1}\bigr) \otimes 1: \wedge^\bullet T^*M \otimes E \to \wedge^\bullet T^*M \otimes E$$
    is a smooth family of degree-preserving bundle endomorphisms.
\end{proof}

The Hodge codifferential $\delta=\codif_g$ gives rise to the elliptic differential operator $D=\Delta_g$ as characteristic operator. Note moreover that $\Delta_g$ is essentially self-adjoint on the space of $L^2$ forms with respect to the inner product induced by the metric $g$. In fact, thanks to the assumption of vanishing cohomology for the twisted de Rham complex, $\Delta_g$ has strictly positive spectrum. It is well known in this case that the zeta-function regularized determinant of $\Delta_g$ is well-defined, and it coincides with the flat determinant used here (see e.g.\ \cite{hadfield_kandel_schiavina,Schiavina_Stucker2}). The square-root of the regularized determinant of the twisted Laplacian restricted to coexact forms is known in the literature as the analytic torsion or Ray-Singer torsion
$$T_{\rho,g}(M,E) = \sdet\bigl(\Delta_g|_{\im(\codif_g)}\bigr)^{\frac{1}{2}}.$$
In the next subsection, we will show that the constancy of the analytic torsion along a smooth variation of metrics follows from our general result, Theorem \ref{thm_invariance}. Here, we briefly recall the well-definedness of the flat determinant in the more general case of a positive definite elliptic operator.

\begin{lemma}
\label{lemma_elliptic_regular}
    Let $\delta$ be a general codifferential such that $D=[\delta,\d]$ is elliptic and (strictly) positive definite with respect to some inner product $\pair{\cdot}{\cdot}$ on $\Omega^\bullet(M,E)$, induced by a density on $M$ and an inner product on the fibers of $T^*M$. Then $\delta$ is a regular general codifferential in the sense of \ref{def_codifferential_types}. In particular, the flat super-determinant
    $$\sdet\bigl(D|_{\im(\delta)}\bigr)$$
    is well-defined and nonvanishing.
\end{lemma}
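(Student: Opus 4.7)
The plan is to verify the three conditions in Definition \ref{def_codifferential_types}: acyclicity of the $\delta$-complex, transversality of $D$, and well-definedness together with non-vanishing of $\sdet(D|_L)$. Since $D=[\delta,\d]$ with $\delta^2=\d^2=0$, $D$ commutes with both $\delta$ and $\d$. Because $D$ is elliptic and strictly positive definite on the compact $M$, standard elliptic theory yields a discrete positive spectrum bounded away from zero, finite-dimensional smooth eigenspaces $V_\lambda^{(k)}\subset\Omega^k(M,E)$, and a bounded inverse $D^{-1}$ which still commutes with $\delta$ and $\d$.

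For acyclicity I would argue algebraically: if $\omega\in\ker(\delta)$ then
\[
\omega = D^{-1}D\omega = D^{-1}(\delta\d+\d\delta)\omega = D^{-1}\delta\d\omega = \delta\bigl(D^{-1}\d\omega\bigr) \in \im(\delta),
\]
so $\ker(\delta)=\im(\delta)$. For transversality, injectivity of $D$ is immediate from positive definiteness, and I would take $C\doteq\im(\d)$, which is $D$-invariant because $D$ commutes with $\d$. The operators $P_L\doteq \delta D^{-1}\d$ and $P_C\doteq \d D^{-1}\delta$ are easily checked to be complementary projections with images $L$ and $C$ respectively (using $\delta^2=\d^2=0$ and $[D^{-1},\delta]=[D^{-1},\d]=0$), giving the required splitting $\Omega^\bullet(M,E)=L\oplus C$.

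The bulk of the proof concerns the third condition. Because each $D^{(k)}$ is positive elliptic on a compact manifold, classical heat-kernel theory guarantees that the Schwartz kernel of $e^{-tD^{(k)}}$ is smooth for $t>0$, so its flat trace coincides with the ordinary trace $\tr(e^{-tD^{(k)}})=\sum_\lambda\dim V^{(k)}_\lambda\,e^{-\lambda t}$. Using Lemma \ref{restricted_trace} to eliminate the restriction to $L$, the auxiliary function $F(\lambda,s)$ from \eqref{det_function} becomes an alternating combination of Mellin transforms of such heat traces; since the spectrum is bounded away from zero the integral converges absolutely at $\lambda=0$ for $\Re(s)$ large, and the cutoff limit over $\chi_N$ is handled by dominated convergence. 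The resulting $F(0,s)$ coincides with the standard combination $\sum_k(-1)^{k+1}k\,\zeta^{(k)}(s)$ with $\zeta^{(k)}(s)=\Gamma(s)^{-1}\int_0^\infty t^{s-1}\tr(e^{-tD^{(k)}})\,dt$. Minakshisundaram--Pleijel / Seeley short-time asymptotics then provide meromorphic continuation to $s\in\mathbb{C}$ which is regular at $s=0$, so $\log\sdet(D|_L)=-\tfrac{d}{ds}F(0,s)|_{s=0}$ is finite and in particular $\sdet(D|_L)\neq 0$.

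The only genuinely analytic input, and the step I would be most careful about, is identifying the cutoff limit defining the flat determinant in \eqref{det_function} with the usual zeta-regularized determinant. This is classical in the positive-elliptic setting (see \cite[Appendix B]{Schiavina_Stucker2}) and reduces to dominated convergence on $[0,\infty)$ combined with the rapid decay of $\tr(e^{-tD^{(k)}})$ as $t\to\infty$ (from the spectral gap) and its controlled short-time behavior (from heat-kernel asymptotics). No new analytic ingredient beyond these should be required.
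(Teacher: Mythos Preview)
Your proposal is correct and follows essentially the same route as the paper: you build the complementary projections $\delta D^{-1}\d$ and $\d D^{-1}\delta$ from the invertible elliptic $D$ to get acyclicity and transversality, then invoke Lemma~\ref{restricted_trace} together with heat-kernel smoothness, eigenvalue decay, and short-time asymptotics to identify the flat determinant with the usual zeta-regularized one. The paper is slightly more explicit in noting that $D^{-1}$ is pseudodifferential (so that $\Pi_L$ has wavefront set in the conormal to the diagonal, as needed to invoke the restricted flat trace in Lemma~\ref{restricted_trace}), and in writing out the pole structure of the analytic continuation near $s=0$, but these are exactly the points you gesture at.
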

\begin{proof}
    We first show that $\delta$ must be acyclic and transversal. Note that the completion of $\Omega^k(M,E)$ with respect to $\pair{\cdot}{\cdot}$ gives the $L^2$ space of $E$-valued $k$-forms and the elliptic operator $D$ defines a positive semi-definite, and thus self-adjoint, unbounded operator on this $L^2$ space. By the spectral theory of self-adjoint elliptic operators, $D$ has discrete spectrum consisting of eigenvalues with smooth eigenfunctions. The positivity of $D$ on $\Omega^\bullet(M,E)$ implies that $0$ must lie in the resolvent set and $D$ is in fact strictly positive definite on $L^2$. Again by ellipticity, the $L^2$ inverse $D^{-1}$ is a pseudodifferential operator, in particular mapping $\Omega^\bullet(M,E)$ to itself. Furthermore, $D^{-1}$ commutes with $\delta$ and $\d$. Thus, we can define $L=\im(\delta)$, $C=\im(\d)$ and the continuous projection operators
    $$\Pi_L = D^{-1}\delta\d: \Omega^\bullet(M,E) \to L, \quad \Pi_C = D^{-1}\d\delta: \Omega^\bullet(M,E) \to C.$$
    Since $\Pi_L+\Pi_C=1$ and $\Pi_L\Pi_C=\Pi_C\Pi_L=0$, we have a generalized Hodge decomposition:
    $$\Omega^\bullet(M,E) = L\oplus C = \im(\delta)\oplus\im(\d),$$
    where $D$ leaves $\im(\d)$ invariant and $\ker(\delta)\cap\im(\d)=\{0\}$ due to the invertibility of $D$.
    
    Regarding the flat determinant, it is well-known that a positive definite elliptic differential operator has a smooth heat kernel\footnote{See Appendix \ref{appendix_heat_kernel} for an overview of the heat kernel construction} $K(t,x,y) \in \C(\R_+\times M\times M)$. That is, $D$ generates a semigroup $e^{-tD}$ with smooth Schwartz kernel $K$. Thus, the flat trace of $e^{-tD}$ is well-defined and given by integrating $K$ over the diagonal in $M\times M$. Moreover, the pseudodifferential projection operator $\Pi_{L}$ has wavefront set contained in the conormal to the diagonal in $M\times M$. Thus, the restricted flat supertrace $\str(e^{-tD}|_{L})=\str(e^{-tD}\Pi_{L})$ is well-defined, see \cite[Definition B.12]{Schiavina_Stucker2}. Using Lemma \ref{restricted_trace}, we can write the restricted trace as
    $$\str\bigl(e^{-tD}|_{L}\bigr) = \sum_{k=0}^n(-1)^{k+1}k\tr\bigl(e^{-tD^{(k)}}\bigr),$$
    where $D^{(k)}$ denotes the action of $D$ on $\Omega^k(M,E)$.

    Now, in each form degree, we have
    $$\str\bigl(e^{-tD^{(k)}}\bigr) = \int_M \mathrm{Tr}\bigl(K^{(k)}(t,x,x)\bigr) = \mathrm{tr}_{L^2}\bigl(e^{-tD^{(k)}}\bigr),$$
    where $\mathrm{Tr}$ denotes the trace in the fibers of $\wedge^kT^*M\otimes E$ and $\mathrm{tr}_{L^2}$ is the usual $L^2$ trace. The latter can be written in terms of the eigenvalues of $D^{(k)}$:
    $$\mathrm{tr}_{L^2}\bigl(e^{-tD^{(k)}}\bigr) = \sum_{j\in\N}e^{-t\lambda_j}.$$
    To compute the flat determinant in degree $k$, we must consider the following integral, see \eqref{det_function}, which we split into two pieces:
    \begin{equation}
    \begin{split}
    \label{split_integral}
        F^{(k)}(\lambda,s) &= \frac{1}{\Gamma(s)}\int_0^\infty \tr\bigl(e^{-tD^{(k)}}\bigr) t^{s-1}e^{-\lambda t}\,dt \\
        &= \frac{1}{\Gamma(s)}\int_1^\infty \mathrm{tr}_{L^2}\bigl(e^{-tD^{(k)}}\bigr) t^{s-1}e^{-\lambda t}\,dt + \frac{1}{\Gamma(s)}\int_0^1\int_M \mathrm{Tr}\bigl(K^{(k)}(t,x,x)\bigr) t^{s-1}e^{-\lambda t}\,dt.
    \end{split}
    \end{equation}
    The first term can be estimated locally uniformly in $s\in\mathbb{C}$ by
    $$\int_1^\infty \Bigr|\mathrm{tr}_{L^2}\bigl(e^{-tD^{(k)}}\bigr) t^{s-1}e^{-\lambda t}\Bigr|\,dt \leq \sum_{j\in\N}\int_1^\infty e^{-t(\lambda_j+\Re(\lambda))}t^{\Re(s)-1}\,dt \leq C\sum_{j\in\N}e^{-(\lambda_j+\Re(\lambda))},$$
    where we take $\Re(\lambda)>-c$ for $c=\min_j\lambda_{j}>0$. This converges locally uniformly in $\lambda$ thanks to Weyl's law for the positive definite elliptic operator $D$, see also \cite[Lemma 1.6.3]{gilkey} for a weaker but sufficient eigenvalue bound. Thus, the first term in \eqref{split_integral} defines an analytic function in $\{s\in\mathbb{C}\}\times\{\Re(\lambda)>-c\}$.

    For the second term in \eqref{split_integral}, we use the well-known heat kernel asymptotics, see for instance \cite[Lemma 1.7.4]{gilkey}. We have $|\mathrm{Tr}(K^{(k)}(t,x,x))| \leq t^{-\frac{n}{m}}$ for $t\in(0,1)$, where $m$ is the order of $D$ and $n$ the dimension of $M$. Thus, the integral converges to an analytic function for all $\lambda\in\mathbb{C}$ and $\Re(s)>\frac{n}{m}$.
    Since $\lambda=0$ is in the domain of convergence of \eqref{split_integral}, we can directly set $\lambda=0$ and must only prove an analytic continuation of $F^{(k)}(0,s)$ to $s=0$. By the heat kernel asymptotics, we have
    $$\int_M\mathrm{Tr}(K^{(k)}(t,x,x)) = \sum_{j=0}^{n+m}t^{\frac{j-n}{m}}B_j + \mathcal{O}(t)$$
    for some coefficients $B_j$, where $B_j=0$ for $j$ odd. Thus, we find in the region of convergence
    $$F^{(k)}(0,s) = F_{a}(s) + \sum_{j=0}^{n+m}\frac{1}{\Gamma(s)}\cdot\frac{B_j}{s-\frac{j-n}{m}},$$
    where $F_{a}$ is analytic in a neighborhood of $s=0$. This expression has an evident analytic continuation to $s=0$. Note that the potential first order pole at $s=0$ for $j=n$ is canceled by the first order pole of $\Gamma(s)$ at $s=0$. Thus, $F^{(k)}(0,s)$ is analytic in a neighborhood of $s=0$ and the flat superdeterminant is well-defined by the non-zero expression
    \begin{equation}\label{elliptic_nonvanishing}
        \sdet\bigl(e^{-tD}|_{\im(\delta)}\bigr) = \exp\Bigl(\sum_{k=0}^n(-1)^{k}\frac{d}{ds}\Big|_{s=0}F^{(k)}(0,s)\Bigr).
    \end{equation}
\end{proof}

\subsection{Local constancy in the elliptic case}
\label{section_invariance_elliptic}
In this subsection, we apply the framework developed in Section \ref{section_independence} to the particularly nice case when the characteristic operator is elliptic and positive definite. As a corollary, we obtain the local constancy of the analytic torsion under smooth variations of the metric. The proof of the following statement will be presented after we state this corollary.

\begin{prop}
\label{invariance_elliptic}
    Let $M$ be an odd dimensional compact, orientable manifold. Let $\tau\in(-1,1) \to {\delta_\tau}$ be a smooth family of general codifferentials that form an inner variation of $\delta_0$ in the sense of Definition \ref{def_homotopy_structure}. Assume further that $D_\tau=[\delta_\tau,\d]$ is elliptic and (strictly) positive definite with respect to a smooth family of inner products $\pair{\cdot}{\cdot}_\tau$ on $\Omega^\bullet(M,E)$, where $\pair{\cdot}{\cdot}_\tau$ is induced by a smooth family of inner products on the fibers of $\wedge^kT^*M\otimes E$ and a smooth family of densities on $M$. Then for all $\tau\in(-1,1)$, we have
    $$\sdet\bigl(D_\tau|_{L_\tau}\bigr) = \sdet\bigl(D_0|_{L_0}\bigr).$$
\end{prop}

\begin{cor}[{\cite[Theorem 2.1]{ray_singer}}]\label{thm_torsion_constant}
Let $\tau \to g(\tau)$ be a smooth family of Riemannian metrics on $M$. Then the analytic torsion
\[
\tau \mapsto T_{\rho,g(\tau)}(M,E) = \sdet(\Delta_{g(\tau)}|_{L_\tau})^{\frac{1}{2}},
\]
is constant in $\tau$.
\end{cor}
\begin{proof}
    When $\dim(M)$ is even, the analytic torsion is always trivially $1$, due to an added symmetry coming from the Hodge star operator, as shown in \cite[Theorem 2.3]{ray_singer}. Thus, odd-dimensional $M$ is the case of interest. We saw in Lemma \ref{codiff_as_GF_op} that $D_\tau=\Delta_{g(\tau)}$ can be interpreted as the characteristic operator with respect to the smooth family of general codifferentials ${\delta_\tau}=\codif_{g(\tau)}$. In Lemma \ref{codiff_homotopy}, it is shown that this family forms an inner variation. The twisted Laplacian is clearly an elliptic operator which is positive semidefinite with respect to the smooth family of inner products on $\Omega^\bullet(M,E)$ induced by the metric $g(\tau)$ together with the inner product on $E$. By assumption on the vanishing of the twisted de Rham cohomology it is in fact positive definite. Thus, the Corollary follows immediately from Proposition \ref{invariance_elliptic}.
\end{proof}

\begin{rmk}
    As noted in Remark \ref{rmk_generalization}, there is nothing special about the twisted de Rham differential and Proposition \ref{invariance_elliptic} could be formulated in a more general setting. Namely, consider a graded vector bundle $\V$ over an odd dimensional compact, orientable manifold $M$ equipped with a degree $1$ differential operator $d$ and a smooth family of degree $-1$ differential operators $\delta_\tau$, which satisfy $d\circ d=0$ and $\delta_\tau\circ\delta_\tau=0$ for all $\tau$, and thus form a pair of complexes
    \[
    \begin{tikzcd}
        \cdots \arrow[r, shift left] & C^\infty(M,\V^k) \arrow[l, shift left] \arrow[r, shift left, swap, "d"'] & C^\infty(M,\V^{k+1}) \arrow[l, shift left, swap, "\delta_\tau"'] \arrow[r, shift left] & \cdots \arrow[l, shift left]
    \end{tikzcd}
    \]
    If $\tau\to\delta_\tau$ is an inner variation, in the sense that $\frac{d}{d\tau}\delta_\tau = [\theta_\tau,\delta_\tau]$ for a family of degree $0$ differential operators $\theta_\tau$, and if the graded commutators $D_\tau=[\delta_\tau,d]$ are elliptic and strictly positive definite for each $\tau$, then the proof of Proposition \ref{invariance_elliptic} goes through almost verbatim and we have local constancy of the restricted flat determinant:
    $$\sdet(D_\tau|_{\im(\delta_\tau)}) = \sdet(D_0|_{\im(\delta_0)}), \quad \forall\,\tau.$$
\end{rmk}

We will prove Proposition \ref{invariance_elliptic} by showing that the requirements of Theorem \ref{thm_invariance} are satisfied. The proof will involve the smooth heat kernel for $D_\tau$ and its asymptotics as $t\to 0$. In particular, we use that the family of heat kernels for $D_\tau$ depends smoothly on the parameter $\tau$. There is a vast literature concerning heat kernels of elliptic operators. In the interest of remaining self-contained and because some care is needed to treat all constructions uniformly in $\tau$, we provide an overview of the heat kernel construction in the appendix, following in particular \cite{gilkey,berline_getzler}. Thus, a proof of the following statement can be found in Appendix \ref{appendix_heat_kernel}.

\begin{lemma}
\label{lemma_smooth_heat_kernel}
    Let $\tau\to D_\tau$ be a smooth family of positive definite elliptic operators as in Proposition \ref{invariance_elliptic}. Then the family of heat kernels for $D_\tau$ depends smoothly on $\tau$. That is, denoting by $\V=\wedge^\bullet T^*M\otimes E$ the vector bundle, the Schwartz kernel $K(\tau,t,x,y)$ of $e^{-tD_\tau}$ satisfies
    \begin{equation}
    \label{smoothness_heat_kernel}
        K \in \C\bigl((-1,1)\times \R_+\times M\times M;\V\boxtimes(\V^*\otimes\wedge^nT^*M)\bigr).
    \end{equation}
\end{lemma}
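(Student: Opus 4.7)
The plan is to invoke the Levi parametrix construction of the heat kernel (as in \cite[Chapter 1]{gilkey} or \cite[Chapter 2]{berline_getzler}), applied uniformly in the parameter $\tau$, and to track smooth dependence on $\tau$ through every step. Fix an auxiliary background Riemannian metric $g_0$ on $M$, inducing a geodesic distance $d_0$ and, via a tubular neighborhood of the diagonal in $M\times M$, local geodesic normal coordinates. Since $\tau\mapsto D_\tau$ is a smooth family of differential operators, its coefficients in any local trivialization of $\V$ depend smoothly on $\tau$; ellipticity and strict positive-definiteness, preserved in $\tau$, guarantee that the principal symbol is uniformly elliptic on compact subsets of $(-1,1)$.

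First I would construct, for each sufficiently large $N$, a parametrix
\[
H_N^\tau(t,x,y) = \chi(x,y)\,(4\pi t)^{-n/2} e^{-d_0(x,y)^2/4t}\sum_{j=0}^N t^j\, u_j^\tau(x,y),
\]
where $\chi$ is a smooth cutoff supported in a neighborhood of the diagonal, and the sections $u_j^\tau$ are determined by the usual recursive transport equations along radial $g_0$-geodesics arising from the asymptotic requirement $(\partial_t + D_\tau) H_N^\tau = O(t^{N - n/2 + 1})$. Each transport equation is a first-order linear ODE on a geodesic ray whose coefficients depend smoothly on $\tau$; by smooth dependence of ODEs on parameters, the solutions satisfy $u_j^\tau \in C^\infty((-1,1)\times M\times M; \V\boxtimes \V^*)$. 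Consequently $H_N^\tau$ lies in $C^\infty((-1,1)\times\R_+\times M\times M;\V\boxtimes(\V^*\otimes\wedge^nT^*M))$, and the remainder $R_N^\tau := (\partial_t + D_\tau) H_N^\tau$ shares this regularity while obeying the pointwise bound $|R_N^\tau(t,x,y)| \leq C_N\, t^{N-n/2}$, locally uniformly in $\tau$.

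Next I would recover the true heat kernel as the Volterra series
\[
K^\tau = H_N^\tau - H_N^\tau \star \sum_{k\geq 1} (-R_N^\tau)^{\star k},
\]
where $\star$ denotes convolution in $t$ composed with fiber composition over $M$. The standard iterated-convolution estimate gives $|(R_N^\tau)^{\star k}(t,x,y)| \leq C^k t^{k(N-n/2)+k}/k!$, so for $N>n/2$ the series and all its spatial derivatives converge absolutely and locally uniformly in $(\tau,t,x,y)$. Uniqueness of the heat semigroup on $\C(M,\V)$ (for each fixed $\tau$) identifies this reconstruction with the Schwartz kernel of $e^{-tD_\tau}$. To promote continuity in $\tau$ to smoothness, I would differentiate the series termwise in $\tau$: $\partial_\tau R_N^\tau$ is again of the form $t^{N-n/2}$ times a $C^\infty$ function, and applying Leibniz to $(R_N^\tau)^{\star k}$ yields $k^{j}$-many comparable convolutions for $\partial_\tau^j$, whose combinatorial factor is absorbed by the factorial coming from the time-convolution. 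Iterating this bookkeeping for all higher $\tau$-derivatives then yields the claimed joint regularity \eqref{smoothness_heat_kernel}.

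The main obstacle is precisely this uniform control of the Volterra series together with all its $\tau$-derivatives: the convolution bounds must be delicate enough to absorb the combinatorial growth produced by Leibniz differentiation and to propagate smoothness of the Gaussian-type kernel through iterated spacetime convolutions on the manifold. A cleaner alternative, which I would run in parallel as a cross-check, is to differentiate the heat equation formally: writing
\[
(\partial_t + D_\tau)\,\partial_\tau K^\tau = -(\partial_\tau D_\tau)\, K^\tau, \qquad \partial_\tau K^\tau\big|_{t=0} = 0,
\]
and applying Duhamel's formula (Lemma \ref{duhamel_lemma}) with the already-constructed semigroup as input produces $\partial_\tau K^\tau$ with the required regularity; a bootstrap on $j$, based on the same Duhamel identity applied to $\partial_\tau^{j} K^\tau$, closes the smoothness statement for all orders.
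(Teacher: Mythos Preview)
Your strategy---build an approximate heat kernel depending smoothly on $\tau$, control the remainder, and sum a Volterra series with uniform-in-$\tau$ bounds---is exactly the structure of the paper's proof. The paper likewise writes $K=\sum_{k\ge 0}(-1)^k K_N*(S_N)^{*k}$, uses the $t^k/k!$ factor from the simplex volume to absorb the combinatorics, and obtains joint smoothness by showing convergence in $C^J$ for every $J$ once $N$ is taken large enough. Your Duhamel bootstrap is a valid alternative route to the higher $\tau$-derivatives.

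The one substantive discrepancy is your choice of parametrix. You use the Minakshisundaram--Pleijel ansatz $(4\pi t)^{-n/2}e^{-d_0(x,y)^2/4t}\sum_j t^j u_j^\tau$, which is tailored to \emph{second-order} operators: the Gaussian is the leading heat profile of a Laplace-type operator, and the transport equations for the $u_j^\tau$ arise from matching that specific small-$t$ expansion. The lemma, however, is stated for the characteristic operators of Proposition~\ref{invariance_elliptic}, which are elliptic and positive definite but of unspecified order $m$ (the paper's appendix explicitly works with general $m$, using the short-time scale $t^{-n/m}$). The paper therefore constructs its approximate kernel via the parameter-dependent symbol calculus $S^k_{\Lambda,m}$ and the contour integral $\int_\gamma e^{-t\lambda}q^N(\tau,x,\xi,\lambda)\,d\lambda$, with $q^N$ a finite parametrix for $(D_\tau-\lambda)$ built recursively from the full symbol; smooth $\tau$-dependence is then read off from the symbol seminorms. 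Your construction covers the principal application ($D_\tau=\Delta_{g(\tau)}$) but not the lemma in the generality stated; to match the paper you would need to replace the Gaussian ansatz by the symbolic parametrix, after which the Volterra argument you outline goes through unchanged.
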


Using Lemma \ref{lemma_smooth_heat_kernel}, we can now prove Proposition \ref{invariance_elliptic}.

\begin{proof}[Proof of Proposition \ref{invariance_elliptic}]\label{proof_invariance_elliptic}
    By assumption $\tau\in(-1,1)\to\delta_\tau$ is an inner variation, and Lemma \ref{lemma_elliptic_regular} shows that $\delta_\tau$ is regular for each $\tau$. It remains to show that Assumptions \ref{assumption_diff_semigroup}-\ref{assumption_analytic_continuation} are satisfied. 

    Assumptions \ref{assumption_diff_semigroup} and \ref{assumption_smaller_wavefront} follow immediately from the smoothness of the Schwartz kernel $K$ of $e^{-tD_\tau}$, see \eqref{smoothness_heat_kernel}. For Assumption \ref{assumption_diff_hormander} we take $\Gamma=\emptyset$. The smoothness in \eqref{smoothness_heat_kernel} implies that $\tau \to K(\tau,\cdot,\cdot,\cdot) \in \C(\R_+\times M\times M)$ is smooth with respect to the usual Fr\'echet topology on the space of smooth functions. Since $\D_\emptyset(\R_+\times M\times M)=\C(\R_+\times M\times M)$ are topologically isomorphic, see \cite[Lemma 7.2]{brouder}, Assumption \ref{assumption_diff_hormander} follows.

    We turn to Assumption \ref{assumption_G_convergencce}. Since $\str(\theta_\tau e^{-tD_\tau})$ depends smoothly on $t$, the pairing in \ref{assumption_G_convergencce} becomes an integral and \ref{assumption_G_convergencce} is equivalent to the locally uniform convergence of
    $$\int_0^\infty \bigl|\str(\theta_\tau e^{-tD_\tau})t^{s-1}e^{-\lambda t}\bigr|\,dt$$
    for $\Re(s),\Re(\lambda)$ large enough.
    Moreover, since $\theta_\tau e^{-tD_\tau}$ is a smoothing operator, we have
    $$\str\bigl(\theta_\tau e^{-tD_\tau}\bigr) = \int_M \mathrm{sTr}\bigl(\theta_\tau K(\tau,t,x,y)|_{x=y}\bigr) = \mathrm{str}_{L^2}\bigl(\theta_\tau e^{-tD_\tau}\bigr),$$
    where $\mathrm{sTr}$ denotes the fiberwise supertrace in the bundle $\wedge^\bullet T^*M\otimes E$ and $\mathrm{str}_{L^2}$ is the usual $L^2$ super trace of a trace-class operator. Using the above identifications we split the integral over $t$ into three parts:
    \begin{equation}
    \label{three_terms}
    \begin{split}
        \int_0^\infty \str\bigl(\theta_\tau e^{-tD_\tau}\bigr) t^{s-1}e^{-\lambda t}\,dt &= \int_1^\infty \mathrm{str}_{L^2}\bigl(\theta_\tau e^{-tD_\tau}\bigr) t^{s-1}e^{-\lambda t}\,dt \\
        &+ \int_0^1\int_M \mathrm{sTr}\bigl(\theta_\tau K(\tau,t,x,y)|_{x=y}-\theta_\tau K_N(\tau,t,x,y)|_{x=y}\bigr) t^{s-1}e^{-\lambda t}\,dt \\
        &+ \int_0^1\int_M \mathrm{sTr}\bigl(\theta_\tau K_N(\tau,t,x,y)|_{x=y}\bigr) t^{s-1}e^{-\lambda t}\,dt,
    \end{split}
    \end{equation}
    where $K_N$ is the approximate heat kernel constructed in Lemma \ref{lemma_approx_heat_kernel} of the appendix.

    To estimate the first term, note that the $L^2$ trace can be written in terms of the eigenvalues of the positive self-adjoint operator $D_\tau$. Indeed, denoting by $\lambda_{\tau,1}\leq \lambda_{\tau,2}\leq\cdots$ the positive eigenvalues counted with multiplicty and by $\phi_{\tau,j}$ a corresponding orthonormal set of eigenfunctions, we find
    $$\bigl|\mathrm{str}_{L^2}\bigl(\theta_\tau e^{-tD_\tau}\bigr)\bigr| \leq \sum_{j=1}^\infty e^{-t\lambda_{\tau,j}}\bigl|\pair{\phi_{\tau,j}}{\theta_\tau\phi_{\tau,j}}_{L^2}\bigr| \leq C\sum_{j=1}^\infty(1+\lambda_{\tau,j})e^{-t\lambda_{\tau,j}}$$
    locally uniformly in $\tau$.
    Here, we used elliptic regularity for the order $m$ elliptic operator $D_\tau$ to estimate
    $$\bigl|\pair{\phi_{\tau,j}}{\theta_\tau\phi_{\tau,j}}_{L^2} \leq \|\theta_\tau\phi_{\tau,j}\|_{L^2} \leq C\|\phi_{\tau,j}\|_{H^l} \leq C\bigl(\|D_\tau\phi_{\tau,j}\|_{L^2} + \|\phi_{\tau,j}\|_{L^2}\bigr),$$
    where $l\leq m$ is the order of the differential operator $\theta_\tau$. Note that the norms above are $\tau$-dependent but due to the smoothness with respect to $\tau$ the estimate is uniform for $\tau$ in compacta. The first integral in \eqref{three_terms} can now be bounded by
    \begin{equation*}
        \int_1^\infty \bigl|\mathrm{str}_{L^2}\bigl(\theta_\tau e^{-tD_\tau}\bigr) t^{s-1}e^{-\lambda t}\bigr|\,dt \leq C\sum_{j=1}^\infty (1+\lambda_{\tau,j}) \int_1^\infty e^{-t(\lambda_{\tau,j}+\Re(\lambda))} t^{\Re(s)-1}\,dt.
    \end{equation*}
    The integral over $t$ satisfies
    $$\int_1^\infty e^{-t(\lambda_{\tau,j}+\Re(\lambda))} t^{\Re(s)-1}\,dt \leq e^{-(\lambda_{\tau,j}+\Re(\lambda))} \int_0^\infty e^{-t(\lambda_{\tau,1}+\Re(\lambda))} (1+t)^{\Re(s)-1}\,dt \leq Ce^{-(\lambda_{\tau,j}+\Re(\lambda))} $$
    uniformly for $\tau$ in a compact subset $I\subset(-1,1)$, $\Re(\lambda) > -\frac{1}{2}\min_{\tau\in I}(\lambda_{\tau,1})$ and $s$ in any compact subset of $\mathbb{C}$. Thus, we find
    \begin{equation}
    \label{elliptic_trace_sum}
        \int_1^\infty \bigl|\mathrm{str}_{L^2}\bigl(\theta_\tau e^{-tD_\tau}\bigr) t^{s-1}e^{-\lambda t}\bigr|\,dt \leq C\sum_{j=1}^\infty (1+\lambda_{\tau,j})e^{-(\lambda_{\tau,j}+\Re(\lambda))}.
    \end{equation}
    We now use that the eigenvalues of $D_\tau$ satisfy the following growth estimate uniformly for $\tau\in I$:
    \begin{equation}
    \label{eigenvalue_growth}
        \lambda_{\tau,j} \geq Cj^\delta, \quad\text{for some }\, C,\delta > 0.
    \end{equation}
    This can be inferred from Weyl's law for a self-adjoint elliptic operator, see for instance \cite{hormander_weyl_law}. In fact, the weaker asymptotics of \eqref{eigenvalue_growth} follow by the more straightforward arguments in the proof of \cite[Lemma 1.6.3]{gilkey}, where uniformity for $\tau$ contained in a compact subset of $(-1,1)$ follows from the smoothness of $\tau\to D_\tau$ and the inner product $\tau\to\pair{\cdot}{\cdot}_\tau$. Thus, the sum over $j$ in \eqref{elliptic_trace_sum} converges locally uniformly in $\tau$. We have shown that for an $\epsilon>0$, which is smaller than the minimal eigenvalue of $D_\tau$ for $\tau\in I$, the integral converges locally uniformly for all $(s,\lambda) \in \mathbb{C}\times\{\Re(\lambda)>-\epsilon\}$. Thus, the first term in \eqref{three_terms} defines an analytic function in this domain.
    
    For the second term, we note that $K_N$ provides a good approximation to $K$ at small $t$. Indeed, \eqref{volterra_series} together with the estimates in \eqref{convolution_estimate} imply that
    $$\|K(\tau,t,x,y) - K_N(\tau,t,x,y)\|_{C^l(M\times M)} \leq C t^{\frac{N-n-l}{m}}, \quad \forall\, t\in (0,1)$$
    locally uniformly in $\tau$. Choosing $N\geq m(n+l+1)$, where $l$ denotes the order of $\theta_\tau$, we can bound
    $$\int_M \bigl|\mathrm{sTr}\bigl(\theta_\tau K(\tau,t,x,y)|_{x=y}\bigr| \leq Ct, \quad \forall\, t\in (0,1)$$
    uniformly for $\tau$ in compacta. Thus, the second integral in \eqref{three_terms} converges locally uniformly for all $(s,\lambda) \in \{\Re(s)>-1\}\times\mathbb{C}$ to an analytic function in this domain.

    For the third term we use the well known heat kernel asymptotics, see for instance \cite[Lemma 1.7.7]{gilkey}. We find
    \begin{equation}
    \label{heat_kernel_expansion}
        \int_M \mathrm{sTr}\bigl(\theta_\tau K_N(\tau,t,x,y)|_{x=y}\bigr) = \sum_{k=0}^{N} t^{\frac{k-l-n}{m}}B_k(\tau), \quad \forall\, t\in(0,1),
    \end{equation}
    where the $B_k$ depend smoothly on $\tau$ and $B_k=0$ if $k+l$ is odd. The last integral in \eqref{three_terms} can now be seen to converge locally uniformly for all $(s,\lambda) \in \{\Re(s)>l+n\}\times\mathbb{C}$. Note that the $B_k$ can be computed from the parametrix construction of Lemma \ref{lemma_approx_heat_kernel} in the appendix. Indeed, working in local coordinates and applying the order $l$ differential operator $\theta_\tau$ to \eqref{approx_heat_kernel}, we see that
    $$\theta_\tau K_N(\tau,t,x,y) = \sum_{0\leq k < N}\frac{1}{(2\pi)^n}\frac{1}{2\pi i}\int_{\R^n}\int_\gamma e^{i(x-y)\xi-t\lambda}b_{k}(\tau,x,\xi,\lambda)\,d\lambda d\xi,$$
    where $b_k \in S^{l-m-k}_{\Lambda,m}(U;\mathbb{C}^{r\times r})$ is a homogeneous symbol of order $l-m-k$ depending smoothly on $\tau$. The $b_k$ are obtained by applying $\theta_\tau$ to the symbol $q^N$ and collecting terms of the same order. Restricting to the diagonal and using homogeneity of the symbols, as in the proof of Lemma \ref{lemma_kernel_local}, we find for $t\in(0,1)$:
    \begin{equation}
    \label{heat_kernel_invariants}
        \theta_\tau K_N(\tau,t,x,y)|_{x=y} = \sum_{k=0}^{N}\int_{\R^n}\int_\gamma e^{-t\lambda}b_{k}(\tau,x,\xi,\lambda)\,d\lambda d\xi = \sum_{k=0}^{N}t^{\frac{k-l-n}{m}}\int_{\R^n}\int_\gamma e^{-\lambda}b_{k}(\tau,x,\xi,\lambda)\,d\lambda d\xi.
    \end{equation}
    The small $t$ expansion in \eqref{heat_kernel_expansion} follows by taking the supertrace in the fibers of the vector bundle and integrating over $M$. The $B_k(\tau)$ can be computed in local coordinates from \eqref{heat_kernel_invariants}. 

    We finally turn to the analytic continuation of the expression in \eqref{three_terms}, which is Assumption \ref{assumption_analytic_continuation}. The integrals all converge in a neighborhood of $\lambda=0$. We can thus evaluate \eqref{three_terms} at $\lambda=0$ and only need to prove analytic continuation in $s$ to $s=0$. Note that evaluating \eqref{three_terms} at $\lambda=0$ and multiplying with $-\frac{s}{\Gamma(s)}$ gives precisely the $\tau$-derivative of the function $F(\tau,0,s)$ used to define the restricted flat determinant, see Remark \ref{rmk_eliminating_lambda_s}. Furthermore, the first two terms in \eqref{three_terms} are already analytic in $s$ for all $\Re(s)>-1$. For the third term in \eqref{three_terms}, we set $\lambda=0$ and use the heat kernel asymptotics of \eqref{heat_kernel_expansion} to obtain for $\Re(s)>n+l$:
    $$\int_0^1\int_M \mathrm{sTr}\bigl(\theta_\tau K_N(\tau,t,x,y)|_{x=y}\bigr) t^{s-1}\,dt = \sum_{k=0}^{N} \Bigl(s-\frac{k-l-n}{m}\Bigr)^{-1}B_k(\tau).$$
    This clearly extends meromorphically to $\Re(s)>-1$ with poles at $\frac{k-l-n}{m}$ and corresponding residues $B_k(\tau)$. It remains to note that $B_k(\tau)=0$ when $k+l$ is odd, see \cite[Lemma 1.7.7]{gilkey}. Since $M$ has odd dimension $n$ by assumption, this shows that $s=0$ is not a pole of the analytic continuation. Note that the boundedness required in Assumption \ref{assumption_analytic_continuation} follows from the smoothness of the $B_m(\tau)$.
\end{proof}

\section{Local constancy of the value at zero of the Ruelle zeta function}
\label{section_ruelle_independence}

In this section we prove the local constancy of the value at zero of the Ruelle zeta function for a family of regular Anosov--Reeb vector fields. This result was first proved in \cite{viet_dang_fried_conjecture}, but we view it as a consequence of our general framework, and the otherwise known microanalytic behavior of the zeta function. This requires interpreting the (value at zero of the) Ruelle zeta function as a regularized determinant. See \cite{zworski,hadfield_kandel_schiavina,Schiavina_Stucker2}.

\subsection{The contact general codifferential}
\label{subsection_contact_gauge}
Consider an odd-dimensional smooth manifold $M$, equipped with a contact form $\alpha\in\Omega^1(M)$, i.e.\ such that $\alpha\wedge (d\alpha)^{\frac{\dim(M)-1}{2}}$ is a volume form. Let $X$ be the Reeb vector field associated to $\alpha$, and recall that $X$ is the unique vector field satisfying
\begin{equation}
\label{Reeb}
    \iota_X \alpha = 1, \qquad \iota_X d\alpha = 0,
\end{equation}
where $\iota_X$ denotes contraction with $X$. 

\begin{lemma}\label{lem_contactinvariantsplit}
    Let $X$ be the Reeb vector field of a contact form $\alpha$. Then contraction with $X$ defines an acyclic general codifferential $\delta=\iota_X$ in the sense of Definition \ref{def_general_codifferential}, with splitting given by
    \begin{equation}
    \label{splitting_contact_gauge}
        \Omega^\bullet(M,E) = \im(\iota_X)\oplus\im(\alpha\wedge) \doteq L\oplus C,
    \end{equation}
    and the characteristic operator is the Lie derivative with respect to $X$ (twisted by the connection):
    $$D = \L_X.$$
    Moreover, the characteristic operator leaves the splitting in \eqref{splitting_contact_gauge} invariant.
\end{lemma}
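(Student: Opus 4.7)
The proof reduces to unpacking three standard identities involving the Reeb vector field and contraction, applied in the twisted de Rham setting.

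First, I would verify that $\iota_X$ fits the definition of a general codifferential. It is a differential operator of degree $-1$; nilpotency $\iota_X^2=0$ follows from the graded-derivation property of an odd-degree contraction. For graded symmetry, apply the Leibniz rule
\[
\iota_X(\omega\wedge\eta)=\iota_X\omega\wedge\eta+(-1)^{|\omega|}\omega\wedge\iota_X\eta
\]
to a pair $(\omega,\eta)$ whose total degree is $n+1$: the left-hand side vanishes identically on an $n$-manifold, which, after taking the $\pair{\cdot}{\cdot}_E$ inner product and integrating the top-form part, gives the required graded symmetry of $\iota_X$ with respect to the pairing \eqref{e_pairing}.

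Second, I would establish the decomposition \eqref{splitting_contact_gauge} via the pointwise identity
\[
\omega=\iota_X(\alpha\wedge\omega)+\alpha\wedge\iota_X\omega, \qquad \omega\in\Omega^\bullet(M,E),
\]
which follows by applying the Leibniz rule to $\iota_X(\alpha\wedge\omega)$ and using $\iota_X\alpha=1$. This immediately shows $\Omega^\bullet(M,E)=\im(\iota_X)+\im(\alpha\wedge)$. For directness, if $\omega=\iota_X\gamma=\alpha\wedge\beta$ then $\iota_X\omega=0$ forces $\beta=\alpha\wedge\iota_X\beta$ by the same identity, so $\omega=\alpha\wedge\beta=\alpha\wedge\alpha\wedge\iota_X\beta=0$. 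For acyclicity of $(\Omega^\bullet(M,E),\iota_X)$, the same identity shows that any $\omega\in\ker(\iota_X)$ reduces to $\iota_X(\alpha\wedge\omega)\in\im(\iota_X)$, so $\ker(\iota_X)=\im(\iota_X)$ (the reverse inclusion is $\iota_X^2=0$).

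Third, the identification of the characteristic operator is Cartan's magic formula in the twisted setting: $D=[\iota_X,d_\nabla]=\iota_X d_\nabla+d_\nabla\iota_X=\L_X$, where $\L_X$ denotes the twisted Lie derivative. Since $\iota_X$ and $d_\nabla$ are both graded derivations of the wedge product (the flatness of $\nabla$ ensuring $d_\nabla^2=0$), so is $\L_X$.

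Finally, to show that $\L_X$ preserves the splitting, I would observe that $\L_X$ commutes with $\iota_X$, as follows directly by expanding $\L_X\iota_X-\iota_X\L_X=(\iota_X d_\nabla+d_\nabla\iota_X)\iota_X-\iota_X(\iota_X d_\nabla+d_\nabla\iota_X)$ and cancelling using $\iota_X^2=0$; this gives $\L_X(\im\iota_X)\subset\im\iota_X$. For $\im(\alpha\wedge)$, the defining Reeb identities \eqref{Reeb} give
\[
\L_X\alpha=d(\iota_X\alpha)+\iota_X d\alpha=d(1)+0=0,
\]
so by the derivation property $\L_X(\alpha\wedge\eta)=\alpha\wedge\L_X\eta\in\im(\alpha\wedge)$. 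The only genuine subtlety is the sign bookkeeping in the graded symmetry of $\iota_X$, but once that is in hand the remainder of the statement is a direct assembly of these observations.
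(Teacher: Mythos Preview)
Your proof is correct and follows essentially the same route as the paper. The only minor difference is in how the splitting is established: you use the explicit pointwise identity $\omega=\iota_X(\alpha\wedge\omega)+\alpha\wedge\iota_X\omega$ to produce the decomposition and acyclicity in one stroke, whereas the paper phrases the same fact via the bundle decomposition $T^*M=\R\alpha\oplus T_0^*M$ (the conormal to $X$) and attributes acyclicity simply to the non-vanishing of $X$; these are equivalent, and your version is arguably more transparent.
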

\begin{proof}
    We can view $\iota_X: \Omega^\bullet(M,E) \to \Omega^{\bullet-1}(M,E)$ as a zeroth order differential operator, which indeed satisfies $\iota_X\circ\iota_X = 0$. Moreover, we have
    $$\int_M\langle\iota_X\omega\wedge\eta\rangle_E = (-1)^{k+1}\int_M\langle\omega\wedge\iota_X\eta\rangle_E, \quad\mathrm{for}\quad \omega\in\Omega^k(M,E), \,\eta\in\Omega^{n-k+1}(M,E).$$
    Thanks to the non-vanishing of the vector field $X$ on $M$, the complex defined by $\iota_X$ is acyclic. Denote by $T^*_0M$ the conormal bundle to $X$, i.e.\ $(T^*_0M)_x$ are the cotangent vectors annihilating $X(x)$. Then we have a splitting $T^*M = \R\alpha \oplus T^*_0M$.
    Transferring this decomposition to the space of $E$-valued $k$-forms gives the claimed splitting
    \begin{equation}
    \label{contact_splitting}
        \Omega^k(M,E) = \Omega_0^k(M,E) \oplus \alpha\wedge\Omega_0^{k-1}(M,E),
    \end{equation}
    where 
    $$\Omega_0^k(M,E) = \ker(\iota_X)\cap\Omega^k(M,E) = \im(\iota_X)\cap\Omega^k(M,E).$$
    The characteristic operator $D=[\iota_X,\d]=\L_X$ gives the (twisted) Lie derivative by the Cartan formula.
    Finally, by definition of the Reeb vector field, see \eqref{Reeb}, we have
    \begin{equation*}
        \L_X\alpha = (d\iota_X + \iota_Xd){\alpha} = d(1) + \iota_Xd{\alpha} = 0.
    \end{equation*}
    Thus, for any $\omega \in \Omega^\bullet(M,E)$:
    \begin{equation*}
        \L_X({\alpha}\wedge\omega) = (\L_X{\alpha})\wedge\omega + {\alpha}\wedge(\L_X\omega) = {\alpha}\wedge(\L_X\omega).
    \end{equation*}
    So $\L_X$ leaves $\im({\alpha}\wedge)$ invariant.
\end{proof}

\begin{cor}\label{cor_codiffReeb}
    Let $X$ be as in Lemma \ref{lem_contactinvariantsplit}. If furthermore $\ker(\L_X)=\{0\}$, then $\delta=\iota_X$ is an acyclic and transversal general codifferential in the sense of Definition \ref{def_codifferential_types}.
\end{cor}

Smoothly varying the contact form and its Reeb vector field gives rise to an inner variation of general codifferentials, as we show in the following Lemma.

\begin{lemma}
\label{contact_variation}
    Let $\tau\in(-1,1)\to \alpha(\tau)$ be a smooth family of contact forms on $M$ and $X(\tau)$ the associated Reeb vector fields. Then the family of general codifferentials $\iota_{X(\tau)}$ is integrable in the sense of Definition \ref{def_homotopy_structure}. In particular, $\tau \to \iota_{X(\tau)}$ forms an inner variation of general codifferentials.
\end{lemma}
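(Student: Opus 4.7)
The strategy is to exhibit $\tau\mapsto\iota_{X(\tau)}$ as an integrable family in the sense of Definition~\ref{def_homotopy_structure}, since the lemma preceding that definition then yields the inner variation structure for free. Concretely, I would produce a smooth family of bundle automorphisms of $\wedge^\bullet T^*M\otimes E$ covering $\mathrm{id}_M$ that conjugates $\iota_{X(0)}$ into $\iota_{X(\tau)}$.

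The core of the argument is the construction of a smooth family $A_\tau\in\Gamma(\mathrm{End}(TM))$ of bundle automorphisms over $\mathrm{id}_M$ with $A_0=\mathrm{id}_{TM}$ and $A_\tau(X(0))=X(\tau)$ for all $\tau\in(-1,1)$. The plan is to take $A_\tau$ as the solution of the pointwise linear ODE
\[
\tfrac{d}{d\tau}A_\tau=\Theta_\tau\circ A_\tau,\qquad A_0=\mathrm{id}_{TM},
\]
where $\Theta_\tau\in\Gamma(\mathrm{End}(TM))$ is the rank-one endomorphism $v\mapsto\alpha(\tau)(v)\,\dot{X}(\tau)$. Since $\alpha(\tau)(X(\tau))=1$ by the Reeb condition, one has $\Theta_\tau(X(\tau))=\dot{X}(\tau)$, so that both $\tau\mapsto A_\tau X(0)$ and $\tau\mapsto X(\tau)$ satisfy the same linear ODE with the same initial value $X(0)$, and agree for all $\tau$ by uniqueness. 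Because the ODE is linear in $A_\tau$ with smooth $\tau$-dependent coefficients on the compact manifold $M$, the solution exists globally on $(-1,1)$, depends smoothly on $(\tau,x)$, and remains everywhere invertible (as a time-ordered exponential of $\Theta_\tau$).

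Next, I would dualize $A_\tau$ to $(A_\tau^{-1})^*$ on $T^*M$, extend multiplicatively to $\wedge^\bullet T^*M$, and tensor with $\mathrm{id}_E$ to obtain a smooth family of bundle automorphisms of $\wedge^\bullet T^*M\otimes E$. The induced action $\beta_\tau$ on sections is then a smooth family of invertible, degree-preserving, zeroth-order differential operators---in particular local maps---on $\Omega^\bullet(M,E)$. The relation $\iota_{X(\tau)}=\beta_\tau\circ\iota_{X(0)}\circ\beta_\tau^{-1}$ is then a direct naturality check: for $\omega\in\Omega^k(M,E)$ and $v_1,\ldots,v_{k-1}\in T_xM$,
\[
\bigl(\beta_\tau\iota_{X(0)}\beta_\tau^{-1}\omega\bigr)(v_1,\ldots,v_{k-1})=\omega(A_\tau X(0),v_1,\ldots,v_{k-1})=(\iota_{X(\tau)}\omega)(v_1,\ldots,v_{k-1}),
\]
establishing \eqref{e:regGFhomotopy}. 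Integrability implies inner variation by the earlier lemma, which completes the proof.

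The delicate point is precisely the global existence of a smooth and everywhere-invertible family $A_\tau$ on all of $(-1,1)$. A naive fiberwise construction such as $A_\tau=\mathrm{id}_{TM}+\alpha(0)\otimes(X(\tau)-X(0))$ satisfies $A_\tau(X(0))=X(\tau)$, but its determinant equals $\alpha(0)(X(\tau))$, which is only guaranteed to be nonzero for $\tau$ near $0$ and can change sign for large $\tau$. The linear-ODE formulation circumvents this entirely by exploiting that the contact form $\alpha(\tau)$ always pairs nontrivially with its own Reeb vector field, which isolates a canonical endomorphism $\Theta_\tau$ sending $X(\tau)$ to $\dot X(\tau)$ and makes invertibility automatic. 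Once $A_\tau$ is in hand, the rest of the argument is a routine naturality verification.
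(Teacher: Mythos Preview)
Your proof is correct and follows essentially the same approach as the paper: construct a smooth family of invertible bundle endomorphisms $A_\tau$ of $TM$ sending $X(0)$ to $X(\tau)$, dualize and extend to $\wedge^\bullet T^*M\otimes E$, and verify the conjugation identity $\iota_{X(\tau)}=\beta_\tau\iota_{X(0)}\beta_\tau^{-1}$. The only difference is that the paper cites \cite[Section~4]{viet_dang_fried_conjecture} for the existence of such a family (there denoted $S(\tau)$), whereas you supply a self-contained ODE construction that also transparently ensures global invertibility on all of $(-1,1)$.
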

\begin{proof}
    Since the vector fields ${X_\tau}$ are non-vanishing and depend smoothly on $\tau$, we can find a smooth family $S(\tau): TM\to TM$ of invertible endomorphism of the tangent bundle, such that
    \begin{equation*}
        S(\tau)(x){X_0}(x) = {X_\tau}(x), \qquad \forall\,x\in M,
    \end{equation*}
    see \cite[Section 4]{viet_dang_fried_conjecture}. Letting $S(\tau)^\top: T^*M \to T^*M$ denote the transpose of $S(\tau)$, we find for any $k$-form $\omega \in \Omega^k(M)$, and any vector fields $Y^1,\dots,Y^{k-1}$:
    \begin{equation*}
    \begin{split}
        \bigl(\wedge^{k-1}S(\tau)^\top\bigr)^{-1}&\circ\iota_{{X_0}}\circ\wedge^k S(\tau)^\top(\omega)(Y^1,\dots,Y^{k-1}) \\
        &= \wedge^k S(\tau)^\top(\omega)({X_0},S(\tau)^{-1}Y^1,\dots,S(\tau)^{-1}Y^{k-1}) \\
        &= \omega(S(\tau){X_0},Y^1,\dots,Y^{k-1}) = \omega({X_\tau},Y^1,\dots,Y^{k-1}) = \iota_{X(\tau)}(\omega)(Y^1,\dots,Y^{k-1}).
    \end{split}
    \end{equation*}
    Therefore,
    $$\iota_{{X_\tau}} = \bigl(\wedge^{k-1}S(\tau)^\top\bigr)^{-1}\circ\iota_{{X_0}}\circ\wedge^k S(\tau)^\top,\qquad\mathrm{on}\quad \Omega^k(M).$$
    Defining the degree-preserving bundle automorphism 
    $${\beta}_{\tau} = \wedge^\bullet \bigl(S(\tau)^\top\bigr)^{-1}\otimes 1 : \wedge^\bullet T^*M\otimes E \to \wedge^\bullet T^*M\otimes E,$$
    we see that the general codifferentials satisfy $\iota_{{X_\tau}} = \beta_\tau\circ\iota_{{X_0}}\circ\beta_{\tau}^{-1}$ on $\Omega^\bullet(M,E)$, i.e. they form an integrable family. In particular, the family $\iota_{X(\tau)}$ is an inner variation, satisfying
    \begin{equation*}
        \frac{d}{d\tau}\iota_{X(\tau)} = [\theta_\tau,\iota_{X(\tau)}], \quad\text{with}\quad \theta_\tau = \bigl(\tfrac{d}{d\tau}\beta_\tau\bigr)\beta_\tau^{-1}.
    \end{equation*}
\end{proof}

\subsection{Anosov flows and the Ruelle zeta function}

We have seen that the Reeb vector field of a contact form defines a general codifferential $\iota_X$ with characteristic operator $\L_X$. In general $\iota_X$ will not be regular, i.e. $\L_X$ will not have a well-defined flat superdeterminant. We thus restrict to a special class of vector fields where the flat superdeterminant can be obtained, namely when $X$ generates an Anosov flow on $M$.

\begin{defn}
\label{def_Anosov}
The flow $\phi_t\colon M\to M$ of a vector field $X\in\C(M,TM)$ is called Anosov if there exists a continuous splitting of the tangent bundle:
\begin{equation*}
    T_xM = E_0(x) \oplus E_s(x) \oplus E_u(x), \quad\text{with }\, E_0(x) = \R X(x),
\end{equation*}
which is invariant under $d\phi_t$, i.e.\ $d\phi_t(E_\bullet(x)) = E_\bullet(\phi_t(x))$ for $\bullet = 0, s, u$, and for some constants $C, \mu >0$, we have for all $t \ge 0$:
\begin{equation*}
    \forall v \in E_s(x):\, \|d\phi_t(x)v\| \le Ce^{-\mu t}\|v\|, \quad \forall v \in E_u(x):\, \|d\phi_{-t}(x)v\| \le Ce^{-\mu t}\|v\|.
\end{equation*}
\end{defn}
Here $\|\cdot\|$ is the norm induced by some Riemannian metric on $M$. The Anosov property, although not the specific values of the constants $C, \mu$, is independent of this choice of metric. 
$E_s$ is called the stable bundle and $E_u$ the unstable bundle. We will assume in addition that the the stable and unstable bundles are orientable.
When $X$ is Anosov, the decomposition in \eqref{contact_splitting} can also be viewed in terms of the stable and unstable bundles of the Anosov flow, in that the splitting in Definition \ref{def_Anosov} induces a dual splitting of the cotangent space: $T^*M = E_0^* \oplus E_s^* \oplus E_u^*$ and we have $E_0^* = \R\alpha$, $E_s^* \oplus E_u^* = T^*_0M$.

\begin{rmk}\label{rmk:gooddecompositionaxial}
We observe that the operator $\L_X=[\iota_X,\d]$ is not very well behaved on the space of \emph{smooth} differential forms. In particular, $\ker(\L_X)$ should be viewed in terms of the Pollicott-Ruelle resonances of \eqref{resolvent_continuation} below and hence as a subspace of the anisotropic Sobolev spaces $H_{sG}$, see \cite{zworski}. So one cannot expect to get the Hodge-like decomposition\footnote{This also holds for Morse--Smale flows.}
\[
\Omega^\bullet(M,E) \not\simeq \im(\iota_X) \oplus \im(\d)\oplus \ker(\L_X),
\]
though see \cite{dang_riviere} for a cohomological perspective on Pollicott-Ruelle resonant states.
Even when $\ker(\L_X)$ is trivial, $\L_X$ is generally not surjective when acting on $\Omega^\bullet(M,E)$ and the splitting above does not hold in the strict sense, although a smooth differential form can then be written as a sum of distributional forms in $\im(\iota_X)$ and $\im(\d)$, see \cite{Schiavina_Stucker2}.
Compare this observation with Remark \ref{rmk:codiff_GF}, where an alternative definition of codifferential is considered. What we use in the contact Anosov case is the decomposition provided by the complementary acyclic complex $\alpha\wedge \circlearrowright \Omega^\bullet(M,E)$, so that
\[
\Omega^\bullet(M,E) = \im(\iota_X) \oplus \im(\alpha\wedge),
\]
regardless of the properties of $\ker(\L_X)$.
\end{rmk}

\begin{rmk}
The scenario we will consider in what follows is given by $X$ both Reeb and Anosov. Our main example is provided by the geodesic vector field on the unit cotangent bundle: When $(N,g)$ is a compact Riemannian manifold with strictly negative sectional curvature, then the geodesic flow on the cosphere bundle $S^*N$ is Anosov and its generator is the Reeb vector field for the contact form given by pulling back the tautological one-form on $T^*N$. 
\end{rmk}

The Anosov property has many implications on the structure of the orbits of $\phi_{t}$. We will mostly be interested in the set of closed, or periodic, orbits $\mathcal{P}$. We denote the period of a closed orbit $\gamma \in \P$ by $T_\gamma$. Definition \ref{def_Anosov} together with the compactness of $M$ gives
\begin{equation}
\label{min_period}
    \exists\, T_0>0, \quad\text{such that}\quad T_\gamma > T_0, \quad \forall\,\gamma\in\P.
\end{equation}
Moreover, see \cite[Lemma 2.2]{zworski}, one can show that $\P$ is countable and obtain a bound on the growth of the periods of closed orbits:
there exists some $C,L>0$, such that the number of closed orbits with period less than $T$ satisfies
\begin{equation}
\label{period_growth}
    \bigl|\bigl\{\gamma \in \P \,\mid\, T_\gamma \le T \bigr\}\bigr| \leq Ce^{(2n-1)LT}, \quad \forall\, T>0.
\end{equation}

When $X$ is Anosov, the twisted Lie derivative $\L_X=[\iota_X,\d]$ has discrete spectrum on certain anisotropic Sobolev spaces $H_{sG}$. Roughly speaking, these are spaces of distributions with Sobolev regularity $s$ in the stable directions and $-s$ in the unstable directions, see \cite[Section 3.1]{zworski} for a precise definition. By \cite[Proposition 3.3]{zworski}, for any $C>0$ one can choose $s$ large enough so that the resolvent
\begin{equation}
\label{resolvent_continuation}
    R_X(\lambda) = (\L_X+\lambda)^{-1}: H_{sG}\to H_{sG},
\end{equation}
defines a meromorphic family of operators in $\{\Re(\lambda) > -C\}$, see also \cite{butterley_liverani,faure_sjostrand}. The poles of this meromorphic extension are called \emph{Pollicott-Ruelle resonances}.

\begin{defn}
\label{def_regular_anosov}
    We will say that an Anosov vector field $X$ is \emph{regular}, if $\lambda=0$ is not a Pollicott-Ruelle resonance of the twisted Lie derivative $\L_X$.
\end{defn}

\noindent Note that for regular $X$ the resolvent $R_X(\lambda)$ is analytic in a neighborhood of $\lambda=0$.

Another important object associated to an Anosov flow is its Ruelle zeta function.
Note that the flat connection $\nabla$ on $E$ induces a unitary representation of the fundamental group,
$$\rho \colon \pi_1(M) \to U(\mathbb{C}^r),$$
where $r$ is the rank of $E$ and $\rho([\gamma])$ is defined to be the parallel transport map with respect to $\nabla$ along a representative $\gamma$ of $[\gamma] \in \pi_1(M)$, which is independent of the basepoint up to conjugacy. The Ruelle zeta function is defined as a product over the set of primitive closed orbits $\mathcal{P}^\#$ of the Anosov flow generated by $X$, akin to how the Riemann Zeta function is given as a product over the prime numbers. It is a function $\zeta_{X,\rho}(\lambda)$ of a complex parameter $\lambda$ and can be written for $\Re(\lambda) \gg 1$ as the following convergent product.\footnote{Convergence for large $\Re(\lambda)$ can be justified using the bound in \eqref{period_growth}.}
\begin{defn}
The \emph{Ruelle zeta function} is defined in $\Re(\lambda)\gg 1$ by
\begin{equation}
\label{ruelle_zeta_function}
    \zeta_{X,\rho}(\lambda) = \prod_{\gamma\in\mathcal{P}^\#} \mathrm{det}\bigl(1-\rho([\gamma])e^{-\lambda T_\gamma}\bigr).
\end{equation}
\end{defn}
\noindent It is a nontrivial result of \cite{GiuliettiLiveraniPollicott} that the Ruelle zeta function has a meromorphic extension to $\mathbb{C}$, see also \cite{zworski} for the perspective followed here.

\medskip

When $X$ is both Anosov and contact, we can think of $\delta=\iota_X$ as a general codifferential, so that $L_X \doteq \im(\delta)$ is an isotropic subspace, and the Lie derivative is the associated characteristic operator $D=[\delta,\d]=\L_X$.
At this stage, it is not at all clear that either the flat determinant of $\L_X$, or the value $\zeta_{X}(0)$, should actually be well-defined. In what follows we will present a useful, alternative, description of the Ruelle zeta function in terms of flat traces, following \cite{zworski}.

Note that $\L_X$ is the generator for the pull-back by the flow of $X$, i.e. $e^{-t\L_X}=\phi_{-t}^*$. The Schwartz kernel $K(t,x,y)$ of $\phi_{-t}^*$ is supported on the smooth submanifold $\{y=\phi_{-t}(x)\}\subset \R_+\times M\times M$; in fact, in local coordinates its components are smooth multiples of the delta function on this surface. Thus, its wavefront set lies in the conormal bundle to this submanifold, see \cite[Theorem 8.2.4]{hormander1},
\begin{equation}
\label{wavefront_anosov}
    \WF(K) \subseteq \bigl\{(t,-\xi\cdot X(x), x, \xi, \phi_{-t}(x), -d\phi_{t}(\phi_{-t}(x))^\top\cdot \xi) \,\,|\,\, t\in\R_+, (x,\xi)\in T^*M\bigr\}.
\end{equation}
The Anosov property ensures that this set is disjoint from the conormal to the diagonal map $\iota:(t,x)\to(t,x,x)$, that is
\begin{equation}
\label{wavefront_condition}
    \WF(K) \cap \bigl\{(t,0,x,\xi,x,-\xi) \,\,|\,\, t\in\R_+, (x,\xi)\in T^*M\bigr\} = \emptyset.
\end{equation}
Indeed, if the left hand side were non-empty, then there would be a non-zero $t$ and $(x,\xi)\in T^*M$ satisfying
\begin{equation*}
    \xi\cdot X(x) = 0, \quad x=\phi_t(x), \quad \xi = d\phi_t(x)^\top\cdot \xi.
\end{equation*}
The first equation implies that $\xi \in E_s^*(x)\oplus E_u^*(x)$ and by iterating the third equation we see that $\xi = d\phi_{Nt}(x)^\top\cdot \xi$ for all $N\in\Z$. Since $d\phi_t(x)$ preserves the splitting, writing $\xi = \xi_s + \xi_u$ with $\xi_s\in E_s^*(x)$ and $\xi_u\in E_u^*(x)$, we find $\xi_s = d\phi_{Nt}(x)^\top\cdot\xi_s$ for all $N$. Taking $N \to\infty$ and using the Anosov property shows that $\xi_s=0$. Similarly, we obtain $\xi_u=0$ by taking $N\to -\infty$ (note that the dual stable and unstable bundles exhibit expanding/contracting behavior similar to Definition \ref{def_Anosov}). However, by definition, the wavefront set is disjoint from the zero section, so we obtain a contradiction.

By \eqref{wavefront_condition} the flat trace of $e^{-t\L_X}$ is well-defined as a distribution on $\R_+$. Moreover, using the projection operator $\iota_X\circ\alpha\wedge$ to project onto the image of $\iota_X$, we can define the flat super trace restricted to $\im(\iota_X)=\Omega_0^\bullet(M,E)$. This flat trace in fact has an explicit expression as a sum of delta functions known as the Guillemin trace formula, see \cite[Theorem 8]{guillemin_trace_formula} or \cite[Appendix B]{zworski} for a proof.
\begin{lemma}[Guillemin Trace Formula]
\label{guillemin_thm}
The flat trace of $\exp(-t\L_X|_{\Omega_0^k(M,E)})$ is well-defined as a distribution on $\R_+$ when $X$ is Anosov, and is given by
\begin{equation}
\label{guillemin}
    \tr\big(e^{-t\L_X}\big|_{\Omega_0^k(M,E)}\big) = \sum_{\gamma\in\mathcal{P}} \frac{T^\#_\gamma\, \mathrm{tr}\big(\wedge^kP_\gamma\big)\,\mathrm{tr}\big(\rho([\gamma])\big)}{|\mathrm{det}(I-P_\gamma)|}\, \delta(t-T_\gamma).
\end{equation}
\end{lemma}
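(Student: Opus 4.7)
The proof is a direct microlocal computation of the flat trace of $e^{-t\L_X}=\phi_{-t}^*$, restricted to the subspace $\Omega_0^k(M,E)=\im(\iota_X)\cap\Omega^k(M,E)$. First, I would identify the semigroup and its kernel. The twisted Lie derivative generates the pullback semigroup $e^{-t\L_X}=\phi_{-t}^*$, where the action on the coefficient bundle $E$ is by parallel transport along flow lines (well-defined and flow-equivariant since $\nabla$ is flat). The Schwartz kernel $K(t,x,y)$ is therefore supported on the graph $\Gamma=\{(t,x,y):y=\phi_{-t}(x)\}\subset \R_+\times M\times M$, in local coordinates a smooth bundle-valued multiple of the delta function on $\Gamma$. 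The wavefront inclusion \eqref{wavefront_anosov}, together with the Anosov dichotomy argument already spelled out before the lemma, gives the disjointness \eqref{wavefront_condition} with $N^*\iota$, so the flat trace is well-defined as a distribution on $\R_+$.

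Second, I would reduce the restriction to $\Omega_0^k$ to a pullback computation. The splitting $\Omega^k(M,E)=\Omega_0^k\oplus \alpha\wedge \Omega_0^{k-1}$ of Lemma \ref{lem_contactinvariantsplit} yields the explicit projection $\Pi^{(k)}=\mathrm{id}-\alpha\wedge\iota_X$, a zeroth order differential operator that, being a Reeb-invariant projector, commutes with $\phi_{-t}^*$. Consequently
\[
\tr\bigl(e^{-t\L_X}\big|_{\Omega_0^k(M,E)}\bigr)=\tr(\Pi^{(k)}\phi_{-t}^*)=\int_M \mathrm{Tr}\bigl(\iota^*(\Pi^{(k)}K)\bigr),
\]
where $\mathrm{Tr}$ denotes the fiberwise trace in $\wedge^kT^*M\otimes E$.

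Third, I would localize the pullback near closed orbits. A triple $(t,x,x)$ belongs to $\Gamma\cap\mathrm{diag}$ exactly when $\phi_t(x)=x$, i.e.\ precisely when $x$ lies on a closed orbit $\gamma\in\mathcal{P}$ and $t$ equals the period $T_\gamma$ (a positive integer multiple of the primitive period $T_\gamma^\#$). The Anosov condition forces this intersection to be clean: transverse to the flow direction the differential of $\phi_{T_\gamma}-\mathrm{id}$ is $P_\gamma-I$, which is invertible because $P_\gamma=d\phi_{T_\gamma}|_{E_s\oplus E_u}$ has no unit eigenvalue (by the expanding/contracting estimates of Definition \ref{def_Anosov}). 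Pulling the kernel back to the diagonal by the standard clean-intersection formula extracts the transverse Jacobian $|\det(I-P_\gamma)|^{-1}$ and an integration $\int_0^{T_\gamma^\#}\,ds$ along the orbit direction, producing a factor $T_\gamma^\#\,\delta(t-T_\gamma)$. The fiberwise trace on $\wedge^k T^*_0M$ of the induced action contributes $\mathrm{tr}(\wedge^k P_\gamma)$---here the role of the projector $\Pi^{(k)}$ is exactly to discard the $\alpha$-direction which would otherwise enter the trace---while parallel transport of $\nabla$ around $\gamma$ supplies the holonomy $\rho([\gamma])$, whose fiber trace gives $\mathrm{tr}(\rho([\gamma]))$. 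Summing the contributions of all $\gamma\in\mathcal{P}$ yields \eqref{guillemin}.

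The main obstacle in this plan is the third step: making the clean-intersection pullback rigorous at the level of distributions, in particular ensuring that the symbol-level computation produces exactly the Jacobian $|\det(I-P_\gamma)|^{-1}$ and that the $\wedge^k$ action decouples correctly from the flow-direction factor. This is delicate but by now standard microlocal analysis, and a complete argument along these lines is carried out in \cite[Appendix B]{zworski}, to which I would ultimately refer rather than reproduce the full symbolic computation here; the original derivation via the symbol calculus of the flow semigroup is due to Guillemin \cite{guillemin_trace_formula}.
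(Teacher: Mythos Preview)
Your proposal is correct and matches what the paper does: the paper does not give its own proof of this lemma but simply states it and refers to \cite[Theorem 8]{guillemin_trace_formula} and \cite[Appendix B]{zworski}, exactly the two references you invoke at the end of your sketch. The outline you give---identifying $e^{-t\L_X}$ with the pullback $\phi_{-t}^*$, using the zeroth-order projector $\Pi^{(k)}=\mathrm{id}-\alpha\wedge\iota_X=\iota_X\circ(\alpha\wedge)$ to restrict to $\Omega_0^k$, and then performing a clean-intersection computation near each closed orbit---is precisely the standard argument carried out in those references. One small point of convention: the paper defines $P_\gamma=d\phi_{-T_\gamma}(x_0)|_{E_s\oplus E_u}$ (with the minus sign in the time), whereas you wrote $d\phi_{T_\gamma}$; this does not affect $|\det(I-P_\gamma)|$ but you should be consistent with the paper's convention when matching the numerator $\mathrm{tr}(\wedge^k P_\gamma)$.
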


The sum above is over all closed orbits $\gamma(t) = \phi_t(x_0)$ of the Anosov flow, i.e.\ $\phi_{T_\gamma}(x_0) = x_0$ with $T_\gamma$ the period of $\gamma$. Thus, $\mathcal{P}$ does not only include the primitive orbits in $\mathcal{P}^\#$, but also all iterations of these. $P_\gamma = d\phi_{-T_\gamma}(x_0)|_{E_s\oplus E_u}$ is the linearised Poincaré map of the orbit and $T^\#_\gamma$ denotes the primitive period of $\gamma$, that is the smallest $t$ such that ${\phi_t(x_0)=x_0}$. Note that the expressions involving $P_\gamma$ in \eqref{guillemin} are well-defined, i.e. independent of the choice of basepoint $x_0$, since $d\phi_{-T_\gamma}(\phi_s(x_0))$ is conjugate to $d\phi_{-T_\gamma}(x_0)$ by $d\phi_s(x_0)$.
Furthermore, thanks to the Anosov property, $I-P_\gamma$ is invertible, so the determinant in the denominator is non-zero. Indeed, a non-zero $v\in E_s\oplus E_u$ with ${v=d\phi_{-T_\gamma}(x_0)v}$ also satisfies $v=d\phi_{NT_\gamma}(x_0)v$ for all $N\in\mathbb{Z}$, in contradiction to Definition \ref{def_Anosov}.

Lemma \ref{guillemin_thm} together with the bounds on the number of closed orbits, \eqref{min_period} and \eqref{period_growth}, imply that the limit
\begin{equation}
\label{aux_func_ruelle}
\begin{split}
    F^{(k)}(\lambda,s) &= \frac{1}{\Gamma(s)}\lim_{N\to\infty}\pair{\tr\left(e^{-t{\L_X}}\big|_{\Omega_0^k(M,E)}\right)}{t^{s-1}e^{-\lambda t}\,\chi_N(t)} \\
    &= \frac{1}{\Gamma(s)}\sum_{\gamma\in\mathcal{P}} \frac{T^\#_\gamma\, \mathrm{tr}\big(\wedge^kP_\gamma\big)\,\mathrm{tr}\big(\rho([\gamma])\big)T_\gamma^{s-1}e^{-\lambda T_\gamma}}{|\mathrm{det}(I-P_\gamma)|}
    \end{split}
\end{equation}
converges to an analytic function for all $s\in\mathbb{C}$ and $\Re(\lambda)>C$ for some constant $C$ large enough. Indeed, $\rho$ is a unitary representation, so $\mathrm{tr}(\rho([\gamma]))$ is bounded by a constant independent of $\gamma$. Furthermore, $|\mathrm{det}(1-P_\gamma)|$ is bounded away from zero for all $\gamma\in\P$. This can be seen by writing any $v\in E_s(x_0)\oplus E_u(x_0)$ as $v=v_s+v_u$ with $v_s\in E_s(x_0)$ and $v_u\in E_u(x_0)$ and noting that for all $T_\gamma$ large enough, we have
\begin{equation*}
\begin{split}
    \|(1-P_\gamma)v\| = \|v-d\phi_{-T_\gamma}(x_0)v\| &\geq c\bigl(\|v_u-d\phi_{-T_\gamma}(x_0)v_u\| + \|v_s-d\phi_{-T_\gamma}(x_0)v_s\|\bigr) \\
    &\geq c\bigl(\|v_u\| - \|d\phi_{-T_\gamma}(x_0)v_u\| + \|d\phi_{-T_\gamma}(x_0)v_s\| - \|v_s\|\bigr) \\
    &\geq c\bigl(\tfrac{1}{2}\|v_u\| + \|v_s\|\bigr) \geq \tfrac{c}{2}\|v\|,
\end{split}
\end{equation*}
where we used the Anosov property to estimate $\|d\phi_{-T_\gamma}(x_0)v_u\| \leq \frac{1}{2}\|v_u\|$ and $\|v_s\| \leq \frac{1}{2}\|d\phi_{-T_\gamma}(x_0)v_s\|$ for $T_\gamma$ large enough. This shows that the norm of $\|(1-P_\gamma)^{-1}\|$ is bounded uniformly in $\gamma$ and thus the determinant in \eqref{aux_func_ruelle} is bounded uniformly from below. Finally, by compactness we have $\sup\{\|d\phi_{-t}(x)\| \,|\, x\in M,t\in[0,1]\}<\infty$, so using the semigroup property of $d\phi_{-t}$, we obtain an exponential bound $\|d\phi_{-t}(x)\|\leq C_0e^{\beta t}$, leading to $|\mathrm{tr}(\wedge^k P_\gamma)| \leq Ce^{\beta T_\gamma}$ for some $\beta\in\R$. Thus, using the growth bound on the number of resonances in \eqref{period_growth} and choosing $\Re(\lambda)$ large enough, the sum in \eqref{aux_func_ruelle} converges locally uniformly in $s$ and $\lambda$. Note that choosing $\Re(s)> 0$ is not necessary for regularizing the limit in \eqref{aux_func_ruelle}, since $\tr(e^{-t\L_X}|_{\Omega_0^k(M,E)})$ vanishes for all $t\leq T_0$, i.e. smaller than the minimal period, see \eqref{min_period}, so there is no issue of convergence as $t\to 0$.

Since $F^{(k)}(\lambda,s)$ is analytic in a neighborhood of $s=0$, we can compute the derivative in \eqref{det_from_F} and take the alternating sum over the form degree to obtain the logarithm of the flat superdeterminant of $\L_X+\lambda$ restricted to $\Omega^\bullet_0(M,E)$:
\begin{align}
    \log\sdet\bigl((\L_X+\lambda)|_{\im(\iota_X)}\bigr) &= -\sum_{k=0}^n(-1)^k\frac{d}{ds}\Bigr|_{s=0}F^{(k)}(\lambda,s) = -\int_{T_0}^\infty t^{-1}\str\left(e^{-t(\L_X+\lambda)}\big|_{\im(\iota_X)}\right)\,dt \notag\\\label{logdet_guillemin}
    &= -\sum_{\gamma\in\mathcal{P}} \frac{T^\#_\gamma}{T_\gamma} \sum_{k=0}^n(-1)^k \frac{\mathrm{tr}\big(\wedge^kP_\gamma\big)\,\mathrm{tr}\big(\rho([\gamma])\big)}{|\mathrm{det}(I-P_\gamma)|}\,e^{-\lambda T_\gamma},
\end{align}
where we abused notation and wrote the distributional pairing in \eqref{aux_func_ruelle} as an integral.

When the stable bundle $E_s$ is orientable, we have (see \cite{zworski})
\[
|\mathrm{det}(I-P_\gamma)| = (-1)^{\dim(E_s)}\mathrm{det}(I-P_\gamma).
\]
In the contact case, we futher have $\dim(E_s)=\dim(E_u)=m$, where we write $n=2m+1$ for the dimension of $M$, see for instance \cite{faure_tsujii}. Thus, using the formula $\mathrm{det}(I-A) = \sum_k (-1)^k\mathrm{tr}(\wedge^kA)$, we can simplify the sum over $k$ in \eqref{logdet_guillemin} and relate this expression to the Ruelle zeta function in \eqref{ruelle_zeta_function}:
\begin{equation*}
\begin{split}
    \log\sdet\big((\L_X+&\lambda)|_{\im(\iota_X)}\big) = -(-1)^m\sum_{\gamma\in\mathcal{P}} \frac{T^\#_\gamma}{T_\gamma} \mathrm{tr}(\rho([\gamma]))\,e^{-\lambda T_\gamma} = -(-1)^m\sum_{\gamma\in\mathcal{P}^\#} \sum_{j=1}^\infty \frac{1}{j} \mathrm{tr}(\rho([\gamma])^j)\,e^{-j\lambda T_\gamma^\#} \\
    &= (-1)^m\sum_{\gamma\in\mathcal{P}^\#} \mathrm{tr}\bigl(-\sum_{j=1}^\infty \frac{1}{j} \rho([\gamma])^j \,e^{-j\lambda T_\gamma^\#}\bigr) = (-1)^m\sum_{\gamma\in\mathcal{P}^\#} \mathrm{tr}\bigl(\log(1-\rho([\gamma])\,e^{-j\lambda T_\gamma^\#})\bigr) \\ 
    &= (-1)^m \sum_{\gamma\in\mathcal{P}^\#} \log\mathrm{det}\bigl(1-\rho([\gamma])\,e^{-j\lambda T_\gamma^\#}\bigr) = (-1)^m \log\bigl(\zeta_{X,\rho}(\lambda)\bigr).
    \end{split}
\end{equation*}
Here, we split the sum over the closed orbits as a sum over the primitive closed orbits and all iterations of these, and then recognised the Taylor series for the logarithm. Thus, we see that for $\Re(\lambda)$ large enough
\begin{equation}
\label{flat_det_ruelle}
    \sdet\big((\L_X+\lambda)|_{\im(\iota_X)}\big) = \zeta_{X,\rho}(\lambda)^{(-1)^m}.
\end{equation}

The analytic continuation of the flat determinant in \eqref{flat_det_ruelle} to a meromorphic function on $\mathbb{C}$ was shown by Zworski and Dyatlov in \cite{zworski}. We provide a brief summary of this result. If we take the $\lambda$-derivative of \eqref{logdet_guillemin} in the domain of convergence and use the arguments of \cite[Section 4]{zworski} to exchange the order of the integral and the flat trace, we find at each form degree $k$:
\begin{equation}
\label{trace_resolvent}
\begin{split}
    \frac{d}{d\lambda}\log\det\big((\L_X+\lambda)|_{\Omega_0^k(M,E)}\big) &= \int_{T_0}^\infty \tr\left(e^{-t(\L_X+\lambda)}\big|_{\Omega_0^k(M,E)}\right)\,dt = \tr\left(\int_{T_0}^\infty e^{-t(\L_X+\lambda)}\,dt\big|_{\Omega_0^k(M,E)}\right) \\
    &= \tr\left(e^{-T_0(\L_X+\lambda)}\int_{0}^\infty e^{-t(\L_X+\lambda)}\,dt\big|_{\Omega_0^k(M,E)}\right) \\
    &= e^{-T_0\lambda}\, \tr\left(\phi_{-T_0}^*R_X(\lambda)\big|_{\Omega_0^k(M,E)}\right).
\end{split}
\end{equation}
Here, we used the formula $R_X(\lambda)= (\L_X+\lambda)^{-1} = \int_0^\infty e^{-t(\L_X+\lambda)}\,dt$ for the resolvent, which holds for $\Re(\lambda)$ large enough. By \cite[Proposition 3.3]{zworski} the flat trace of the pullback of the resolvent is well-defined for small enough $T_0$. 

Using the meromorphicity of the resolvent noted in \eqref{resolvent_continuation}, the flat trace above extends to a meromorphic function on $\mathbb{C}$ with poles corresponding to the Pollicott-Ruelle resonances. In fact, the expression in \eqref{trace_resolvent} has only simple poles with positive integer residues corresponding to the dimension of the space of resonant states, see \cite[Lemma 4.2]{zworski}. From this property it follows that $\det\big((\L_X+\lambda)|_{\Omega_0^k(M,E)}\big)$, the exponential of the antiderivative of \eqref{trace_resolvent}, defines an analytic function on $\mathbb{C}$ with zeros located at the Pollicott-Ruelle resonances, whose order of vanishing is the dimension of the resonant states. Note that the zeros in odd form degree become poles of the flat superdeterminant. We find in particular that if $X$ has no Pollicott-Ruelle resonance at $0$, then $\sdet\big((\L_X+\lambda)|_{\im(\iota_X)}\big)$ has a well-defined non-zero value at $\lambda=0$.

Summarizing the discussion in this section, we have:
\begin{prop}
\label{prop_ruelle_analytic}
    Let $X$ be a regular Anosov vector field, see Definition \ref{def_regular_anosov}, which is also the Reeb vector field of a contact form. Then $\delta=\iota_X$ is a regular general codifferential in the sense of Definition \ref{def_codifferential_types}. Moreover, the flat superdeterminant
    $$\sdet(\L_X|_{\im(\iota_X)}) = \zeta_{X,\rho}(0)^{(-1)^m}$$
    computes the (nonzero) value of the Ruelle zeta function $\zeta_{X,\rho}(\lambda)$ at $\lambda=0$.
\end{prop}

\subsection{Proof of local constancy of the value at zero of the Ruelle zeta function}

In this subsection, we show that the assumptions of Theorem \ref{thm_invariance} are satisfied for the family of general codifferentials $\delta_\tau=\iota_{X_\tau}$, where $X_\tau$ is a smooth family of regular contact Anosov vector fields in the sense of Definition \ref{def_regular_anosov}. This allows us to infer the local constancy of the regularized superdeterminant $\sdet(\L_{X_\tau}\vert_{\im(X_\tau)})$. Since the value at zero of the Ruelle zeta function is related to this superdeterminant by Proposition \ref{prop_ruelle_analytic}, we recover the results of \cite{viet_dang_fried_conjecture}, as a corollary of Theorem \ref{thm_invariance}, for a family of contact Anosov vector fields. Namely:

\begin{prop}[{\cite[Theorem 2]{viet_dang_fried_conjecture}}]
\label{invariance_ruelle}
    Let $\tau \in (-1,1) \to X_\tau$ be a smooth\footnote{smooth with respect to the Fréchet topology on $\C(M,TM)$} family of regular contact Anosov vector fields (Definition \ref{def_regular_anosov}). Then the Ruelle zeta functions associated to the $X_\tau$ satisfy
    $$\zeta_{X_\tau,\rho}(0) = \zeta_{X_0,\rho}(0), \quad \forall\,\tau\in(-1,1).$$
\end{prop}

\begin{rmk}
    Note that the result of \cite{viet_dang_fried_conjecture} holds more generally for Anosov vector fields that are not required to be contact. The contact structure ensures the existence of a smooth one-form $\alpha$ satisfying $\ker(\alpha) = E_s\oplus E_u$ and $\alpha(X)=1$. For a general Anosov vector field, one can of course define such a one-form, but it need only be H\"older continuous, see for instance \cite[Section 2.1.3]{faure_sjostrand}. Thus, the general case does not fit into our framework of general codifferentials, since the splitting in \eqref{contact_splitting} need not be smooth. With some extra work, it may be possible to extend our general local constancy result to the case of non-smooth isotropic splittings of the space of differential forms.
\end{rmk}

\begin{rmk}
Note that our approach to this local constancy result differs from the approach in \cite{viet_dang_fried_conjecture} in that we phrase everything in terms of flat traces and flat determinants. On the other hand, Dang et.\ al.\ start by taking the $\tau$-derivative of the expression for the Ruelle zeta function in terms of closed orbits, as in \eqref{ruelle_zeta_function}, and only reformulate the result in terms of flat traces to show the analytic continuation of this derivative. Our consistent use of the flat determinant allows us to place this result inside the larger framework of Section \ref{subsection_invariance}.
\end{rmk}

In the previous subsections, we saw that $\tau\to\iota_{X_\tau}$ satisfies the structural assumptions of Theorem \ref{thm_invariance}, i.e. defines an inner variation of general codifferentials, which are regular for every $\tau$ (Proposition \ref{prop_ruelle_analytic}). It remains to address the analytic requirements. Note that the smoothness of $\tau\to X_\tau$ w.r.t.\ the topology on $\C(M,TM)$ implies the smoothness of $(\tau,t,x)\to\phi^\tau_{-t}(x)$ for the flow generated by $X_\tau$. Thus, it can be seen that the semigroup generated by $\L_{X_\tau}$, namely the pullback map $e^{-t\L_{X_\tau}} = (\phi^\tau_{-t})^*$, is smooth with respect to the Fréchet topology on $\Omega^\bullet(M,E)$.

Denote the Schwartz kernel of $(\phi_{-t}^\tau)^*$ acting on $E$-valued $k$-forms by $K^{(k)}_\tau$. We begin by proving the differentiability of $\tau\to K^{(k)}_\tau \in \D_\Gamma(\R_+\times M\times M)$ with respect to the H\"ormander topology for some conic set $\Gamma$, showing the validity of Assumption \ref{assumption_diff_hormander}.

\begin{lemma}
\label{WF_uniform}
Let $\tau \in (-1,1) \to X_\tau$ be a smooth family of Anosov vector field. Then, for any compact interval $I\subset (-1,1)$, there exists a closed conic set $\Gamma \subset T^*(\R_+\times M\times M)$ with $\Gamma\cap N^*\iota = \emptyset$ and $\WF(K_\tau^{(k)})\subset\Gamma$ for all $\tau \in I$, such that 
$$\tau \to K^{(k)}_\tau \in \D_\Gamma(\R_+\times M\times M; (\wedge^kT^*M\otimes E)\boxtimes(\wedge^kT^*M\otimes E)^*\otimes \wedge^nT^*M)$$
is differentiable with respect to the H\"ormander topology for each $k$.
\end{lemma}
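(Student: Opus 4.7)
The plan is to exploit the explicit form of the Schwartz kernel $K_\tau^{(k)}$ as a delta distribution supported on the graph of the time-$(-t)$ flow map, and to derive uniform wavefront and H\"ormander-seminorm bounds from the smoothness of $\tau\mapsto X_\tau$ together with the Anosov property, which is uniform on compact $\tau$-intervals by structural stability. Concretely, in local coordinates $K_\tau^{(k)}(t,x,y)$ is a delta distribution on the smooth submanifold $\mathcal{G}_\tau = \{(t,x,y)\in\R_+\times M^2 : y=\phi^\tau_{-t}(x)\}$, multiplied by a smooth factor $A_\tau^{(k)}(t,x)$ encoding parallel transport along the orbit and the pullback action of $d\phi^\tau_{-t}$ on $k$-forms. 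Equivalently, $K_\tau^{(k)} = (\Phi_\tau)_* A_\tau^{(k)}$, where $\Phi_\tau\colon (t,x)\mapsto (t,x,\phi^\tau_{-t}(x))$ is a smooth family of embeddings of $\R_+\times M$ into $\R_+\times M\times M$ depending smoothly on $\tau$ in the Fr\'echet $C^\infty$ topology.

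\textbf{Construction of $\Gamma$.} I would define $\Gamma \doteq \overline{\bigcup_{\tau\in I}\Lambda_\tau}$, where $\Lambda_\tau$ is the conic set appearing in \eqref{wavefront_anosov}. By construction $\WF(K_\tau^{(k)})\subset \Lambda_\tau \subset \Gamma$ for all $\tau\in I$. To verify $\Gamma\cap N^*\iota=\emptyset$, suppose by contradiction that some $(t_0,0,x_0,\xi_0,x_0,-\xi_0)$ with $\xi_0\neq 0$ lies in $\Gamma$. By compactness of $I$ and $C^\infty$-continuity of $\tau\mapsto\phi^\tau$, one can extract convergent sequences $\tau_n\to \tau_*\in I$, $t_n\to t_0$, and $(x_n,\xi_n)\to (x_0,\xi_0)$ realizing the limit; the limit triple $(t_0,x_0,\xi_0)$ then satisfies $\xi_0\cdot X_{\tau_*}(x_0)=0$, $\phi^{\tau_*}_{-t_0}(x_0)=x_0$, and $\xi_0 = d\phi^{\tau_*}_{t_0}(x_0)^\top \xi_0$. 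This is exactly the configuration ruled out by the Anosov property for $X_{\tau_*}$, by the argument given just below \eqref{wavefront_condition}.

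\textbf{Differentiability in $\D_\Gamma$.} Using the pushforward description, the $\tau$-derivative of $K_\tau^{(k)}$ can be assembled from smooth operations (pullback by $\Phi_\tau$, multiplication, pushforward) applied to the smooth families $\tau\mapsto(\Phi_\tau, A_\tau^{(k)}, \partial_\tau\Phi_\tau, \partial_\tau A_\tau^{(k)})$. The wavefront set mapping properties of these fundamental operations (see \cite{brouder}) keep the result inside $\D_\Gamma$ and give local uniform bounds in the H\"ormander seminorms. Writing $h^{-1}(K_{\tau+h}^{(k)}-K_\tau^{(k)}) = h^{-1}\int_\tau^{\tau+h}\partial_\sigma K_\sigma^{(k)}\,d\sigma$ (first componentwise on test pairings, then promoted to $\D_\Gamma$) and using the boundedness of $\partial_\sigma K_\sigma^{(k)}$ in the relevant seminorms for $\sigma$ in a small interval around $\tau$, one concludes that the difference quotient converges in the H\"ormander topology as $h\to 0$.

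The main obstacle is this last step: rigorously verifying H\"ormander-seminorm convergence of the difference quotients. This requires expressing $K_\tau^{(k)}$ in local coordinates as an oscillatory integral with a phase depending smoothly on $\tau$, and checking that the $\tau$-derivatives of that phase do not destroy the expected decay in conic directions avoiding $\Gamma$. The uniform transversality needed for those estimates ultimately reduces to the same uniform Anosov constants used to construct $\Gamma$ in the first place, together with the smoothness of the family $(X_\tau,\phi^\tau_{-t})$.
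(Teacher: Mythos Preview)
Your proposal follows essentially the same route as the paper: construct $\Gamma$ by a compactness argument on the union of the conormal sets $\Lambda_\tau$ (the paper phrases this dually, building an open conic neighborhood of $N^*\iota$ via the compact image of a continuous map on $I\times[t_0-\epsilon,t_0+\epsilon]\times S^*M$ and taking its complement), then write the difference quotient as $h^{-1}\int_\tau^{\tau+h}\partial_\sigma K_\sigma^{(k)}\,d\sigma$ and bound the H\"ormander seminorms uniformly in $\sigma$. The step you flag as ``the main obstacle'' is exactly where the paper invests its effort: rather than arguing abstractly via continuity of pushforward/pullback, it computes the Fourier transform of $\chi\cdot h^{-1}(K_{\tau+h}-K_\tau)$ explicitly in local coordinates, obtains two oscillatory integrals with phase $f^\sigma(t,x)=\hat\omega t+\hat\xi\cdot x+\hat\eta\cdot\phi^\sigma_{-t}(x)$, shows $|\nabla f^\sigma|>\epsilon$ uniformly on the relevant support (this is where the Anosov property enters, exactly as you anticipate), and then applies non-stationary phase \cite[Theorem~7.7.1]{hormander1} with constants uniform in $\sigma$.
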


\begin{proof}
We first show that any point in $N^*\iota$ has an open conic neighborhood disjoint from the wavefront sets of $K_\tau^{(k)}$ for all $\tau\in I$. To make use of compactness, we identify conic sets with subsets of the cosphere bundle $S^*(\R_+ \times M \times M) = (T^*(\R_+ \times M \times M) \setminus\{0\}) / \R_+$, where we removed the zero section and quotiented out the action of $\R_+$ by dilation in the fibers. We define the cosphere bundle $S^*M$ similarly. Denote by $\pi_{\R_+}$ the projection $\pi_{\R_+}: S^*(\R_+ \times M \times M) \to \R_+.$

Given any point $p_0 = (t_0,0,x_0,\xi_0,x_0,-\xi_0) \in N^*\iota$
and some small $\epsilon>0$, consider the map, 
$$F: I\times [t_0-\epsilon,t_0+\epsilon]\times S^*M \to S^*(\R_+ \times M \times M),$$
defined by
$$F(\tau,t,x,\xi) = (t, -\xi\cdot X_\tau(x), x,\xi, \phi_{-t}^\tau(x), -d\phi_{t}^\tau(\phi_{-t}^\tau(x))^\top\cdot\xi).$$
By \eqref{wavefront_anosov}, the image of $F$ contains the wavefront set of $K_\tau^{(k)}$ over $[t_0-\epsilon,t_0+\epsilon]$ for each $\tau$, i.e.
$$\bigcup_{\tau\in I}\WF(K_\tau^{(k)})\cap\pi_{\R_+}^{-1}([t_0-\epsilon,t_0+\epsilon]) \subset \im(F).$$
Furthermore, $F$ is disjoint from $N^*\iota$ as we saw in the discussion following \eqref{wavefront_condition}. $F$ is continuous by the continuity of $X_\tau(x)$ and $\phi_{t}^\tau(x)$ with respect to $(\tau,t,x)$, so the image of $F$ is a compact subset of $S^*(\R_+\times M\times M)$, which does not intersect $N^*\iota$. Thus, we can find a neighborhood $U \subset S^*(\R_+ \times M \times M)$ of $p_0$ disjoint from $\im(F)$ and contained in $\pi_{\R_+}^{-1}((t_0-\epsilon,t_0+\epsilon))$.
It follows that the conic set generated by $U$ is a conic neighborhood of $p_0$ disjoint from $\bigcup_{\tau\in I}\WF(K_\tau^{(k)})$. Since $p_0$ was arbitrary, this shows that there is an open conic set,
$$\Lambda \subset T^*(\R_+ \times M \times M), \qquad\mathrm{with}\quad N^*\iota \subset \Lambda,\quad\mathrm{and}\quad \Lambda \cap \bigcup_{\tau\in I}\WF(K_\tau^{(k)})= \emptyset.$$
Taking the complement $\Gamma = \Lambda^c$ provides a closed cone with $K_\tau^{(k)} \in \D_\Gamma(\R_+\times M\times M)$ for all $\tau\in I$.

Note that $\Lambda = \Gamma^c$ can be chosen arbitrarily small around $N^*\iota$. As the projection of $N^*\iota$ onto $M\times M$ is the diagonal, we can choose $\Lambda$ small enough, so that $\pi_{M\times M}(\Lambda) \subset \bigcup_i(U_i\times U_i)$, where the $U_i$ are coordinate neighborhoods.

We must now show that for $\tau$ in the interior of $I$ the difference quotients
$$\tfrac{1}{h}(K^{(k)}_{\tau+h} - K^{(k)}_\tau) \in \D_\Gamma(\R_+\times M\times M)$$
converge in the H\"ormander topology as $h\to 0$. Since the pullback maps $(\phi_{-t}^\tau)^*$ are smooth on $\Omega^k(M,E)$ with respect to $\tau$, we certainly have weak convergence in $\D(\R_+\times M\times M)$. It remains to show that the difference quotients are bounded with respect to the H\"ormander seminorms $||\cdot||_{N, \chi, V}$ as $h\to 0$, see \cite[Definition 8.2.2]{hormander1}.
Here, the seminorms are given in local coordinates by
$$\|u\|_{N,\chi,V} \doteq \sup_{\xi\in V} |\xi|^N |\mathcal{F}(\chi u)(\xi)|,$$
where $\mathcal{F}$ denotes the Fourier transform. They range over all $N\in\N$ and all $\chi\in\C_c(\R_+\times M\times M)$, $V\subset\R^{2n+1}$ closed conic such that $\supp(\chi)\times V \subset \Lambda = \Gamma^c$. In fact, it is enough to check boundedness for $\chi$ and $V$ with $\supp(\chi)\times V$ forming a cover of $\Lambda$, see \cite[p. 80]{grigis_sjostrand}.

By our choice of $\Lambda$, we must only consider $\chi$ with $\supp(\chi) \subset \R_+ \times U \times U$ for some coordinate neighborhood $U \subset M$. Let $\{\nu_1,\dots,\nu_r\}$ be a local frame for the bundle $\wedge^kT^*M\otimes E$ over $U$. The components of the Schwartz kernel $K^{(k)}_\tau$ with respect to this frame can be written as
\begin{equation}
\label{kernel_components}
    K^{(k)}_\tau(t,x,y)_{ij} = b_{ij}^\tau(t,x) K_\tau(t,x,y),
\end{equation}
where the $b_{ij}^\tau$ are smooth functions and $K_\tau(t,x,y)$ is the Schwartz kernel of $(\phi_{-t}^\tau)^*$ acting on $\C(M)$, i.e. a delta function on $\{y=\phi_{-t}(x)\}$.

We first consider the H\"ormander seminorms for $K_\tau(t,x,y) \in \D_\Gamma(\R_+ \times M\times M)$, since the case of the full Schwartz kernel $K^{(k)}_\tau(t,x,y)$ will follow easily from this. Taking the Fourier transform in local coordinates of $\chi$ times the difference quotient gives
\begin{equation}
\label{hormander_differentiability}
\begin{split}
    &\mathcal{F}\big(\chi\frac{1}{h}(K_{\tau+h}-K_\tau)\big)(\omega,\xi,\eta) = \frac{1}{h}\pair{K_{\tau+h}(t,x,y)-K_\tau(t,x,y)}{\chi(t,x,y) e^{-i(\omega t + \xi x + \eta y)}} \\
    &= \frac{1}{h}\int_0^\infty\int_{\R^n} \big( \chi(t,x,\phi_{-t}^{\tau+h}(x))e^{-i(\omega t + \xi x + \eta \phi_{-t}^{\tau+h}(x))} - \chi(t,x,\phi_{-t}^{\tau}(x))e^{-i(\omega t + \xi x + \eta\phi_{-t}^{\tau}(x))}\big)\,dx\,dt \\
    &= \frac{1}{h}\int_\tau^{\tau+h}\int_0^\infty\int_{\R^n} \frac{d}{d\sigma}\big( \chi(t,x,\phi_{-t}^\sigma(x))e^{-i(\omega t + \xi x + \eta \phi_{-t}^\sigma(x))}\big)\,dx\,dt\,d\sigma \\
    &= \frac{1}{h}\int_\tau^{\tau+h}\int_0^\infty\int_{\R^n} (\frac{d}{d\sigma}\phi_{-t}^\sigma(x)\cdot\nabla_y\chi(t,x,y)|_{y=\phi_{-t}^\sigma(x)})e^{-i(\omega t + \xi x + \eta \phi_{-t}^\sigma(x))}\,dx\,dt\,d\sigma \\
    &- i\frac{1}{h}\int_\tau^{\tau+h}\int_0^\infty\int_{\R^n} (\eta\cdot\frac{d}{d\sigma}\phi_{-t}^\sigma(x))\chi(t,x,\phi_{-t}^\sigma(x)))e^{-i(\omega t + \xi x + \eta \phi_{-t}^\sigma(x))}\,dx\,dt\,d\sigma.
\end{split}
\end{equation}
Write $(\omega,\xi,\eta) \in \R^{2n+1}$ as $\lambda (\hat{\omega},\hat{\xi},\hat{\eta})$, where $\lambda = (|\omega|^2+|\xi|^2+|\eta|^2)^{\frac{1}{2}}$ and $(\hat{\omega},\hat{\xi},\hat{\eta}) \in S^{2n}$ is the corresponding unit vector. Then we can write the two integrals over $(t,x)$ in the last line of equation \eqref{hormander_differentiability}, as
\begin{equation*}
    \int_{\R^{n+1}} u_j^\sigma(t,x) e^{-i\lambda f^\sigma(t,x)}\,dx\,dt, \quad j \in \{1,2\}
\end{equation*}
for
\begin{equation*}
    f^\sigma(t,x) = \hat{\omega}t + \hat{\xi}\cdot x + \hat{\eta}\cdot \phi_{-t}^\sigma(x)
\end{equation*}
and
\begin{equation}
\label{u_formula}
    u_1^\sigma(t,x) = \frac{d}{d\sigma}\phi_{-t}^\sigma(x)\cdot\nabla_y\chi(t,x,y)|_{y=\phi_{-t}^\sigma(x)}, \quad
    u_2^\sigma(t,x) = \frac{\eta}{|\eta|}\cdot\frac{d}{d\sigma}\phi_{-t}^\sigma(x) \chi(t,x,\phi_{-t}^\sigma(x)),
\end{equation}
where we factored out the norm of $\eta$ in the last term of \eqref{hormander_differentiability}.

The gradient of the phase factor satisfies
\begin{equation}
\label{grad_f_formula}
    \nabla_{(t,x)} f^\sigma(t,x) = (\hat{\omega}-\hat{\eta}\cdot X_\sigma(\phi_{-t}^\sigma(x)),\,\, \hat{\xi} + d\phi_{-t}^\sigma(x)^\top \cdot \hat{\eta}).
\end{equation}
Inspecting the definition of the $u_j^\sigma$ in \eqref{u_formula}, we see that 
$$\supp(u_j^\sigma) \subset \{(t,x)\,|\, (t,x,\phi_{-t}^\sigma(x)) \in \supp(\chi)\}.$$
Recall that $\chi\in \C_c(\R_+\times M\times M)$ has support close to the diagonal in $M\times M$. At the cost of shrinking $\Lambda$ and choosing $h$ small enough, we can assume without loss of generality that there is a point $(t_0,x_0) \in \supp(u_j^\tau)$ satisfying $\phi_{-t_0}^\tau(x_0)=x_0$. Otherwise, we can choose $\supp(\chi)$ small enough so that it does not contain a point of the form $(t,x,\phi_{-t}^\sigma(x))$ for any $\sigma\in[\tau,\tau+h]$ and hence $u_j^\sigma=0$ for each $\sigma$.

Thus, the $u_j^\sigma$ are supported around a periodic point $(t_0,x_0)$ of the flow of $X_\tau$. We must check the H\"ormander seminorms for a closed cone $V \subset \R^{2n+1}$, such that when $\Lambda = \Gamma^c$ is viewed in local coordinates, we have $\supp(\chi)\times V \subset \Lambda$. Since $\Lambda$ is an arbitrarily small conic neighborhood of $N^*\iota$, the unit vectors $(\hat{\omega},\hat{\xi},\hat{\eta}) \in V$ are close to $(0,\hat{\xi}_0,-\hat{\xi}_0)$ for some $\xi_0\in\R^n$. As we saw in the arguments following \eqref{wavefront_condition}, for $(t_0, x_0)$ as above, we have
$$(-\hat{\xi}_0\cdot X_\tau(x_0), (1-d\phi_{t_0}^\tau(x_0)^\top)\cdot \hat{\xi}_0) \neq 0.$$
Inspecting equation \eqref{grad_f_formula}, we see that this is nothing but $\nabla f^\tau(t_0,x_0)$ for $(\hat{\omega},\hat{\xi},\hat{\eta}) = (0,\hat{\xi}_0,-\hat{\xi}_0)$. Since $f^\sigma$ depends continuously on $\tau$, $(t,x)$ and $(\hat{\omega},\hat{\xi},\hat{\eta})$, we can choose $h$, $\supp(\chi)$ and $V$ small enough so that for some $\epsilon>0$ and every $\sigma \in [\tau,\tau+h]$,
$$|\nabla f^\sigma(t,x)| > \epsilon$$
for all $(t,x)\in\supp(u_j^\sigma)$ and all unit vectors $(\hat{\omega},\hat{\xi},\hat{\eta}) \in V$.

We are now in a position to apply non-stationary phase estimates, see \cite[Theorem 7.7.1]{hormander1}.
For every $N\in\N$, we find for some $C_N>0$:
\begin{equation}
\label{non_stationary_phase}
    |\lambda|^N\,\Big|\int_{\R^{n+1}} u_j^\sigma(t,x) e^{-i\lambda f^\sigma(t,x)}\,dx\,dt\Big| \le C_N \sum_{|\alpha|\le N}\frac{\sup_{(t,x)}|\partial^\alpha u_j^\sigma(t,x)|}{\inf_{(t,x)\in\supp(u_j)}|\nabla f^\sigma(t,x)|^{2N-|\alpha|}}.
\end{equation}
As we have shown, the denominator satisfies
$$\inf_{(t,x)\in\supp(u_j)}|\nabla f^\sigma(t,x)|^{2N-|\alpha|} \ge \epsilon^{2N-|\alpha|}$$
uniformly for $\sigma \in [\tau,\tau+h]$.
The functions $u_j^\sigma(t,x)$ involve the flow $\phi_{t}^\sigma(x)$ and its derivative $\frac{d}{d\sigma}\phi_{t}^\sigma(x)$. Since $\phi_{t}^\sigma(x)$ depends smoothly on $\sigma$ we can bound $u_j^\sigma(t,x)$ on the compact set 
$$(\sigma,t,x) \in [\tau,\tau+h]\times\pi_{\R_+\times M}(\supp(\chi)),$$
to get an estimate
$$\sup_{(t,x)}|\partial^\alpha u_j^\sigma(t,x)| < C$$
uniformly for $\sigma \in [\tau,\tau+h]$. Furthermore, the constants $C_N$ in \eqref{non_stationary_phase} are polynomials in 
$$\sup_{(t,x)\in\supp(u_j)}|\partial^\beta f^\sigma(t,x)|,$$
for $2 \le |\beta| \le N+1$, see \cite[Theorem 7.7.1]{hormander1}.
Again, smoothness of $\phi_{t}^\sigma(x)$ implies that these constants can be chosen uniformly for $\sigma\in[\tau,\tau+h]$.

Recalling that $\lambda$ is the norm of $(\omega,\xi,\eta)$, we have found that the two oscillatory integrals can be estimated for all $(\omega,\xi,\eta) \in V$ by
\begin{equation*}
    |(\omega,\xi,\eta)|^N\,\Big|\int_{\R^{n+1}} u_j^\sigma(t,x) e^{-i\lambda f^\sigma(t,x)}\,dx\,dt\Big| \le C_j^{(N)}
\end{equation*}
for constants $C_j^{(N)}$ independent of $\sigma \in [\tau,\tau+h]$. Using these estimates in equation \eqref{hormander_differentiability} and denoting the two oscillatory integrals over the $u_j$ by $I_j(\sigma)$, we find
\begin{equation*}
\begin{split}
    |(\omega,\xi,\eta)|^N &\big|\mathcal{F}\big(\chi\frac{1}{h}(K_{\tau+h}-K_\tau)\big)(\omega,\xi,\eta)\big| \le |(\omega,\xi,\eta)|^N\frac{1}{h}\Big(\Big|\int_\tau^{\tau+h} I_1(\sigma) \,d\sigma\Big| + |\eta|\Big|\int_\tau^{\tau+h} I_2(\sigma) \,d\sigma\Big|\Big) \\
    &\le \frac{1}{h}\int_\tau^{\tau+h} |(\omega,\xi,\eta)|^N |I_1(\sigma)| \,d\sigma + \frac{1}{h}\int_\tau^{\tau+h} |(\omega,\xi,\eta)|^{N+1} |I_2(\sigma)| \,d\sigma \le C_1^{(N)} + C_2^{(N+1)},
\end{split}
\end{equation*}
for all $(\omega,\xi,\eta) \in V$. Thus, the H\"ormander seminorm $\|\frac{1}{h}(K_{\tau+h}-K_\tau)\|_{N,\chi,V}$ remains bounded as $h\to 0$.

To show the same result for the full Schwartz kernel $K_\tau^{(k)}$, note that by \eqref{kernel_components} the components of $K_\tau^{(k)}$ with respect to the local frame are given by multiplying the kernel $K_\tau$ with the smooth functions $b_{ij}^\tau(t,x)$. The sections in the local frame $\{\nu_1,\dots,\nu_r\}$ are tensor products of local sections of $\wedge^k T^*M$ and $E$. Note that the pullback $(\phi_{-t}^\tau)^*$ acts in the fibers of $\wedge^k T^*M$ by $\wedge^k d\phi_{-t}^\tau(x)^\top$ and in the fibers of $E$ by the parallel transport map $\mathbb{P}^\tau$ along the path $s\in [0,t] \to \phi_{-s}^\tau(x)$. The $b_{ij}^\tau(t,x)$ are just products of the components of these two maps. Since the flow $\phi_{-t}^\tau(x)$ depends smoothly on $\tau$ and the parallel transport map depends smoothly on the path, the component functions $b_{ij}^\tau(t,x)$ are smooth in $\tau$. Now, computing the Fourier transform of the components of $K_\tau^{(k)}$ as in \eqref{hormander_differentiability} leads to the same oscillatory integrals as above, except that the functions $u_j^\sigma(t,x)$ in \eqref{u_formula} now involve factors of $b_{ij}^\sigma(t,x)$ and $\frac{d}{d\sigma}b_{ij}^\sigma(t,x)$. Since these factors depend smoothly on $\sigma$, the oscillatory integrals can still be estimated uniformly in $\sigma$.
\end{proof}

We now address the remaining assumptions of Theorem \ref{thm_invariance}

\begin{proof}[Proof of Proposition \ref{invariance_ruelle}]
    By Lemma \ref{contact_variation}, the family of general codifferentials $\delta_\tau = \iota_{X_\tau}$ forms an inner variation. By Proposition \ref{prop_ruelle_analytic} each $\delta_\tau$ is regular. We have also seen that the semigroup $e^{-t\L_{X_\tau}}=(\phi_{-t}^\tau)^*:\Omega^\bullet(M,E)\to\Omega^\bullet(M,E)$ depends smoothly on $\tau$, so Assumption \ref{assumption_diff_semigroup} is satisfied. Lemma \ref{WF_uniform} shows that Assumption \ref{assumption_diff_hormander} is satisfied.
    
    We consider Assumption \ref{assumption_smaller_wavefront}. By \eqref{wavefront_anosov}, for each $\tau$ the wavefront set of the Schwartz kernel of $e^{-t\L_{x_\tau}}$ satisfies
    \begin{equation}
    \label{wavefront_anosov2}
    \begin{split}
        \WF(K_\tau) &\subseteq \bigl\{(t,-\xi\cdot X_\tau(x), x, \xi, \phi^\tau_{-t}(x), -d\phi^\tau_{t}(\phi^\tau_{-t}(x))^\top\cdot \xi) \,\,|\,\, t\in\R_+,\, (x,\xi)\in T^*M\bigr\} \\
        &= \bigl\{(t,-\xi\cdot X_\tau(\phi^\tau_{t}(y)), \phi^\tau_{t}(y), \xi, y, -d\phi^\tau_{t}(y)^\top\cdot \xi) \,\,|\,\, t\in\R_+,\, y\in M,\, \xi\in T_{\phi^\tau_{t}(y)}^*M\bigr\},
    \end{split}
    \end{equation}
    where in the second line we rewrote the set using $y=\phi^\tau_{-t}(x)$. Proceeding by contradiction, assume there exists some
    $$(t,0,x,\xi,y,-\eta)\in\WF(K_\tau) \quad\text{with}\quad (s,0,y,\eta,x,-\xi)\in\WF(K_\tau) \quad\text{for some } s\in \R_+.$$
    Then by \eqref{wavefront_anosov2}, we must have
    \begin{equation*}
        \xi\cdot X_\tau(x) = 0, \quad x = \phi^\tau_t(y) \,\,\text{ and }\,\, y = \phi^\tau_s(x), \quad \xi = d\phi_s(x)^\top\cdot\eta \,\,\text{ and }\,\, \eta = d\phi_t(x)^\top\cdot\xi.
    \end{equation*}
    In other words
    $$\xi \in E_s^*(x)\oplus E_u^*(x), \quad x = \phi^\tau_{t+s}(x), \quad \xi = d\phi^\tau_{t+s}(x)^\top\cdot\xi.$$
    But by the arguments following \eqref{wavefront_condition} this is impossible for an Anosov flow.

    Turning to Assumption \ref{assumption_G_convergencce}, we can obtain a type of Guillemin trace formula, see Lemma \ref{guillemin_thm}, for the flat trace of the composition $\theta_\tau e^{-t\L_{X_\tau}}$ acting on $E$-valued $k$-forms. Note that the operator $\theta_\tau:\wedge^k T^*M\otimes E \to \wedge^k T^*M\otimes E$ of Lemma \ref{contact_variation} is just a bundle endomorphism depending smoothly on $\tau$. Thus, for a coordinate neighborhood $U$ around $x_0\in M$, where $x_0=\phi^\tau_{t_0}(x)$ for some $t_0$, and a local frame on $U$, the components of the Schwartz kernel of the composition are given by
    $$\bigl(\theta_\tau K_\tau^{(k)}\bigr)_{ij}(t,x,y) = \sum_{k}\theta^\tau_{ik}(y)b^\tau_{kj}(t,y)K_\tau(t,x,y),$$
    where $K_\tau$ is the Schwartz kernel of $(\phi^\tau_{-t})^*$ acting on $\C(M)$ and $\theta^\tau_{ik}$ respectively $b^\tau_{kj}$ are the components of $\theta_\tau$ and of $(\phi^\tau_{-t})^*$ in $\wedge^k T^*M\otimes E$. The latter are just smooth functions. Pulling back by $\iota:(t,x)\to(t,x,x)$, taking the trace in the fibers of the bundle and pairing with a function $\chi\in\C_c((t_0-\epsilon,t_0+\epsilon)\times U)$, we find the following local formula in the exact same way as in \cite[Appendix B]{zworski}:
    \begin{equation*}
        \pair{\mathrm{Tr}(\theta_\tau K_\tau^{(k)})}{\chi} = \frac{1}{|\mathrm{det}(1-P_\gamma)|}\int_{-\epsilon}^\epsilon \chi(t_0,\phi^\tau_s(x_0))\sum_{j,k}\theta^\tau_{jk}(\phi^\tau_s(x_0))b^\tau_{kj}(t,\phi^\tau_s(x_0))\,ds
    \end{equation*}
    Using a partition of unity leads to a global Guillemin-like trace formula of the form
    \begin{equation*}
        \tr\bigl(\theta_\tau e^{-t\L_{X_\tau}}\big|_{\Omega^k(M,E)}\bigr) = \sum_{\gamma\in\P_\tau} \frac{\mathrm{tr}(\rho([\gamma]))}{|\mathrm{det}(1-P_\gamma)|} \int_0^{T_\gamma^\#} \mathrm{tr}\bigl(\theta_\tau(\phi^\tau_s(x_0))\cdot\wedge^kd\phi^\tau_{-T_\gamma}(\phi^\tau_s(x_0))\bigr)\,ds\cdot \delta(t-T_\gamma),
    \end{equation*}
    where $x_0$ is any point on the closed orbit $\gamma$.

    Assumption \ref{assumption_G_convergencce} is now equivalent to the locally uniform convergence of the sum
    \begin{equation}
    \label{ruelle_sum}
        \sum_{\gamma\in\P_\tau} \frac{|\mathrm{tr}(\rho([\gamma]))|}{|\mathrm{det}(1-P_\gamma)|} \Bigl|\int_0^{T_\gamma^\#} \mathrm{tr}\bigl(\theta_\tau(\phi^\tau_s(x_0))\cdot\wedge^kd\phi^\tau_{-T_\gamma}(\phi^\tau_s(x_0))\bigr)\,ds\Bigr|\,T_\gamma^{\Re(s)-1}e^{-\Re(\lambda) T_\gamma}.
    \end{equation}
    Note first that for any $\tau_0\in(-1,1)$ we can choose $T_0>0$ such that $T_\gamma > T_0$ for every $\gamma\in\P_\tau$ and $\tau$ near $\tau_0$, see \cite[Remark 4]{viet_dang_fried_conjecture} together with \eqref{min_period}. Thus, there is no issue of convergence as the periods of $\gamma$ become small. Since $\rho$ is a unitary representation, $\mathrm{tr}(\rho([\gamma]))$ is bounded by some constant depending only on the rank of $E$. Moreover, by the arguments following \eqref{aux_func_ruelle}, $|\mathrm{det}(1-P_\gamma)|$ can be bounded by a constant uniformly for $\tau$ near $\tau_0$. We can estimate the integral in \eqref{ruelle_sum} by
    \begin{equation}
    \label{flow_integral_estimate}
        \Bigl|\int_0^{T_\gamma^\#} \mathrm{tr}\bigl(\theta_\tau(\phi^\tau_s(x_0))\cdot\wedge^kd\phi^\tau_{-T_\gamma}(\phi^\tau_s(x_0))\bigr)\,ds\Bigr| \leq T_\gamma^\#\sup_{x\in M}\|\theta_\tau(x)\|\cdot\sup_{x\in M}\|d\phi^\tau_{-T_\gamma(x)}\|^k
    \end{equation}
    where we denoted by $\|\cdot\|$ the operator norm of the finite-dimensional linear operators for a choice of norm on the fibers of $T^*M$. Since $\theta_\tau$ depends smoothly on $\tau$, $\|\theta_\tau(x)\|$ is bounded on the compact manifold $M$, locally uniformly in $\tau$. Furthermore, by the smoothness of $(\tau,t,x) \to d\phi^\tau_t(x)$, we have
    $$\sup\bigl\{\bigr\|d\phi^\tau_{-t}(x)\| \,\,|\,\, t\in[0,1],\,x\in M,\, \tau \in [\tau_0-\epsilon,\tau_0+\epsilon]\} = \beta <\infty$$
    for some $\beta\geq 1$.
    Writing $t\in\R_+$ as $t=m+s$ for $m\in\N$ and $s\in[0,1)$, we find
    $$\|d\phi^\tau_{-t}(x)\| = \|d\phi^\tau_{-s}\circ d\phi^\tau_{-1}\circ\cdots\circ d\phi^\tau_{-1}(x)\| \leq \beta^{m+1} \leq \beta e^{\log(\beta)t}$$
    uniformly for $\tau\in [\tau_0-\epsilon,\tau_0+\epsilon]$. Thus, we have an exponential bound on \eqref{flow_integral_estimate} that is locally uniform in $\tau$. By \cite[Remark 4]{viet_dang_fried_conjecture}, we also obtain a bound on the growth of periods as in \eqref{period_growth} uniformly for $\tau\in [\tau_0-\epsilon,\tau_0+\epsilon]$. Therefore, we see that there is some $C>0$ large enough such that the sum in \eqref{ruelle_sum} converges locally uniformly in $(\tau,\lambda,s)$ for all $\tau\in(-1,1)$, $s\in\mathbb{C}$ and $\Re(\lambda)>C$, proving Assumption \ref{assumption_G_convergencce}.

    Note that the integral, or rather distributional pairing,
    $$G(\tau,\lambda,s) = \int_0^\infty \str(\theta_\tau e^{-t\L_{X_\tau}})t^{s-1}e^{-\lambda t}\,dt,$$
    defining the function $G$ which appears in Assumption \ref{assumption_analytic_continuation}, is convergent for all $s\in\mathbb{C}$. Thus, we can directly set $s=1$ and only need to find an analytic continuation of $G(\tau,\lambda,1)$ to $\lambda=0$, see Remark \ref{rmk_eliminating_lambda_s}. Note from the proof of Theorem \ref{thm_invariance} that $\lambda G(\tau,\lambda,1)$ is precisely the $\tau$-derivative of $\log\sdet((\L_{X_\tau}+\lambda)|_{\im(\iota_{X_\tau})}\bigr)$. As in \eqref{trace_resolvent}, we now find for $\Re(\lambda)>C$:
    \begin{equation*}
        G(\tau,\lambda,1) = \int_0^\infty \str(\theta_\tau e^{-t\L_{X_\tau}})e^{-\lambda t}\,dt = e^{-\lambda t_0}\str(\theta_{\tau}e^{-t_0\L_{{X_\tau}}}R_{{X_\tau}}(\lambda)),
    \end{equation*}
    where $R_{{X_\tau}}(\lambda) = (\L_{{X_\tau}}+\lambda)^{-1}$ is the resolvent of $\L_{{X_\tau}}$. The analytic continuation in $\lambda$ of this expression to a locally bounded function of $(\tau,\lambda) \in (-1,1)\times \tilde{Z}$ for some domain $\tilde{Z}\subset\mathbb{C}$ with $0\in\tilde{Z}$ is precisely  \cite[Theorem 4]{viet_dang_fried_conjecture}.
\end{proof}

\appendix

\section{Construction of the heat kernel for a smooth family of elliptic operators}
\label{appendix_heat_kernel}

In this appendix we provide a self-contained construction of the heat kernel for a smooth family of positive definite elliptic operators $\tau\to D_\tau$, paying special attention to the smooth dependence on the parameter $\tau$.

The heat operator $e^{-tD_\tau}$ can be obtained from the spectral theory of the positive operator $D_\tau$. For concreteness, define the open set
$$\Lambda=\{\lambda\in\mathbb{C} \,|\, \Re(\lambda)+1<2|\Im(\lambda)|\}.$$
Note that $\Lambda$ is disjoint from the spectrum of $D_\tau$ and $|\lambda|$ is bounded away from zero on $\Lambda$.
Let $t\in\R \to \gamma(t)\in\mathbb{C}$ be a smooth contour contained in $\Lambda$ such that $\Re(\gamma(t))\to\infty$ as $t\to\pm\infty$ and for some $C>0$, we have
$$\Im(\gamma(t)) = \Re(\gamma(t)) \text{ for } t>C \quad\text{ and }\quad \Im(\gamma(t)) = -\Re(\gamma(t)) \text{ for } t<-C.$$
Then the holomorphic functional calculus gives
\begin{equation}
\label{contour_integral}
    e^{-tD_\tau} = \frac{1}{2\pi i}\int_\gamma e^{-t\lambda}(D_\tau-\lambda)^{-1}\,d\lambda
\end{equation}
and the heat kernel can be obtained as the Schwartz kernel of this operator. However, in order to control the $\tau$-dependence of the heat kernel, we will follow a different route. We first construct an approximate heat kernel, which depends in a straightforward way on the symbol of $D_\tau$. This is obtained by replacing the resolvent in \eqref{contour_integral} with a parametrix, i.e. an approximate inverse, for $(D_\tau-\lambda)$. The heat kernel can then be obtained from the approximate heat kernel by a converging Volterra series. Following this procedure, we can show that the heat kernel depends smoothly on $\tau$. We mainly follow \cite{gilkey} for the construction of an approximate heat kernel from a parametrix and \cite{berline_getzler} for the relation between approximate and actual heat kernel.

For the elliptic parametrix construction, we must introduce an appropriate symbol calculus.
\begin{defn}
\label{def_symbol_class}
    Let $U \subset \R^n$ be open and take $\Lambda\subset\mathbb{C}$ as above. Fix $m,r\in\N$. For $k\in\R$, we say that $a\in S^k_{\Lambda,m}(U;\mathbb{C}^{r\times r})$ is a $k$-th order symbol depending on the complex parameter $\lambda$ (with values in $\mathbb{C}^{r\times r}$) if the following hold:
    \begin{itemize}
        \item $a \in \C(U\times\R^n\times\Lambda;\mathbb{C}^{r\times r})$ and $a(x,\xi,\lambda)$ is an analytic function of $\lambda\in\Lambda$ for fixed $(x,\xi)$.
        \item For any multi-indices $\alpha,\beta,\gamma$ and any compact $K\subset U$, there is $C=C_{\alpha,\beta,\gamma,K}>0$ such that
        \begin{equation}
        \label{symbolic_estimates}
            \bigl|\partial_x^\alpha \partial_\xi^\beta \partial_\lambda^\gamma a(x,\xi,\lambda)\bigr| \leq C(1+|\xi|+|\lambda|^{\frac{1}{m}})^{k-|\beta|-m|\gamma|}, \quad \forall\,x\in K,\,\,\xi\in\R^n,\,\,\lambda\in\Lambda.
        \end{equation}
    \end{itemize}
    We equip $S^k_{\Lambda,m}(U;\mathbb{C}^{r\times r})$ with a topology using as seminorms the best constants in \eqref{symbolic_estimates}, that is for each $K\subset U$ compact and $j\in\N$, we define the seminorm
    \begin{equation}
    \label{symbol_seminorms}
        \|a\|_{k,K,j} = \sum_{|\alpha|+|\beta|+|\gamma|\leq j}\sup_{(x,\xi,\lambda)\in K\times\R^n\times\Lambda} (1+|\xi|+|\lambda|^{\frac{1}{m}})^{|\beta|+m|\gamma|-k}\bigl|\partial_x^\alpha \partial_\xi^\beta \partial_\lambda^\gamma a(x,\xi,\lambda)\bigr|.
    \end{equation}
    We say that $a\in S^k_{\Lambda,m}(U;\mathbb{C}^{r\times r})$ is homogeneous of order $k$ if
    $$a(x,t\xi,t^m\lambda) = t^k a(x,\xi,\lambda), \qquad \forall\,x\in U,\,\, \xi\in\R^n,\,\, \lambda\in\Lambda,\,\, t\geq 1.$$
\end{defn}

Note that at fixed $\lambda$ these parameter-dependent symbols just give rise to elements of the standard symbol calculus and can be quantized by the usual procedure. As shown in the following lemma, the symbols of Definition \ref{def_symbol_class} exhibit nice behavior when used in contour integrals of the form \eqref{contour_integral}.

\begin{lemma}
\label{lemma_kernel_local}
    Let $a \in S^{-m-N}_{\Lambda,m}(U;\mathbb{C}^{r\times r})$ be homogeneous of order $-m-N$ for some $N\geq 0$ and define
    $$K_a(t,x,y) = \frac{1}{(2\pi)^n}\frac{1}{2\pi i}\int_{\R^n}\int_\gamma e^{i(x-y)\xi-t\lambda}a(x,\xi,\lambda)\,d\lambda d\xi.$$
    Then the following holds:
    \begin{itemize}
        \item $K_a(t,x,y) \in \C(\R_+\times U\times U;\mathbb{C}^{r\times r})$,
        \item for any $\alpha,\beta,\gamma$ and $K\subset U$ compact, there exists $M\in\N$ such that
        \begin{equation}
        \label{heat_kernel_bound}
            |\partial_x^\alpha \partial_y^\beta \partial_t^\gamma K_a(t,x,y)| \leq C t^{\frac{N-n-|\alpha|-|\beta|-m|\gamma|}{m}}\|a\|_{-m-N,K,M} \quad \forall\, x\in K,\,\, y\in U,\,\, t\in (0,1),
        \end{equation}
        \item for any $l\in\N$, integration against $K_a$ defines an operator
        $$E_a(t):C_c^l(U;\mathbb{C}^r) \to C^l(U;\mathbb{C}^r), \quad E_a(t)f(x) = \int_U K_a(t,x,y)f(y)\,dy$$
        satisfying the following bound for any $K\subset U$ compact and some $M\in\N$:
        \begin{equation}
        \label{heat_op_bound}
            \|E_a(t)f\|_{C^l(K)} \leq C t^{\frac{N}{m}}\|a\|_{-m-N,K,M}\|f\|_{C^l(U)}, \quad \forall\, t\in (0,1).
        \end{equation}
    \end{itemize}
\end{lemma}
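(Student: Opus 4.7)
The strategy is to exploit the homogeneity of $a$ through the substitution $\xi=t^{-1/m}\eta$, $\lambda=t^{-1}\mu$, which makes the $t$-dependence of $K_a$ explicit; once this is done, the three claims reduce to routine bookkeeping. First I would verify that $K_a$ is a well-defined smooth function on $\R_+\times U\times U$: for each fixed $t>0$ the symbol bound $|a|\le C(1+|\xi|+|\lambda|^{1/m})^{-m-N}$, together with the decay $|e^{-t\lambda}|=e^{-t\Re\lambda}$ along $\gamma$ and the usual integration by parts in $\xi$ via $(1-\Delta_\xi)e^{i(x-y)\xi}=(1+|x-y|^2)e^{i(x-y)\xi}$, yields absolute convergence of the double integral and legitimates differentiation under the integral.

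Next, for $t\in(0,1]$ I would perform the rescaling. Since $t^{-1/m}\ge 1$, the homogeneity of $a$ gives $a(x,t^{-1/m}\eta,t^{-1}\mu)=t^{(m+N)/m}a(x,\eta,\mu)$; together with the Jacobian $t^{-n/m-1}$ and a deformation of the scaled contour $t\gamma$ back to $\gamma$ inside the domain of analyticity of $a$, this yields
\[
K_a(t,x,y)=t^{(N-n)/m}\,\widetilde K_a\!\bigl(t^{-1/m}(x-y),x\bigr), \quad \widetilde K_a(z,x)\doteq\frac{1}{(2\pi)^n}\frac{1}{2\pi i}\int_{\R^n}\int_\gamma e^{iz\cdot\eta-\mu}a(x,\eta,\mu)\,d\mu\,d\eta.
\]
A standard integration by parts, exploiting $z^\nu e^{iz\cdot\eta}=(-i\partial_\eta)^\nu e^{iz\cdot\eta}$ and the gain of one order per $\partial_\eta$ in the symbol estimate, then shows that $\widetilde K_a(z,x)$ and all its $(z,x)$-derivatives are Schwartz-class in $z$, uniformly for $x$ in compacta, with constants controlled by seminorms $\|a\|_{-m-N,K,M}$ whose order $M$ depends on the number of derivatives and on the decay rate required.

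From this point the estimates are mechanical. Applying $\partial_x^\alpha\partial_y^\beta\partial_t^\gamma$ to the rescaling formula, each $\partial_x$ or $\partial_y$ either differentiates the second slot of $\widetilde K_a$ (a $t$-independent contribution) or yields a factor $t^{-1/m}\partial_1\widetilde K_a$, whose growth in $z=t^{-1/m}(x-y)$ is absorbed by the Schwartz decay from the previous step; each $\partial_t$ similarly produces an extra factor of $t^{-1}$. Summing the exponents reproduces the factor $t^{(N-n-|\alpha|-|\beta|-m|\gamma|)/m}$ of \eqref{heat_kernel_bound}, with constant bounded by a seminorm $\|a\|_{-m-N,K,M}$ of order $M$ depending on $|\alpha|+|\beta|+|\gamma|$. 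For the operator bound \eqref{heat_op_bound}, I would change variables $y=x-t^{1/m}z$ in $E_a(t)f(x)=\int K_a(t,x,y)f(y)\,dy$ to obtain $E_a(t)f(x)=t^{N/m}\int_{\R^n}\widetilde K_a(z,x)f(x-t^{1/m}z)\,dz$, after which distributing derivatives onto $\widetilde K_a$ (controlled by Schwartz decay) or onto $f$ (bounded by $\|f\|_{C^l(U)}$) delivers the claim.

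The subtle point, and the main obstacle, is the contour deformation $t\gamma\rightsquigarrow\gamma$ in the rescaling step: the set $\Lambda=\{\Re\lambda+1<2|\Im\lambda|\}$ is not conic (because of the additive $+1$), so $t\gamma\not\subset\Lambda$ in general for $t<1$, and a naive homotopy might cross singularities of $a$. I would circumvent this either by choosing $\gamma$ so that the family $\{s\gamma\}_{s\in(0,1]}$ remains in the domain of analyticity of $a$ (possible because $\Lambda$ contains, for $R$ large, the intersection with $\{|\lambda|\ge R\}$ of any fixed conic neighborhood of $i\R$), or, more robustly, by avoiding the deformation entirely and running all subsequent estimates directly on integrals over $t\gamma$, using only $|e^{-\mu}|=e^{-\Re\mu}$ and the symbol estimates, which remain uniform for $t\in(0,1]$.
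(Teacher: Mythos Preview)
Your proof is correct and follows essentially the same route as the paper: both hinge on the rescaling $\xi\mapsto t^{-1/m}\xi$, $\lambda\mapsto t^{-1}\lambda$ together with the homogeneity of $a$ to extract the power of $t$, and then on a change of variables in $y$ for the operator bound. The only notable technical difference is that the paper obtains absolute convergence of the $(\xi,\lambda)$-integral by integration by parts in $\lambda$ (using $e^{-t\lambda}=t^{-M}(-\partial_\lambda)^M e^{-t\lambda}$, which gains $m$ orders per derivative from the symbol estimate), whereas you use integration by parts in $\xi$; both work. Your explicit discussion of the contour deformation $t\gamma\rightsquigarrow\gamma$ is more careful than the paper's, which simply says ``shifting the contour $\gamma$ within $\Lambda$'' without addressing why this is legitimate given that $\Lambda$ is not conic.
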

\begin{proof}
    For $t>0$ we can write $e^{-t\lambda} = t^{-M}(-\partial_\lambda)^Me^{-t\lambda}$. Integration by parts then gives
    \begin{equation*}
    \begin{split}
        \partial_x^\alpha \partial_y^\beta \partial_t^\gamma K_a(t,x,y) &= \frac{1}{(2\pi)^n}\frac{1}{2\pi i}\int_{\R^n}\int_\gamma e^{i(x-y)\xi-t\lambda}(\partial_x+i\xi)^\alpha(-i\xi)^\beta(-\lambda)^\gamma a(x,\xi,\lambda)\,d\lambda d\xi \\
        &= \frac{1}{(2\pi)^n}\frac{1}{2\pi i}t^{-M}\int_{\R^n}\int_\gamma e^{i(x-y)\xi-t\lambda}\partial_\lambda^M\bigl((\partial_x+i\xi)^\alpha(-i\xi)^\beta(-\lambda)^\gamma a(x,\xi,\lambda)\bigr)\,d\lambda d\xi.
    \end{split}
    \end{equation*}
    The integrand is bounded by $Ce^{-t\Re(\lambda)}(1+|\xi|+|\lambda|^{\frac{1}{m}}\bigr)^{-m-N+|\alpha|+|\beta|+m|\gamma|-M}$ for $x$ in a compact subset of $U$, by the symbolic estimates for $a$. So choosing $M$ large enough, we see that the integral converges absolutely. Hence, $K_a$ defines a smooth function for $t>0$. 
    
    For the small $t$ asymptotics, we use the homogeneity of $a$. Making the change of variables $\xi\to t^{-\frac{1}{m}}\xi$, $\lambda \to t^{-1}\lambda$ and shifting the contour $\gamma$ within $\Lambda$, we find for $t\in(0,1)$ and $x\in K$:
    \begin{equation*}
    \begin{split}
        |K_a(t,x,y)| &= t^{-\frac{n}{m}-1}\Bigl|\frac{1}{(2\pi)^n}\frac{1}{2\pi i}\int_{\R^n}\int_\gamma e^{it^{-\frac{1}{m}}(x-y)\xi-\lambda}a(x,t^{-\frac{1}{m}}\xi,t^{-1}\lambda)\,d\lambda d\xi\Bigr| \\
        &= t^{\frac{N-n}{m}}\Bigl|\frac{1}{(2\pi)^n}\frac{1}{2\pi i}\int_{\R^n}\int_\gamma e^{it^{-\frac{1}{m}}(x-y)\xi-\lambda}a(x,\xi,\lambda)\,d\lambda d\xi\Bigr| \\
        &= t^{\frac{N-n}{m}}\Bigl|\frac{1}{(2\pi)^n}\frac{1}{2\pi i}\int_{\R^n}\int_\gamma e^{it^{-\frac{1}{m}}(x-y)\xi-\lambda}\partial_\lambda^M a(x,\xi,\lambda)\,d\lambda d\xi\Bigr| \\
        &\leq C t^{\frac{N-n}{m}}\|a\|_{-m-N,K,M}\int_{\R^n}\int_\gamma e^{-\Re(\lambda)}(1+|\xi|+|\lambda|)^{-m-N-M}\,d\lambda d\xi,
    \end{split}
    \end{equation*}
    which gives the desired bound when $M$ is chosen large enough for the integral over $\xi$ to converge. The bound in \eqref{heat_kernel_bound} follows similarly. Indeed, taking derivatives in $x,y,t$ results in extra factors of $\xi$ and $\lambda$ in the integrand, which upon changing variables as above leads to extra powers of $t^{-\frac{1}{m}}$.

    Taking $f\in C^l_c(U,\mathbb{C}^r)$ and again using the homogeneity of $a$ and a change of variables, we find for $x\in K$, $t\in(0,1)$:
    \begin{equation*}
    \begin{split}
        |E_a(t)f(x)| &= \Bigl|\frac{1}{(2\pi)^n}\frac{1}{2\pi i}\int_U\int_{\R^n}\int_\gamma e^{i(x-y)\xi-t\lambda}a(x,\xi,\lambda)f(y)\,d\lambda d\xi dy\Bigr| \\
        &= t^{-\frac{n}{m}-1}\Bigl|\frac{1}{(2\pi)^n}\frac{1}{2\pi i}\int_U\int_{\R^n}\int_\gamma e^{it^{-\frac{1}{m}}(x-y)\xi-\lambda}a(x,t^{-\frac{1}{m}}\xi,t^{-1}\lambda)f(y)\,d\lambda d\xi dy\Bigr| \\
        &= t^{\frac{N}{m}}\Bigl|\frac{1}{(2\pi)^n}\frac{1}{2\pi i}\int_U\int_{\R^n}\int_\gamma e^{i(t^{-\frac{1}{m}}x-y)\xi-\lambda}a(x,\xi,\lambda)f(t^{\frac{1}{m}}y)\,d\lambda d\xi dy\Bigr| \\
        &\leq C t^{\frac{N}{m}} \|a\|_{-m-N,K,M}\sup_{y\in U}|f(y)|\int_{\R^n}\int_\gamma e^{-\Re(\lambda)}(1+|\xi|+|\lambda|)^{-m-N-M}\,d\lambda d\xi.
    \end{split}
    \end{equation*}
    Derivatives with respect to $x$ can be handled by noting that
    $$\partial_x^\alpha\int_U e^{i(x-y)\xi}a(x,\xi,\lambda)f(y)\,dy = \sum_{|\beta|+|\gamma|=|\alpha|}\frac{\alpha!}{\beta!\gamma!}\int_U e^{i(x-y)\xi}\partial_x^\beta a(x,\xi,\lambda)\partial_y^\gamma f(y)\,dy$$
    where we used that $\partial_x^\alpha e^{i(x-y)\xi} = (-\partial_y)^\alpha e^{i(x-y)\xi}$ and partial integration with respect to $y$.
\end{proof}

We will now construct an approximate heat kernel for $D_\tau$ by using a parametrix for $(D_\tau-\lambda)$ in the contour integral of \eqref{contour_integral}. The parametrix is obtained as the quantization of a finite sum of homogeneous symbols in the symbol class of Definition \ref{def_symbol_class}. For the sake of simplicity, in the following we will denote by $\V=\wedge^\bullet T^*M\otimes E$ the corresponding vector bundle.
\begin{lemma}
\label{lemma_approx_heat_kernel}
    Let $\tau\in(-1,1)\to D_\tau$ be a smooth family of positive definite elliptic operators, as in Proposition \ref{invariance_elliptic}. Then for every $N\in\N$ there is a smooth family of approximate heat kernels
    $$K_N(\tau,t,x,y) \in \C\bigl((-1,1)\times\R_+\times M\times M;\V\boxtimes (\V^*\otimes \wedge^nT^*M)\bigr)$$
    satisfying
    \begin{itemize}
        \item for any $T>0$, $I\subset (-1,1)$ compact and $i,l\in\N$ the operator defined by the kernel $\partial_\tau^iK_N$:
        $$\partial_\tau^iE_N(\tau,t):C^l(M,\V)\to C^l(M,\V), \quad \partial_\tau^iE_N(\tau,t)f(x) = \int_M \partial_\tau^iK_N(\tau,t,x,y)f(y)$$
        is bounded (w.r.t.\ the $C^l$-norm) uniformly for $\tau\in I$ and $t\in(0,T]$,
        \item for every $\tau$, we have $E_N(\tau,t)f \to f \in C^l(M,\V)$ as $t\to 0$,
        \item denoting $$S_N(\tau,t,x,y) = (\partial_t+D_\tau)K_N(\tau,t,x,y),$$ we have for any $T>0$, $I\subset (-1,1)$ compact:
        \begin{equation}
        \label{remainder_bound}
            \|\partial_\tau^i\partial_t^jS_N(\tau,t,x,y)\|_{C^l(M\times M)} \leq C t^{\frac{N-n-l-jm}{m}}
        \end{equation}
        uniformly for $\tau\in I$ and $t\in(0,T]$.
    \end{itemize}
\end{lemma}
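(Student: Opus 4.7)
The plan is to follow the classical parametrix construction for the resolvent of $(D_\tau-\lambda)$ and assemble $K_N$ via the contour integral representation already used above, while tracking smooth dependence on $\tau$ through every step. In a finite atlas $\{U_\alpha\}$ of $M$ over which $\V$ trivializes, I write the full matrix symbol of $D_\tau$ on $U_\alpha$ as $p_\tau(x,\xi)=\sum_{j=0}^{m}p_{\tau,j}(x,\xi)$, with $p_{\tau,m}$ the positive definite elliptic principal symbol. The choice of $\Lambda$ in Definition \ref{def_symbol_class} guarantees that $p_{\tau,m}(x,\xi)-\lambda$ is invertible for $\lambda\in\Lambda$, $(x,\xi)\neq 0$, and that $q_{\tau,0}(x,\xi,\lambda)\doteq(p_{\tau,m}(x,\xi)-\lambda)^{-1}$ lies in $S^{-m}_{\Lambda,m}(U_\alpha;\mathbb{C}^{r\times r})$, smoothly in $\tau$, with every $\tau$-derivative satisfying the symbolic bounds \eqref{symbolic_estimates} uniformly on any compact $I\subset(-1,1)$. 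I then construct homogeneous $q_{\tau,k}\in S^{-m-k}_{\Lambda,m}$ inductively by solving the Kohn--Nirenberg symbolic equation $(p_\tau-\lambda)\# q^N_\tau \sim 1$ with $q^N_\tau\doteq\sum_{k=0}^{N-1}q_{\tau,k}$. Because each inductive step consists of algebraic operations on symbols together with Leibniz differentiation in $(x,\xi)$, both smooth $\tau$-dependence and the locally uniform symbolic estimates propagate from $q_{\tau,0}$.

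Next I define the local kernels
\[
K_N^\alpha(\tau,t,x,y)=\frac{1}{(2\pi)^n}\frac{1}{2\pi i}\int_{\R^n}\int_\gamma e^{i(x-y)\xi-t\lambda}\,q^N_\tau(x,\xi,\lambda)\,d\lambda\,d\xi,
\]
and glue them via a subordinate partition of unity, inserting a cutoff near the diagonal, to obtain a global $K_N\in\C((-1,1)\times\R_+\times M\times M;\V\boxtimes(\V^*\otimes\wedge^nT^*M))$. Smoothness of $K_N$ for $t>0$ and the $\tau$-uniform $C^l$-boundedness of $\partial_\tau^i E_N(\tau,t)$ follow by applying Lemma \ref{lemma_kernel_local} (and its bound \eqref{heat_op_bound}) to each $\partial_\tau^i q_{\tau,k}\in S^{-m-k}_{\Lambda,m}$, which is a symbol of the same order thanks to the uniform estimates from the first step.

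For the approximate-identity property, the leading contribution from $q_{\tau,0}$ reduces, after a contour deformation inside $\Lambda$, to the oscillatory integral $(2\pi)^{-n}\!\int e^{i(x-y)\xi}e^{-tp_{\tau,m}(x,\xi)}\,d\xi$, which acts on $f\in C^l(M,\V)$ as a standard Gaussian-type approximate identity and converges to $f$ in $C^l$ as $t\to 0$ by the usual rescaling argument. The remaining terms coming from $q_{\tau,k}$, $k\geq 1$, carry a factor $t^{k/m}$ by the same homogeneity change of variables as in the proof of Lemma \ref{lemma_kernel_local} and vanish in the limit. For the remainder bound, I commute $(\partial_t+D_\tau)$ through the contour integral; the amplitude is replaced by $r^N_\tau\doteq(p_\tau-\lambda)\# q^N_\tau -1$, which by the parametrix construction is (asymptotically) a sum of homogeneous symbols of orders $\leq -m-N$ with smooth and $\tau$-locally uniform estimates. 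Applying Lemma \ref{lemma_kernel_local} to $r^N_\tau$ yields \eqref{remainder_bound} locally; the commutator contributions coming from differentiating the partition-of-unity cutoffs are supported away from the diagonal, so they contribute $O(t^\infty)$ by non-stationary phase.

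The main obstacle is bookkeeping rather than a single hard estimate: I must verify that each algebraic operation in the parameter-dependent symbol calculus (inversion of $p_{\tau,m}-\lambda$, Kohn--Nirenberg composition, $\tau$-differentiation) acts continuously in $\tau$ on the symbol spaces $S^k_{\Lambda,m}$ with respect to the seminorms \eqref{symbol_seminorms}, uniformly for $\tau$ in compact subsets of $(-1,1)$. Once these continuity statements are in place, the classical fixed-$\tau$ construction of Gilkey--Seeley type lifts verbatim to the family version claimed by the lemma.
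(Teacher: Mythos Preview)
Your proposal is correct and follows essentially the same Gilkey--Seeley parametrix construction as the paper: local symbolic inversion $q_{\tau,0}=(p_{\tau,m}-\lambda)^{-1}$ with an inductive determination of the lower-order $q_{\tau,k}$, a contour-integral definition of the local $K_N$, patching via a partition of unity, and handling the off-diagonal commutator terms by integration by parts in $\xi$ (what you call non-stationary phase). The only cosmetic discrepancy is the number of terms in $q^N_\tau$: to obtain a remainder symbol of order $\le -m-N$ and hence the claimed power $t^{(N-n-l-jm)/m}$ via Lemma~\ref{lemma_kernel_local}, you need $m+N$ homogeneous terms rather than $N$, but this is only a relabelling of $N$.
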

\begin{proof}
    We will first work locally and construct an approximate heat kernel in local coordinates. We then patch together the local constructions using a partition of unity. Thus, let $U\subset\R^n$ be the image of a coordinate chart over which the bundle $\V$ is trivial. In these local coordinates we can write
    $$D_\tau = \sum_{|\alpha|\leq m} a_\alpha(\tau,t)\partial_x^\alpha , \quad\text{for some } a_\alpha(\tau,x) \in \C((-1,1)\times U;\mathbb{C}^{r\times r}),$$
    where $r$ is the rank of the bundle $\V$. The full symbol of $D_\tau$ is given by
    $$d(\tau,x,\xi) = \sum_{j\leq m} d_j(\tau,x,\xi), \quad\text{where } d_j(\tau,x,\xi) = \sum_{|\alpha|=m-j} a_\alpha(\tau,t)(i\xi)^\alpha,$$
    is a homogeneous polynomial of degree $m-j$ in $\xi.$ Note that $d_0(\tau,x,\xi)$ is the principle symbol of $D_\tau$.
    We will apply the elliptic parametrix construction and obtain a local version of the approximate heat kernel as
    \begin{equation}
    \label{approx_heat_kernel}
        K_N(\tau,t,x,y) = \frac{1}{(2\pi)^n}\frac{1}{2\pi i}\int_{\R^n}\int_\gamma e^{i(x-y)\xi-t\lambda}q^N(\tau,x,\xi,\lambda)\,d\lambda d\xi,
    \end{equation}
    where for each $\tau$, $q^N(\tau,x,\xi,\lambda) \in S^{-m}_{\Lambda,m}(U;\mathbb{C}^{r\times r})$ will be the symbol of a parametrix for $(D_\tau-\lambda)$, modulo error terms of order $-m-N$.
    Note that
    $$(\partial_t + D_\tau) K_N(\tau,t,x,y) = \frac{1}{(2\pi)^n}\frac{1}{2\pi i}\int_{\R^n}\int_\gamma e^{i(x-y)\xi-t\lambda}\Bigl(\sum_{|\alpha|\leq m}\frac{(-i)^{|\alpha|}}{\alpha!}\partial_\xi^\alpha \bigl(d(\tau,x,\xi)-\lambda\bigr) \partial_x^\alpha q^N(\tau,x,\xi,\lambda)\Bigr)\,d\lambda d\xi,$$
    as follows from the composition formula for pseudodifferential operators or a direct computation. We make the ansatz
    $$q^N(\tau,x,\xi,\lambda) = \sum_{0\leq k<m+N}q_k(\tau,x,\xi,\lambda), \quad\text{with } q_k \in S^{-m-k}_{\Lambda,m}(U;\mathbb{C}^{r\times r})$$
    and attempt to solve for the $q_k$ so that
    \begin{equation}
    \label{parametrix_formula}
        \sum_{|\alpha|\leq m}\frac{(-i)^{|\alpha|}}{\alpha!}\partial_\xi^\alpha \bigl(d(\tau,x,\xi)-\lambda\bigr) \partial_x^\alpha q^N(\tau,x,\xi,\lambda) = 1 + r^N(\tau,x,\xi,\lambda), \quad\text{with } r^N \in S^{-m-N}_{\Lambda,m}(U;\mathbb{C}^{r\times r}).
    \end{equation}
    Defining $\tilde{d_0}=d_0-\lambda$ and $\tilde{d_j}=d_j,$ for $1\leq j\leq m$, so that $\tilde{d_j} \in S^{m-j}_{\Lambda,m}(U;\mathbb{C}^{r\times r})$ for each $j$, \eqref{parametrix_formula} becomes
    \begin{equation}
    \label{parametrix_by_order}
    \begin{split}
        \sum_{\substack{k<m+N \\ j\leq m \\ |\alpha|\leq m}} \frac{(-i)^{|\alpha|}}{\alpha!}\partial_\xi^\alpha \tilde{d}_j \partial_x^\alpha q_k = \sum_{M}\,\sum_{\substack{j+k+|\alpha|=M \\ 0\leq k < m+N \\ 0\leq j,|\alpha| \leq m}}\frac{(-i)^{|\alpha|}}{\alpha!}\partial_\xi^\alpha \tilde{d}_j \partial_x^\alpha q_k = 1 \quad\text{modulo } S^{-m-N}_{\Lambda,m}(U;\mathbb{C}^{r\times r}).
    \end{split}
    \end{equation}
    Note that $\partial_\xi^\alpha \tilde{d}_j \partial_x^\alpha q_k$ is a symbol of order $-M=-j-k-|\alpha|$. Thus, at order zero \eqref{parametrix_formula} is satisfied by setting $q_0=\tilde{d_0}^{-1} = (d_0-\lambda)^{-1}$.
    Since $D_\tau$ is positive definite, the principle symbol $d_0(\tau,x,\xi)$ is a positive definite matrix for all $\xi\neq 0$. Thus, we indeed have
    $$q_0(\tau,x,\xi,\lambda) = (d_0(\tau,x,\xi) - \lambda)^{-1} \in S^{-m}_{\Lambda,m}(U;\mathbb{C}^{r\times r}).$$
    Moreover, $q_0$ is homogeneous of order $-m$ and from the smooth $\tau$ dependence of $d_0$, it follows that $q_0$ depends smoothly on $\tau$ with respect to the seminorms defined in \eqref{symbol_seminorms}.
    We can now solve for the $q_k$ inductively. At order $-m-N<-M<0$ the finite sum in \eqref{parametrix_by_order} has only one term involving $q_M$ and we find the solution
    $$q_M(\tau,x,\xi,\lambda) = - (d_0(\tau,x,\xi) - \lambda)^{-1}\sum_{\substack{j+k+|\alpha|=M \\ 0\leq k < M \\ 0\leq j,|\alpha| \leq m}}\frac{(-i)^{|\alpha|}}{\alpha!}\partial_\xi^\alpha d_j(\tau,x,\xi) \partial_x^\alpha q_k(\tau,x,\xi,\lambda),$$
    which is a homogeneous symbol in $S^{-m-M}_{\Lambda,m}(U;\mathbb{C}^{r\times r})$. Note that each $q_M$ is the finite sum of terms involving derivatives of $(d_0-\lambda)^{-1}$ and the $d_j$, and hence depends continuously on $\tau$ in the symbol topology. Setting $q^N = \sum_{M<m+N}q_M$, equation \eqref{parametrix_formula} is satisfied with
    $$r^N(\tau,x,\xi,\lambda) = \sum_{\substack{j+k+|\alpha|\geq m+N \\ 0\leq k < m+N \\ 0\leq j,|\alpha| \leq m}}\frac{(-i)^{|\alpha|}}{\alpha!}\partial_\xi^\alpha d_j(\tau,x,\xi) \partial_x^\alpha q_k(\tau,x,\xi,\lambda)$$
    a finite sum of homogeneous symbols of order $\leq -m-N$ depending smoothly on $\tau$ in the respective symbol topology.

    The local versions of the statements in Lemma \ref{lemma_approx_heat_kernel} now follow by applying Lemma \ref{lemma_kernel_local} to the kernel in \eqref{approx_heat_kernel} and using the smoothness of $\tau \to q^N(\tau,x,\xi,\lambda)$ with respect to the symbol seminorms. The first point in Lemma \ref{lemma_kernel_local} shows that $K_N(\tau,t,x,y) \in \C((-1,1)\times\R_+\times U\times U)$. The third point shows that for any $i,l\in\N$ and $K\subset U$ compact, we have
    $$\Bigl\|\int_U \partial_\tau^i K_N(\tau,t,x,y)f(y)\,dy\Bigr\|_{C^l(K)} \leq C\|f\|_{C^l(U)} \quad \forall\, f\in C^l_c(U)$$
    uniformly for $\tau\in I$ and $t\in(0,T]$, where $I\subset(-1,1)$ compact and $T>0$ are arbitrary. Note that uniformity for $t\in [1,T]$ follows simply from the smoothness of $K_N$. Moreover, in the limit $t\to 0$, we have for each $f\in C^l_c(U)$:
    \begin{equation*}
    \begin{split}
        \int_U K_N(\tau,t,x,y)f(y)\,dy &= \frac{1}{(2\pi)^n}\frac{1}{2\pi i}\int_U\int_{\R^n}\int_\gamma e^{i(x-y)\xi-t\lambda}q_0(\tau,x,\xi,\lambda)f(y)\,d\lambda d\xi dy + \mathcal{O}(t)\\
        &= \frac{1}{(2\pi)^n}\frac{1}{2\pi i}\int_{\R^n}\int_U\int_\gamma e^{i(x-y)\xi-t\lambda}\bigl(d_0(\tau,x,\xi)-\lambda\bigr)^{-1}f(y)\,d\lambda d\xi dy + \mathcal{O}(t) \\
        &= \frac{1}{(2\pi)^n}\int_{\R^n}\int_U e^{i(x-y)\xi-td_0(\tau,x,\xi)}f(y)\,d\xi dy + \mathcal{O}(t) \xrightarrow{t\to 0} f(x),
    \end{split}
    \end{equation*}
    where we used Cauchy's integral theorem and the fact that the eigenvalues of the matrix $d_0(\tau,x,\xi)$ lie on the positive real axis. Finally, we have
    $$S_N(\tau,t,x,y) = (\partial_t+D_\tau)K_N(\tau,t,x,y) = \frac{1}{(2\pi)^n}\frac{1}{2\pi i}\int_{\R^n}\int_\gamma e^{i(x-y)\xi-t\lambda}r^N(\tau,x,\xi,\lambda)\,d\lambda d\xi,$$
    where $r_N \in S^{-m-N}_{\Lambda,m}(U;\mathbb{C}^{r\times r})$ depends smoothly on $\tau$ with respect to the symbol seminorms. Thus, the second point in Lemma \ref{lemma_kernel_local} shows that for any $i,j,l\in\N$ and $K\subset U$ compact, we have
    \begin{equation}
    \label{loc_remainder_est}
        \|\partial_\tau^i\partial_t^jS_N(\tau,t,x,y)\|_{C^l(K\times K)} \leq C t^{\frac{N-n-l-jm}{m}}
    \end{equation}
    locally uniformly in $\tau$ and $t$.

    We can now patch together the local construction to a define an approximate heat kernel globally on $M$. Cover $M$ by finitely many coordinate charts $U_\nu$ and let $\chi_\nu$ be a partition of unity subordinate to the cover. Let further $\tilde{\chi}_\nu \in \C_c(U_\nu)$ satisfy $\tilde{\chi}_\nu = 1$ on $\supp(\chi_\nu)$. We set
    $$K_N(\tau,t,x,y) = \sum_\nu \tilde{\chi}_\nu(x)K_N^{(\nu)}(\tau,t,x,y)\chi_\nu(y),$$
    where $K_N^{(\nu)}$ is the pullback to $M$ of the local approximate heat kernel constructed above. Note that $K_N$ depends smoothly on $(\tau,t,x,y)$ and if $(x,y)$ lies in the support of $K_N$ then $x,y$ must be contained in a coordinate chart $U_\nu$ for some $\nu$. The first two properties of Lemma \ref{lemma_approx_heat_kernel} now follow immediately from the local versions. For the third property, note that
    $$(\partial_t+D_\tau)K_N(\tau,t,x,y) = \sum_\nu \tilde{\chi}_\nu(x)S_N^{(\nu)}(\tau,t,x,y)\chi_\nu(y) + \sum_\nu [D_\tau, \tilde{\chi}_\nu(x)]K_N^{(\nu)}(\tau,t,x,y)\chi_\nu(y).$$
    The first term can be handled by the local estimates in \eqref{loc_remainder_est}. For the second term, notice that $[D_\tau, \tilde{\chi}_\nu(x)]$ is a differential operator supported away from $\supp(\chi_\nu)$. Thus, working in local coordinates we see that $[D_\tau, \tilde{\chi}_\nu(x)]e^{i(x-y)}q_\nu^N(\tau,x,\xi,\lambda)\chi_\nu(y)$ is supported away from $x=y$. Using $e^{i(x-y)\xi} = (x-y)^{-\alpha}(-i\partial_\xi)^\alpha e^{i(x-y\xi)}$ for $x\neq y$ and performing partial integration with respect to $\xi$, we find that
    \begin{equation*}
        [D_\tau, \tilde{\chi}_\nu(x)]K_N^{(\nu)}(\tau,t,x,y)\chi_\nu(y) = \frac{1}{(2\pi)^n}\frac{1}{2\pi i}\int_{\R^n}\int_\gamma e^{i(x-y)\xi-t\lambda}\,\tilde{r}_\nu^N(\tau,x,\xi,\lambda)\,d\lambda d\xi\,\chi_\nu(y),
    \end{equation*}
    where $\tilde{r}^N_\nu \in S^{-m-N}_{\Lambda,m}(U;\mathbb{C}^{r\times r})$ has compact $x$-support and depends smoothly on $\tau$ in the symbol topology. The estimate in \eqref{remainder_bound} now follows from the local estimates \eqref{heat_kernel_bound} of Lemma \ref{lemma_kernel_local}.
\end{proof}

We can now construct the actual heat kernel $K$ from the approximate heat kernel obtained in Lemma \ref{lemma_approx_heat_kernel}.

\begin{proof}[Proof of Lemma \ref{lemma_smooth_heat_kernel}]
    We will show that the heat kernel $K(\tau,t,x,y)$, i.e. the Schwartz kernel of $e^{-tD_\tau}$, depends smoothly on $\tau$. To this end, we follow \cite[Section 2.4]{berline_getzler} and obtain $K$ from the approximate heat kernel $K_N$ and remainder $S_N$ of Lemma \ref{lemma_approx_heat_kernel} by a Volterra series:
    \begin{equation}
    \label{volterra_series}
        K = \sum_{k=0}^\infty (-1)^k K_N*(S_N)^{*k},
    \end{equation}
    where for $A,B \in \C\bigl(\R_+\times M\times M;\V\boxtimes (\V^*\otimes \wedge^nT^*M)\bigr)$, we define the convolution product by
    $$A*B(t,x,y) = \int_0^t\int_M A(t-u,x,z)B(u,z,y)\,du$$
    whenever the integrals over $z$ and $u$ converge. 
    Notice that
    \begin{equation*}
        (S_N)^{*(k+1)}(\tau,t,x,y) = \int_{t\Delta_k}\int_{M^{k}}S_N(\tau,t-t_{k},x,z_{k})S_N(\tau,t_{k}-t_{k-1},z_{k},z_{k-1})\cdots S_N(\tau,t_{1},z_{1},y),
    \end{equation*}
    where the first integral is over the rescaled $k$-simplex
    $$t\Delta_k = \{0\leq t_1 \leq t_2 \leq \cdots \leq t_k \leq t\} \subset \R^k,$$
    Fix $J\in\N$, $T>0$ and $I\subset (-1,1)$ compact. Choosing $N$ large enough, Lemma \ref{lemma_approx_heat_kernel} shows that $S_N$ and its derivatives in $(\tau,t,x,y)$ up to order $J$ extend continuously to $t=0$ for $\tau\in I$. Thus, the integral defining $(S_N)^{*(k+1)}$ converges to an element of $C^J(I\times[0,T]\times M\times M)$. Moreover, \eqref{remainder_bound} leads to the estimate
    \begin{equation}
    \label{convolution_estimate}
        \bigl\|\partial_\tau^i\partial_t^j(S_N)^{*(k+1)}(\tau,t,x,y)\bigr\|_{C^l(M\times M)} \leq C^{(k+1)}\frac{t^k}{k!}\,t^{(k+1)\frac{N-n}{m}-\frac{l}{m}-j}
    \end{equation}
    uniformly for $\tau\in I$, $t\in [0,T]$, where we used that the Lebesgue measure of the rescaled $k$-simplex is $|t\Delta_k| = \frac{t^k}{k!}$. By Lemma \ref{lemma_approx_heat_kernel}, integration against $\partial_\tau^iK_N(\tau,t,x,y)$ defines a uniformly bounded operator on $C^l(M)$ for each $i$. Since $\partial_tK_N = S_N - D_\tau K_N$ by definition, we see that the kernel $\partial_\tau^i\partial_t^jK_N(\tau,t,x,y)$ defines a uniformly bounded operator from $C^{l+jm}(M)$ to $C^l(M)$. Thus,
    $$\Bigl\|\int_M \partial_\tau^i\partial_t^jK_N(\tau,t-u,x,z)(S_N)^{*k}(\tau,u,z,y)\Bigr\|_{C^l(M\times M)} \leq C_0\Bigl\|(S_N)^{*k}(\tau,u,z,y)\Bigr\|_{C^{l+jm}(M\times M)}$$
    uniformly for $0\leq u\leq t\leq T$ and $\tau\in I$. Using the estimate for $(S_N)^{*k}$ and integrating over $u$, we find for every $k\geq 1$:
    $$\bigl\|\partial_\tau^i\partial_t^j\bigl(K_N*(S_N)^{*k}\bigr)(\tau,t,x,y)\bigr\|_{C^l(M\times M)} \leq C_0C^k\frac{t^k}{(k-1)!}t^{\frac{k(N-n)}{m}-\frac{l}{m}-j}$$
    uniformly for $\tau\in I$, $t\in [0,T]$. Choosing $N$ large enough, we see that the sum in \eqref{volterra_series} converges in $C^J(I\times[0,T]\times M\times M)$. 
    Furthermore, using $(\partial_t-D_\tau)K_N=S_N$ and the definition of the convolution product, we see that
    $$(\partial_t-D_\tau)K_N*(S_N)^{*k} = (S_N)^{*(k+1)} + (S_N)^{*k}, \quad\text{for } k\geq 1.$$
    Thus, upon application of the operator $(\partial_t-D_\tau)$ the sum in \eqref{volterra_series} telescopes. We find that the left hand side of \eqref{volterra_series} satisfies
    $$(\partial_t-D_\tau)K = 0, \quad\text{and}\quad \int_M K(\tau,t,x,y)f(y) \xrightarrow{t\to 0} f(x) \quad \forall\, f\in\Omega^\bullet(M,E),$$
    where the second property follows thanks to the second point in Lemma \ref{lemma_approx_heat_kernel} and the estimates on $K_N*(S_N)^{*k}$ for $k\geq 1$. By uniqueness of the heat kernel for an elliptic operator on a compact manifold, see for instance \cite[Proposition 2.17]{berline_getzler}, the left hand side of \eqref{volterra_series} is independent of $N$ and constitutes the Schwartz kernel of $e^{-tD_\tau}$. Since $I,T$ and $J$ were arbitrary, we see that 
    \begin{equation}
        K \in \C\bigl((-1,1)\times \R_+\times M\times M;\V\boxtimes(\V^*\otimes\wedge^nT^*M)\bigr).
    \end{equation}
\end{proof}

\printbibliography

\end{document}